\pdfoutput=1
\documentclass[10pt,twoside]{article}

\usepackage{8OSME-style}

\usepackage{tabularx}
\usepackage{overpic}
\usepackage{tikz}
\usepackage{textcomp, gensymb}

\newtheorem{lemma}{Lemma}
\newtheorem{proposition}{Proposition}
\theoremstyle{definition}
\newtheorem{definition}{Definition}
\newtheorem{algorithm}{Algorithm}

\numberwithin{equation}{section}

\theoremstyle{remark}
\newtheorem{remark}{Remark}
\theoremstyle{plain}

\usepackage{overharpoon}

\DeclareMathAlphabet{\mathcal}{OMS}{cmsy}{m}{n}

\newcommand{\MA}{\mathfrak{M}}
\newcommand{\FR}{\mathfrak{F}}
\newcommand{\dist}[1]{\lvert\overline{#1}\rvert}

\setlist[enumerate,1]{label=\textup{(\arabic*)}, ref=(\arabic*)}

\title{Triangles and quadrilaterals suffice}
\authorlist[Tsai]{Mu-Tsun Tsai}
\affiliations{Mu-Tsun Tsai \at New Taipei City, TAIWAN, \email{don.m.t.tsai@gmail.com}}

\makeatletter
  \let\runtitle\@title
  \let\runauthor\shortauthor
\makeatother

\begin{document}

\maketitle

\begin{abstract}
	We prove a conjecture proposed by Robert J. Lang over 20 years ago,
	stating that in a circle-packed origami design, any axial polygon with 5 or more sides can be reduced by properly adding a new circle.
	The proof is based on the structure of the medial axis of the region enclosed by the circles,
	and yields a concrete algorithm for finding the added circle.
	The result extends to configurations involving rivers.
\end{abstract}


\section{Introduction}
\label{sec:introduction}

In computational origami design, the \emph{circle/river method} provides a systematic approach to constructing uniaxial bases with prescribed flap lengths by representing each flap as a circle whose radius is proportional to the desired flap length, then packing these circles such that their centers lie in the given square and the circles do not overlap. A key step in this method involves filling each \emph{axial polygon}---formed by connecting the centers of mutually tangent circles---with the crease pattern of a corresponding molecule. For triangular and quadrilateral axial polygons, simple geometric constructions are known for their associated molecules. However, for polygons of order five or higher, no comparable geometric construction exists; instead, one must resort to numerical algorithms to compute the patterns of the molecules. This computational dependency poses a barrier to purely geometric origami design.

Lang observed \cite[pp. 364--366]{Lang2003}\footnote{See also \cite[pp. 379--381]{Lang2011}.} that this difficulty could potentially be circumvented through a reduction process. By judiciously adding a new circle tangent to at least three existing circles in an $N$-gon with $N > 4$, one might decompose the configuration into smaller polygons, ultimately reducing all axial polygons to triangles and quadrilaterals. If successful, this approach would enable the design of arbitrary circle-packed bases using only geometric constructions, without reliance on computational tools. In all cases examined by Lang, such a reduction was indeed possible---one could always find a circle whose addition ensures that the largest reduced polygon has at most $N-1$ sides. However, a rigorous proof that this reduction is always achievable remained an open problem.

The significance of this problem lies not in its practical utility---adding circles introduces additional flaps that may complicate the folding process---but rather in its theoretical implications for understanding the geometric structure of circle packings in origami design. The question touches on fundamental properties of tangent circle arrangements and their associated polygonal decompositions. Lang noted that while certain configurations admit circles whose addition preserves the polygon order, in all examined cases an alternative placement could be found that achieves the desired reduction. This observation suggested a deeper geometric principle at work.

In this paper, we prove that Lang's conjecture holds in full generality:

\begin{theorem}\label{thm:main}
	For any $N$--gon of touching circles with $N>4$,
	it is always possible to add a circle touching at least three others
	so that the largest reduced polygon has, at most, $N-1$ sides.
\end{theorem}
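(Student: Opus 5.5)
We set up the geometry as follows. Let the axial $N$-gon $P$ have vertices $c_1,\dots,c_N$ (circle centers) with radii $r_1,\dots,r_N$, where consecutive circles touch. Let $R$ be the region of $P$ not covered by the circles. We study the medial axis $\MA$ of $R$ with respect to the circles: the locus of points $x$ whose weighted distance $\min_i(\lvert x-c_i\rvert-r_i)$ is attained by at least two circles. This is the additively weighted Voronoi diagram restricted to $P$.

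A point of $\MA$ that is equidistant from three or more circles, at distance $\rho>0$, is the center of a circle of radius $\rho$. That circle touches those three (or more) circles and overlaps none. The leaves of $\MA$ sit at the tangency points on the edges of $P$. Hence $\MA$ is a tree (a planar forest, and connected because $R$ is simply connected up to degenerate pinches), with $N$ leaves and internal vertices of degree at least $3$. Each internal vertex $v$ determines a candidate added circle $C_v$. Adding $C_v$ splits $P$ into smaller axial polygons, one for each consecutive pair of circles touching $C_v$. If $C_v$ touches circles $i_1<\dots<i_k$, the pieces have $i_{j+1}-i_j+2$ sides, indices taken cyclically.

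The combinatorial core is as follows. For a vertex $v$ of degree $k\ge 3$, deleting $v$ splits $\MA$ into $k$ branches. A branch containing $m$ leaves corresponds to a reduced polygon with $m+2$ sides: the branch is bounded by the arc from $c_{i_j}$ to $c_{i_{j+1}}$, which contains $m$ tangency points, hence $m+1$ circles, plus the new circle. We therefore need a vertex $v$ all of whose branches contain at most $N-3$ leaves.

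This is a centroid-type argument, which we run as a walk. Start at any internal vertex. If some branch has at least $N-2$ leaves, step to the neighbor in that branch. The complement of that branch contains at most $2$ leaves, so the branch pointing back has at most $2\le N-3$ leaves, since $N\ge5$. The number of leaves in the big branch strictly decreases at each step, so the walk cannot cycle and must terminate at a good vertex. One case needs care. Suppose the heavy branch of $v$ leads directly to a leaf. That branch has $1$ leaf, so it is not heavy, because $N-2\ge 3$. Thus the walk always moves to an internal vertex.

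The step I expect to be the main obstacle is justifying the structure of $\MA$ and the correspondence between leaves and sides, including degenerate configurations. Degeneracies include circles touching non-adjacent circles, which pinch $R$ and make $\MA$ disconnected; in that case $P$ is already reducible and we pass to a component. They also include Voronoi vertices lying outside $P$ and vertices of degree above $3$. To handle these, I would first show that the $N$ tangency points are exactly the leaves. Each bisector of two adjacent circles emanates from their tangency point into $P$. I would then show that all internal vertices lie inside $P$, using convexity-like properties of axial polygons: the centers of the gaps are within the polygon because the radii along the boundary are fully accounted for by the tangencies.

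Finally, the resulting circle $C_v$ must be verified to be admissible, with its center inside the square, and the rivers case would be handled analogously. There, river bands replace some circles and contribute bisector pieces with straight or offset edges; the tree structure and the leaf count are unchanged.
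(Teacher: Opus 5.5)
Your architecture is the paper's. The medial axis of the free region (equivalently, the Apollonius diagram inside it) is a tree with leaves $t_1,\dots,t_N$. Its internal vertices are admissible centers, branch leaf counts are gaps, and a tree lemma supplies a good vertex. Your walk is a correct proof of that tree lemma, and simpler than the paper's degree count in Lemma~\ref{lem:treelemma}. In fact it stops after at most one step. A branch with at least $N-2$ leaves at $v$ forces $\deg v=3$, two single-leaf branches, and exactly $N-2$ leaves in the heavy branch. So at the neighbor $u$ the back branch has $2\le N-3$ leaves, and the at least two forward branches split $N-2$ leaves, each getting at most $N-3$.

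The gap is the geometric half, which you assert rather than prove; this is where the paper does its real work.

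\emph{Branch--gap correspondence.} The decisive claim is that the branch of $\MA\setminus\{v\}$ between consecutive touched circles $C_{i_j}$ and $C_{i_{j+1}}$ contains exactly the leaves $t_{i_j},\dots,t_{i_{j+1}-1}$. Saying the branch ``is bounded by the arc'' does not prove it. In your Voronoi language you would need three facts. First, the spokes $\overline{vc_{i_j}}$ lie in $P$ and meet the diagram only at $v$. This follows because each Apollonius cell is star-shaped about its center, strictly so since the open disks are disjoint, and because $D_i\cap P$ is the sector at $c_i$ (Lemma~\ref{lem:domain}\ref{it:sector}). Second, exactly one edge leaves $v$ into each sector between consecutive spokes. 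Third, connectivity of the tree then routes every leaf in a sector through that edge. The paper obtains the same conclusion from the continuous boundary-to-axis map $\beta$ (Lemma~\ref{lem:tree}\ref{it:branches}). The same spoke facts are what make the subdivision into simple $(g_j+2)$-gons of touching circles valid (Lemma~\ref{lem:split}), which you also take for granted.

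\emph{Tree structure.} This too is only asserted. You need the free region to be a Jordan domain (Lemma~\ref{lem:domain}\ref{it:jordan}). You need the diagram in it to be connected \emph{and} acyclic; the paper invokes Theorem~\ref{thm:ccm}, and star-shaped cells would also give acyclicity. You also need its only leaves to be the $t_i$.

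\emph{Boundary behavior.} Your plan to keep internal vertices inside $P$ via ``convexity-like properties'' fails as stated, since $P$ need not be convex. The right input is Lemma~\ref{lem:cover}: each edge of $P$ is covered by the two disks at its ends. Hence the tangency points are the only free points of $\partial P$, and the diagram restricted to the free region meets $\partial P$ only there.

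\emph{Rivers.} Your closing river remark is outside this theorem and also inaccurate. Expanded disks can swallow arcs of other circles, so the leaf count can fall below $N$; this is why the paper needs a separate four-leaf case.
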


Furthermore, we show that the result extends to configurations incorporating rivers, demonstrating the robustness of the underlying geometric principle.

Our proof rests on the notion of the \emph{medial axis} of a plane domain, introduced by Blum \cite{Blum1967} in the context of shape recognition, and on the rigorous structure theory of medial axes later developed by Choi, Choi, and Moon \cite{ChoiChoiMoon1997}. The key observation is that the free region enclosed by the circles of an axial polygon has a medial axis which is a tree with exactly $N$ leaves, and that the internal vertices of this tree are precisely the centers of the circles that may be added to the configuration so as to touch at least three of its circles. This identification absorbs all the geometry of the problem at once, as Lang's conjecture reduces to a purely combinatorial property of trees, which we isolate as an elementary tree lemma (Lemma~\ref{lem:treelemma}), the only non-geometric ingredient of the proof.

The paper is organized as follows. Section~\ref{sec:preliminaries} collects the necessary definitions and background. Section~\ref{sec:proof} proves Theorem~\ref{thm:main}, together with a quantitative consequence---at most $N-4$ added circles suffice to reduce an $N$-gon completely into triangles and quadrilaterals (Corollary~\ref{cor:bound}). Section~\ref{sec:construction} turns the proof into an explicit procedure. The construction is fully algorithmic---all figures in this paper were generated by our implementation of it---and, since each added circle solves a classical Apollonius tangency problem, it is constructible with straightedge and compass, so the reduction stays within the purely geometric means whose pursuit motivated Lang's question in the first place. Section~\ref{sec:optimality} shows that the bound of $N-4$ added circles cannot be improved. Section~\ref{sec:rivers} extends the result to polygons crossed by a river, and Section~\ref{sec:multirivers} to polygons crossed by any number of rivers.


\section{Preliminaries}
\label{sec:preliminaries}

\subsection{Polygons of touching circles}

Throughout the paper, all indices of objects associated with an $N$-gon are understood cyclically with respect to $N$ (unless otherwise stated), so that $v_{N+1}=v_1$, $C_0 = C_N$, and so on.

\begin{definition}\label{def:ring}
	Let $C_1,\dots,C_N$ be a collection of circles, where $C_i$ is centered at $v_i$ and has radius $r_i>0$.
	We say that the circles form an \emph{$N$-gon of touching circles} if
	\begin{enumerate}
		\item\label{it:polygon} the centers $v_1,\dots,v_N$, in this cyclic order, are the vertices of a simple polygon $P$;
		\item\label{it:tangent} $\dist{v_iv_{i+1}} = r_i + r_{i+1}$ for all $i$, that is, consecutive circles are externally tangent; and
		\item\label{it:disjoint} $\dist{v_iv_j} > r_i + r_j$ for all non-consecutive pairs $i,j$, that is, all other pairs of circles are disjoint.
	\end{enumerate}
	We write $D_i$ for the closed disk bounded by $C_i$, and $t_i \in \overline{v_iv_{i+1}}$ for the point of tangency of $C_i$ and $C_{i+1}$.
\end{definition}

These configurations are exactly the axial polygons arising in the circle/river method, in the special case where no river is present (the case with rivers will be covered in Section~\ref{sec:rivers}). We emphasize that $P$ is not assumed to be convex. Condition~\ref{it:disjoint} excludes the degenerate case in which two non-consecutive circles also touch; this loses no generality, since such a configuration is already subdivided by the tangency into two polygons of touching circles of order at most $N-1$ each, and it suffices to apply Theorem~\ref{thm:main} to a sub-polygon of order greater than four.

Definition~\ref{def:ring} determines how the disks sit inside the polygon.

\begin{lemma}\label{lem:cover}
	In an $N$-gon of touching circles, each edge $\overline{v_iv_{i+1}}$ of $P$ is covered by $D_i \cup D_{i+1}$; moreover, $D_i$ contains no vertex of $P$ other than $v_i$ and meets no edge of $P$ other than the two incident to $v_i$.
\end{lemma}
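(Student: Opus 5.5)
The plan is to derive all three claims directly from the distance conditions \ref{it:tangent} and \ref{it:disjoint} of Definition~\ref{def:ring}, together with the simplicity of $P$ from \ref{it:polygon}, using the triangle inequality.

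\textbf{Covering.} Let $x\in\overline{v_iv_{i+1}}$. Then $\dist{v_ix}+\dist{xv_{i+1}}=\dist{v_iv_{i+1}}=r_i+r_{i+1}$ by \ref{it:tangent}. Hence either $\dist{v_ix}\le r_i$, so $x\in D_i$, or $\dist{xv_{i+1}}<r_{i+1}$, so $x\in D_{i+1}$. The same computation shows that $D_i\cap D_{i+1}=\{t_i\}$. Indeed, a common point $y$ satisfies $\dist{v_iv_{i+1}}\le \dist{v_iy}+\dist{yv_{i+1}}\le r_i+r_{i+1}$. Equality then forces $y$ to lie on the segment $\overline{v_iv_{i+1}}$ at distance $r_i$ from $v_i$, so $y=t_i$. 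Note that $t_i$ is distinct from both endpoints $v_i$ and $v_{i+1}$.

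\textbf{Vertices.} For $j\ne i$, either $j$ is consecutive to $i$, in which case $\dist{v_iv_j}=r_i+r_j>r_i$, or it is not, in which case $\dist{v_iv_j}>r_i+r_j>r_i$ by \ref{it:disjoint}. In both cases $v_j\notin D_i$.

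\textbf{Edges.} Take an edge $\overline{v_jv_{j+1}}$ with $j\notin\{i-1,i\}$, and suppose some point $x$ of it lies in $D_i$. By the covering part, $x\in D_j$ or $x\in D_{j+1}$; call that index $k$, so $k\ne i$. Then $\dist{v_iv_k}\le \dist{v_ix}+\dist{xv_k}\le r_i+r_k$. By \ref{it:disjoint} this forces $k$ to be consecutive to $i$, and then by the computation above $x=t_i$ or $x=t_{i-1}$.

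So the only remaining task is to show that the tangency points $t_i$ and $t_{i-1}$ do not lie on an edge other than the two incident to $v_i$. I expect this to be the only step needing care, and simplicity of $P$ handles it. The point $t_i$ lies in the relative interior of the edge $\overline{v_iv_{i+1}}$. In a simple polygon, two distinct edges meet only at a shared endpoint, if at all, and an interior point of one edge is never an endpoint. Hence $t_i$ belongs to no other edge, and likewise $t_{i-1}$ belongs only to $\overline{v_{i-1}v_i}$. Since $\overline{v_jv_{j+1}}$ is neither of these edges, this contradicts $x\in\overline{v_jv_{j+1}}$.

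The small case $N=3$ is covered by the same argument. There the only non-incident edge is $\overline{v_{i+1}v_{i-1}}$, and its points lie in $D_{i+1}\cup D_{i-1}$, both of which meet $D_i$ only at $t_i$ and $t_{i-1}$ respectively.
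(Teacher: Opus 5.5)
Your proof is correct and follows the paper's argument essentially step for step. It uses the same covering computation and the same vertex bound, and in the edge step the equality case of the triangle inequality likewise places $x$ at a tangency point interior to an incident edge, after which simplicity of $P$ gives the contradiction. Isolating $D_i\cap D_{i+1}=\{t_i\}$ as a separate fact is only a cosmetic reorganization, and the remark on $N=3$ is unnecessary, since the general argument already covers that case.
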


\begin{proof}
	A point $x$ on the edge $\overline{v_iv_{i+1}}$ at distance $\alpha$ from $v_i$ satisfies either $\alpha \le r_i$, in which case $x \in D_i$, or $\dist{xv_{i+1}} = (r_i + r_{i+1}) - \alpha < r_{i+1}$, in which case $x \in D_{i+1}$. For any $j \ne i$ we have $\dist{v_iv_j} \ge r_i + r_j > r_i$, so $v_j \notin D_i$. Finally, suppose $x \in D_i$ lies on an edge $e = \overline{v_jv_{j+1}}$ not incident to $v_i$. Since $e \subseteq D_j \cup D_{j+1}$, we have $x \in D_\ell$ for some $\ell \in \{j, j+1\}$ with $\ell \ne i$, and hence $\dist{v_iv_\ell} \le \dist{v_ix} + \dist{xv_\ell} \le r_i + r_\ell$. If $C_i$ and $C_\ell$ are non-consecutive this contradicts Definition~\ref{def:ring}\ref{it:disjoint}. If they are consecutive, then equality holds throughout, so $x$ lies on the segment $\overline{v_iv_\ell}$---an edge of $P$ incident to $v_i$---at distance $r_i$ from $v_i$, an interior point of that edge since $0 < r_i < r_i + r_\ell$. The edge $e$ thus meets the edge $\overline{v_iv_\ell}$ at an interior point of the latter, contradicting the simplicity of $P$.
\end{proof}

For a point $p$ of the plane, let
\begin{equation}
	d_i(p) = \dist{pv_i} - r_i
\end{equation}
denote the signed distance from $p$ to the circle $C_i$; thus $d_i(p) > 0$ if and only if $p$ lies outside $D_i$.

\begin{definition}\label{def:add}
	Given an $N$-gon of touching circles and a circle $C$ with center $p$ in the interior of $P$ and radius $\rho>0$,
	we say that $C$ \emph{can be added} to the configuration if $d_i(p) \ge \rho$ for all $i$,
	and that it \emph{touches} $C_i$ if $d_i(p)=\rho$.
	We call $C$ \emph{admissible} if it can be added and touches at least three of the circles.
\end{definition}

Note that an added circle is automatically contained in $P$. Indeed, by Lemma~\ref{lem:cover}, every edge of $P$ is covered by the two disks centered at its endpoints, so a disk that avoids all the $D_i$ and whose center lies in $P$ cannot reach the boundary of $P$.

The reduction will only ever add admissible circles. When an admissible circle $C$ touching $C_{i_1},\dots,C_{i_k}$ (with $i_1 < \cdots < i_k$, so $k \ge 3$, and the subscript $j$ of $i_j$ understood cyclically with respect to $k$) is added, the segments connecting $p$ to the centers $v_{i_1},\dots,v_{i_k}$ subdivide $P$ into $k$ smaller polygons of touching circles, as illustrated in Figure~\ref{fig:split}(a). The polygon between the tangencies at $C_{i_j}$ and $C_{i_{j+1}}$ has $g_j + 2$ sides, where
\begin{equation}\label{eq:gap}
	g_j = i_{j+1} - i_j \pmod N, \qquad \sum_{j=1}^{k} g_j = N.
\end{equation}
We call $g_1,\dots,g_k$ the \emph{gaps} of the added circle. Every reduced polygon has at most $N-1$ sides if and only if every gap satisfies $g_j \le N-3$. In particular, a circle nestled into a ``corner'' of the configuration, touching three consecutive circles $C_{i-1}$, $C_i$, $C_{i+1}$, has gaps $(1,1,N-2)$ and produces two triangles together with another $N$-gon, as in Figure~\ref{fig:split}(b); this is precisely the non-reducing placement observed by Lang, and the content of Theorem~\ref{thm:main} is that a reducing placement always exists.

\begin{figure}[tb]
	\centering
	\subfloat[]{\includegraphics[width=0.46\textwidth]{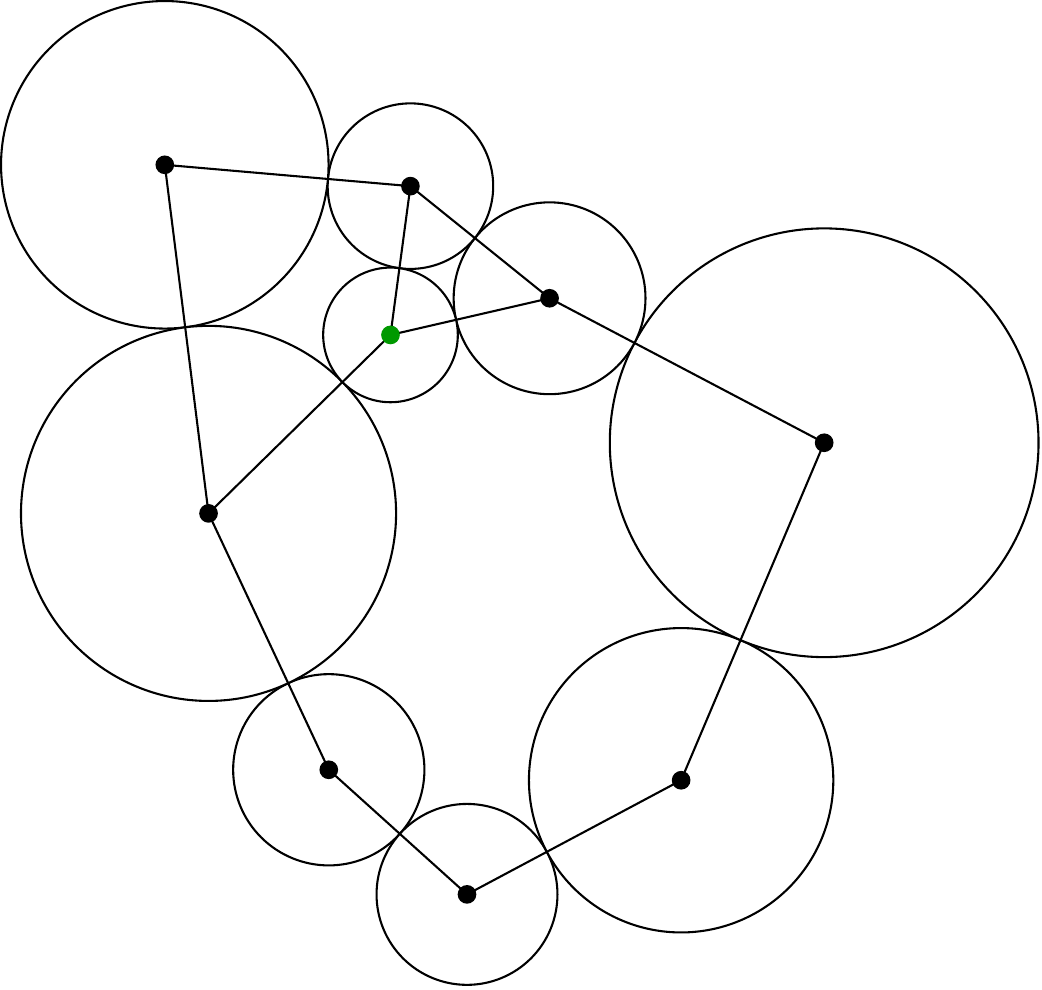}}\hspace{0.04\textwidth}
	\subfloat[]{\includegraphics[width=0.46\textwidth]{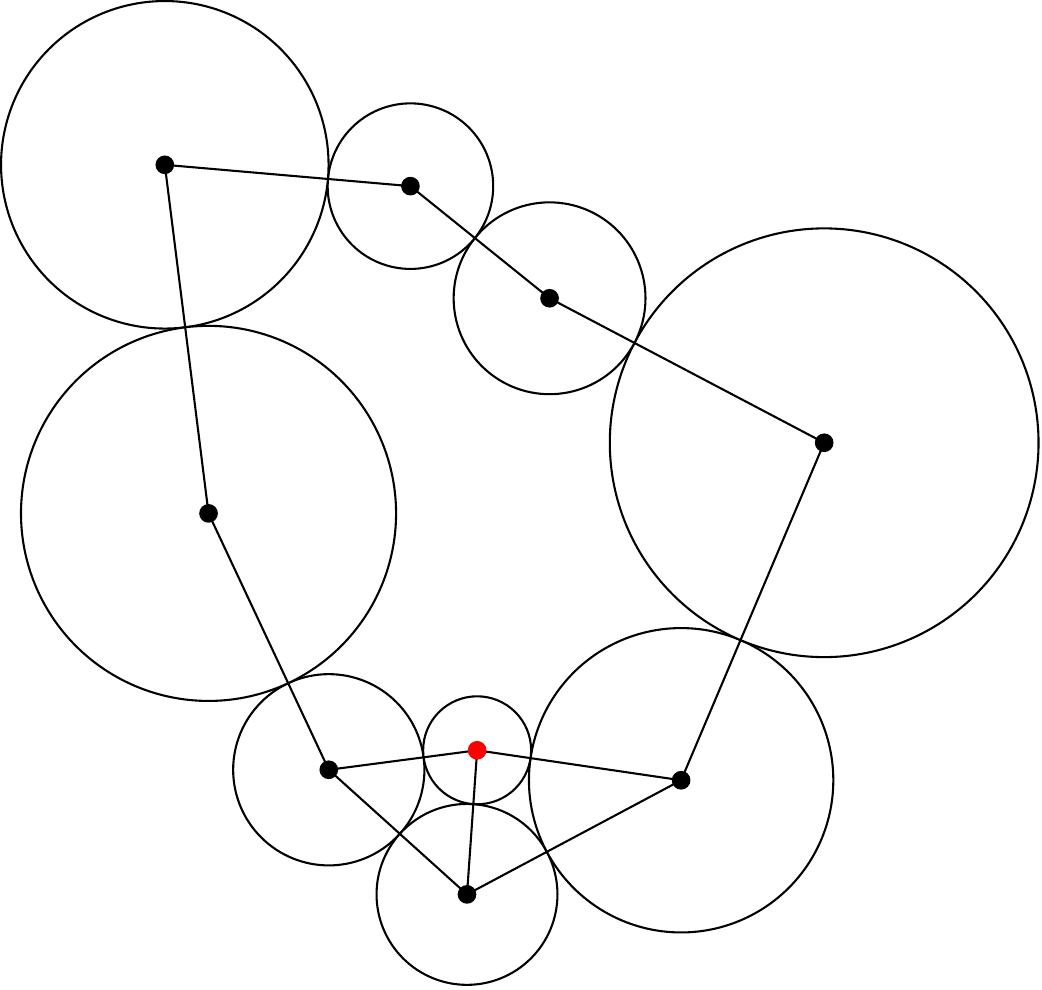}}
	\caption{Two subdivision steps applied to the same octagon of touching circles ($N = 8$). A circle (center marked) is added, and the segments from its center to the centers of the touched circles subdivide the octagon. (a) The added circle touches three of the circles with gaps $1$, $2$, $5$, producing a triangle, a quadrilateral, and a heptagon---each again a polygon of touching circles. (b) A circle nestled into a corner, touching three consecutive circles. The gaps are $(1,1,6)$, and the subdivision produces two triangles together with another octagon.}
	\label{fig:split}
\end{figure}

\subsection{The medial axis}

Let $\Omega \subset \mathbb{R}^2$ be a compact connected set which is the closure of its interior. A closed disk $\mathcal{D} \subseteq \Omega$ is \emph{maximal} if it is not contained in any strictly larger disk contained in $\Omega$. The \emph{medial axis} of $\Omega$ is
\begin{equation}
	\MA(\Omega) = \{\, p \in \Omega : p \text{ is the center of a maximal disk} \,\},
\end{equation}
where, by convention, the \emph{sharp corners} of $\partial\Omega$---corner points whose interior angle is less than $\pi$, at which the maximal disk degenerates to a point---are included in $\MA(\Omega)$. For $p \in \MA(\Omega)$ with its corresponding maximal disk $\mathcal{D}_p$, the set $\mathcal{D}_p \cap \partial\Omega$ is called the \emph{contact set} of $p$, and its connected components the \emph{contact components}.

The medial axis was introduced by Blum \cite{Blum1967} as a tool for shape description. A rigorous structure theory was developed by Choi, Choi, and Moon \cite{ChoiChoiMoon1997} for the class of connected domains bounded by finitely many disjoint simple closed curves (allowing holes), each curve consisting of finitely many real-analytic pieces. All domains appearing in this paper belong to this class, and moreover all their boundary corners are sharp. We summarize the facts we need. Here a \emph{geometric graph} is a subset of the plane which is a connected finite graph whose vertices are points and whose edges are real-analytic curves of finite length.

\begin{theorem}[Choi--Choi--Moon \cite{ChoiChoiMoon1997}]\label{thm:ccm}
	Let $\Omega$ be a domain of the above class, other than a disk. Then the following hold.
	\begin{enumerate}
		\item\label{ccm:graph} $\MA(\Omega)$ is a geometric graph {\upshape(their Theorem 8.2)}; if $\Omega$ is simply connected, it is a tree {\upshape(by their Corollary 8.1)}.
		\item\label{ccm:degree} The degree of a point $p \in \MA(\Omega)$ as a vertex of this graph equals the number of contact components of $p$ {\upshape(their Section 8.2)}.
		\item\label{ccm:leaf} Every vertex of degree one is either a sharp corner of $\partial\Omega$, or the center of a maximal disk which osculates $\partial\Omega$ along its unique contact component {\upshape(their Theorem 8.2)}.
		\item\label{ccm:beta} For every point $q \in \partial\Omega$ other than a corner, there is a unique maximal disk whose contact set contains $q$; writing $\beta(q)$ for its center, the map $\beta$ is continuous away from the corners {\upshape(their Theorem 7.2)}.
	\end{enumerate}
\end{theorem}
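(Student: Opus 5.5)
The statement just given is the cited structure theorem of Choi, Choi and Moon, and in the paper it is used as a black box. If I had to reprove it, I would organize the argument around the distance function $f(x)=\operatorname{dist}(x,\partial\Omega)$ and the boundary map $\beta$, in the following order: \ref{ccm:beta}, \ref{ccm:degree}, \ref{ccm:graph}, \ref{ccm:leaf}.

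\textbf{Step 1: the map $\beta$.} For a non-corner boundary point $q$, let $n(q)$ be the inward unit normal. Consider the disks of radius $t$ centered at $q+t\,n(q)$; each is tangent to $\partial\Omega$ at $q$. Let $t^*(q)$ be the supremum of the $t$ for which this disk lies in $\Omega$. The disk with $t=t^*(q)$ is maximal and contains $q$ in its contact set. Conversely, any maximal disk touching $\partial\Omega$ at a smooth point $q$ must be tangent there, so its center lies on this normal ray. Maximality then forces the radius to be $t^*(q)$, which gives uniqueness. I would prove continuity of $\beta(q)=q+t^*(q)\,n(q)$ by a compactness argument. A limit of maximal disks tangent at $q_n\to q$ is a disk in $\Omega$ tangent at $q$, which gives upper semicontinuity of $t^*$. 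Lower semicontinuity follows because a disk of radius slightly below $t^*(q)$ is strictly interior away from a neighbourhood of $q$, so it survives small perturbations of $q$.

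\textbf{Step 2: local structure and degree.} Fix $p\in\MA(\Omega)$ with maximal disk $\mathcal D_p$. By real-analyticity, $\mathcal D_p\cap\partial\Omega$ has finitely many components; each is a point, an arc of $\partial\Omega$ lying on $\partial\mathcal D_p$, or a corner. Take $k$ components, cyclically ordered around $\partial\mathcal D_p$. Near $p$, the medial axis should be the union of the $k$ equidistance curves between consecutive components, each an analytic arc emanating from $p$. One shows this by the implicit function theorem applied to $f_a-f_b$ for the two distance-to-piece functions, which are analytic near $p$. Any other nearby medial point would have a contact set close to $\mathcal D_p\cap\partial\Omega$, which forces it onto one of these arcs. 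This gives local degree $k$, as claimed in \ref{ccm:degree}.

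\textbf{Step 3: graph and tree.} The set $\MA(\Omega)$ is closed, since it is the closure of the points where $f$ is non-differentiable, together with the corners. It is also connected and of finite length. For finiteness I would show that the branch points (those with $k\ge3$) and the endpoints are isolated, using the analyticity of the bisector curves. Points with $k\ge 3$ are solutions of two independent analytic equations, and any accumulation would force an identity between analytic pieces that contradicts the finiteness of the pieces. Compactness then gives finitely many vertices.

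For the topology, I would construct a deformation retraction of $\Omega$ onto $\MA(\Omega)$: slide each point along the normal segment from $q$ to $\beta(q)$, using the continuity from Step 1. This makes $\MA(\Omega)$ homotopy equivalent to $\Omega$. In the simply connected case a connected finite graph with trivial fundamental group is a tree.

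\textbf{Step 4: leaves.} At a degree-one vertex $p$ there is a single contact component. If it is a corner, $p$ is that sharp corner with a degenerate disk. If it is an arc of positive length, then $p$ is the common center of curvature of that arc, and nearby points of $\MA(\Omega)$ would have two contact components, so the arc case cannot be a leaf unless the arc is the whole circle, and the whole-circle case is excluded since $\Omega$ is not a disk. If it is a single point $q$, the endpoint of the arc $\beta$ near $q$ is reached exactly where $t^*$ equals the radius of curvature at $q$, that is, where $\mathcal D_p$ osculates $\partial\Omega$.

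\textbf{Main obstacle.} I expect the finiteness claim in Step 3 to be the main difficulty: ruling out infinitely many branch points accumulating. This is where real-analyticity (rather than mere smoothness) is essential, and I would handle it via Łojasiewicz-type finiteness for analytic sets.
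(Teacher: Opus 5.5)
The paper gives no proof of this statement. It is imported from Choi--Choi--Moon, with pointers to their Theorems~7.2 and~8.2, Corollary~8.1 and Section~8.2, so your outline has to stand on its own, and as written it does not.

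Step~1 is the standard argument and is essentially right; lower semicontinuity also needs a local curvature comparison near $q$, using $\kappa(q)\le 1/t^*(q)<1/t$. Sliding along the segments from $q$ to $\beta(q)$ is also a sensible way to get the tree property once finiteness is known.

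But Step~4 asserts something false: a degree-one vertex whose unique contact component is an arc of positive length does occur. Take a stadium, two unit semicircles joined by two parallel segments. Its medial axis is the segment joining the two centers, and at each endpoint the maximal disk meets $\partial\Omega$ along a closed semicircle. That is one contact component and degree one. This is precisely what the statement means by a disk that osculates $\partial\Omega$ \emph{along} its unique contact component, since a boundary arc lying on $\partial\mathcal D_p$ has curvature $1/\rho$. So this case must be admitted, not excluded. Your reason, that nearby medial points have two contact components, says nothing about the degree of $p$: every point inside an edge has two. The claim also contradicts your own Step~2, which gives degree $k=1$ here.

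The slip exposes the actual gap, which is in Step~2. Your implicit-function argument needs the distance to each contacted piece to be analytic near $p$ with gradient $(p-q)/\rho$. That holds only if every contact component is a single non-focal point.
\begin{itemize}
\item At an arc component the relevant distance is $\rho-\lvert x-p\rvert$, which is singular at $p$.
\item At an osculating point contact, $p$ is the focal point of $q$ and the footpoint map degenerates.
\end{itemize}
At such vertices the branch leaving $p$ is not a bisector of two distinct consecutive components. Its disks touch $\partial\Omega$ at two points that converge into one and the same component of $p$: the two ends of the semicircle, or the two sides of a curvature maximum. So your claim that every nearby medial point is forced onto one of your bisector arcs is false there, and for $k=1$ there is no pair of components to bisect at all. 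Even at non-focal point contacts, excluding nearby medial points with two contacts near the same $q$ needs a curvature comparison you do not supply. Hence part~(2) is unproved exactly at the vertices that part~(3) is about.

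The finiteness in Step~3 is likewise only announced. Accumulation of vertices must be ruled out precisely at these degenerate points, where transversality of two bisector equations is unavailable. The subanalytic route you mention could deliver finiteness, but it is a different argument and is not carried out.

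In the paper's own setting none of these degeneracies arises. The boundary arcs of $\FR$ curve away from it, so apart from the corners every contact component of a maximal disk is a single non-focal point. That is why the paper can use the cited theorem safely, but the statement itself is for the general class.
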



\section{Proof of the main theorem}
\label{sec:proof}

Fix an $N$-gon of touching circles as in Definition~\ref{def:ring}, and define its \emph{free region}
\begin{equation}
	\FR \;=\; \overline{\,P \setminus (D_1 \cup \cdots \cup D_N)\,},
\end{equation}
the closure of the part of the polygon not covered by the disks. See Figure~\ref{fig:tree}.

\begin{figure}[tb]
	\centering
	\includegraphics[width=0.5\textwidth]{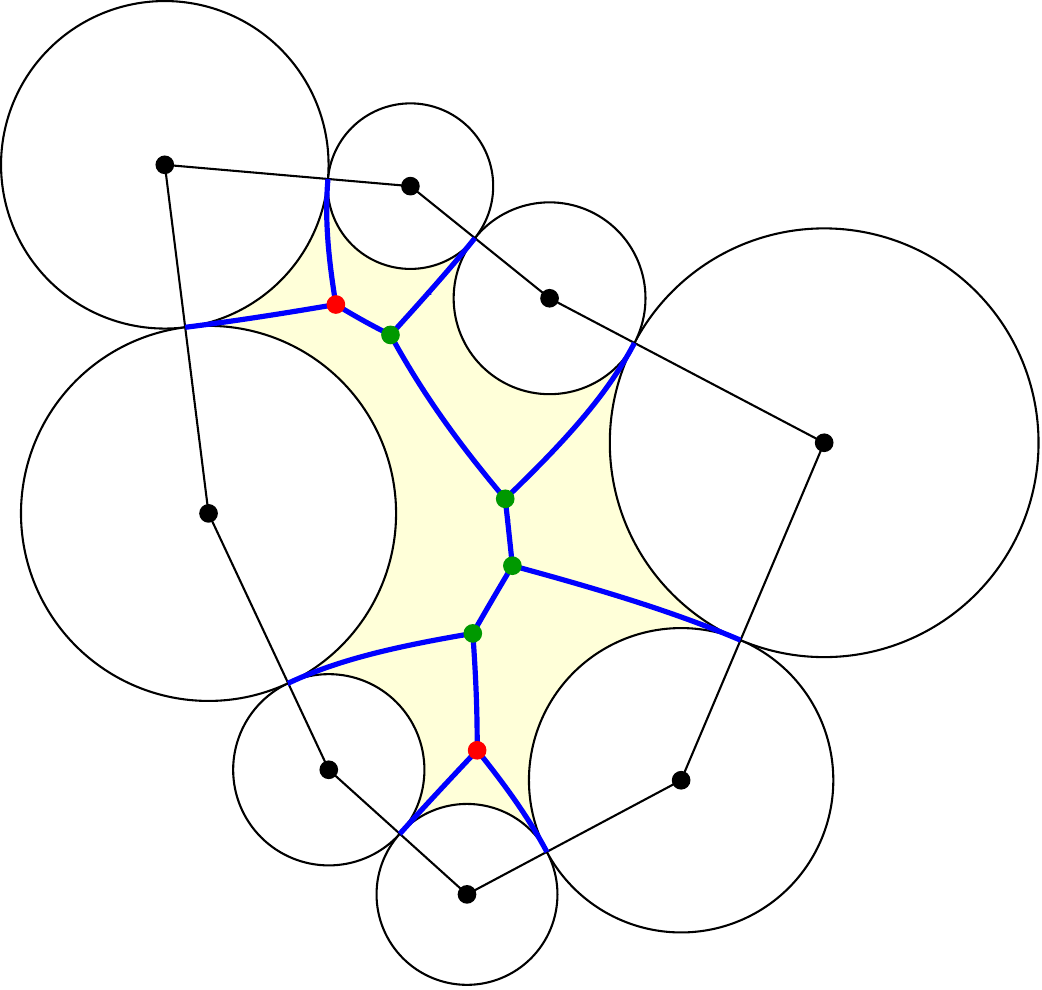}
	\caption{The free region $\FR$ (yellow) and its medial axis (blue) for the octagon of touching circles of Figure~\ref{fig:split}. The medial axis is a tree whose $N$ leaves are the tangency points $t_i$; its edges are arcs of Apollonius hyperbolas, and its internal vertices are exactly the centers of the admissible circles. Internal vertices are marked green if all their gaps are at most $N-3$, so that adding the corresponding circle leaves only polygons with at most $N-1$ sides, and red otherwise; Theorem~\ref{thm:main} asserts that a green vertex always exists. The additions of Figure~\ref{fig:split}(a) and (b) are those at a green and at a red vertex of this tree, respectively.}
	\label{fig:tree}
\end{figure}

\begin{lemma}\label{lem:domain}
	With notation as above:
	\begin{enumerate}
		\item\label{it:arc} $C_i \cap P$ is a single arc $\mathcal{A}_i$, running from $t_{i-1}$ to $t_i$ and spanning the interior angle of $P$ at $v_i$;
		\item\label{it:jordan} the arcs $\mathcal{A}_1,\dots,\mathcal{A}_N$ are pairwise disjoint except that consecutive arcs share a tangency point, and their union is a simple closed curve bounding $\FR$. Consequently $\FR$ is a simply connected domain of the class of Theorem~\ref{thm:ccm}, and the only corners of $\partial\FR$ are the $N$ points $t_1,\dots,t_N$, at which two arcs meet in a cusp of interior angle zero---a sharp corner;
		\item\label{it:sector} $D_i \cap P$ is the closed circular sector of $D_i$ spanning the interior angle of $P$ at $v_i$; in particular, for every $x \in D_i \cap P$, the segment $\overline{v_ix}$ lies in $P$.
	\end{enumerate}
\end{lemma}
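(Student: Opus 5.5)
I would prove the three parts in the order (3), (1), (2), since the sector description gives the arc description almost immediately, and the Jordan-curve statement is then assembled from the arcs.

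\emph{Part (3).} Fix $i$ and let $e^- = \overline{v_{i-1}v_i}$ and $e^+ = \overline{v_iv_{i+1}}$. By Lemma~\ref{lem:cover}, $D_i$ meets $\partial P$ only in $e^-\cup e^+$. Moreover $D_i\cap e^\pm$ is the initial segment of length $r_i$ from $v_i$, ending at $t_{i-1}$ or $t_i$ respectively, since $r_i < \dist{v_iv_{i\pm1}}$. The two radii $\overline{v_it_{i-1}}$ and $\overline{v_it_i}$ cut $D_i$ into two closed sectors, whose angles are the interior angle $\theta_i$ of $P$ at $v_i$ and $2\pi-\theta_i$. Call them $S$ and $S'$.
\begin{itemize}
\item The interior of $S$ is connected and avoids $\partial P$, so it lies entirely inside or entirely outside $P$.
\item Points of $D_i$ near $v_i$ in the direction of the interior angle lie in $P$, because $P$ is a simple polygon and is locally a wedge of angle $\theta_i$ at $v_i$. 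Hence $S\subseteq P$.
\item By the same argument $S'$ lies outside $P$ except along its two bounding radii.
\end{itemize}
So $D_i\cap P=S$. The segment property follows because a sector is star-shaped about its apex.

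\emph{Part (1).} This is immediate from (3): $C_i\cap P=C_i\cap S$ is the boundary arc of $S$. It runs from $t_{i-1}$ to $t_i$ and subtends angle $\theta_i$.

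\emph{Part (2).} I would first show the arcs are disjoint apart from shared tangency points.
\begin{itemize}
\item Non-consecutive disks are disjoint by Definition~\ref{def:ring}\ref{it:disjoint}.
\item Consecutive circles $C_i$ and $C_{i+1}$ are externally tangent, so they meet only at $t_i$.
\item The tangency points $t_i$ are distinct interior points of distinct edges.
\end{itemize}
Concatenating $\mathcal A_1,\dots,\mathcal A_N$ through $t_1,\dots,t_N$ then gives a simple closed curve $\Gamma\subseteq P$.

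Next I would identify the region $\Gamma$ bounds with $\FR$. By (3), $P\setminus\bigcup D_i = P\setminus\bigcup S_i$.
\begin{itemize}
\item The sectors $S_i$ are pairwise disjoint except at the points $t_i$.
\item Their union contains $\partial P$, by Lemma~\ref{lem:cover}.
\item Each $S_i$ is a topological disk attached to $\partial P$ along $\overline{v_it_{i-1}}\cup\overline{v_it_i}$.
\end{itemize}
Removing these disks from the closed disk $P$ leaves the closure of the interior of $\Gamma$. Concretely, $\partial\bigl(P\setminus\bigcup S_i\bigr)$ consists of the portions of $\partial S_i$ not on $\partial P$, which are exactly the arcs $\mathcal A_i$.

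The cleanest way to argue this is to take the Jordan interior $U$ of $\Gamma$ and check three things:
\begin{itemize}
\item $U\subseteq P$;
\item $U$ misses every $D_i$;
\item every point of $P$ outside $\overline U$ lies in some $S_i$.
\end{itemize}
The last check is where I expect the main obstacle. The plan is to take a path from such a point to $\partial P\subseteq\bigcup S_i$ that avoids $\Gamma$. Any point where the path leaves the region complementary to $\bigcup S_i$ must lie on some $\partial S_i\setminus\partial P$, which is contained in $\Gamma$. This is impossible, so the point already lay in $\bigcup S_i$.

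Consequently $\FR=\overline U$ is simply connected. Its boundary consists of finitely many circular arcs, which are real-analytic, so $\FR$ is in the class of Theorem~\ref{thm:ccm}.

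At $t_i$ the arcs $\mathcal A_i$ and $\mathcal A_{i+1}$ share the tangent line perpendicular to $\overline{v_iv_{i+1}}$, and they bend away from each other toward their own centers. The free region near $t_i$ is therefore the cusp between two externally tangent circles, which has interior angle $0<\pi$, a sharp corner. Every other boundary point lies in the relative interior of a single arc, where the boundary is smooth, so these $N$ points are the only corners.
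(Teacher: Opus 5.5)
Your proof is correct, but it is organized differently from the paper's.

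\emph{Parts (3) and (1).} Your argument for (3) is the paper's: the two radii cut $D_i$ into two sectors whose interiors avoid $\partial P$, and the local wedge at $v_i$ decides which sector lies in $P$. The difference is that you prove (3) first and read off (1) as the bounding arc of that sector. The paper proves (1) separately, by noting that $C_i$ meets $\partial P$ in exactly two points and taking ``the arc emanating from the interior angle''. That step tacitly needs the same sector reasoning, so your ordering makes the dependency explicit and is a little tidier.

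\emph{Part (2).} The paper works from $\FR$ outward:
\begin{enumerate}
\item it proves $\partial\FR=\gamma$ by two inclusions;
\item it identifies the complement of $\FR$ as the exterior of $P$ together with the open disks;
\item it observes that this set is connected, because each open disk reaches outside $P$;
\item it concludes that $\FR$ is the closed Jordan domain of $\gamma$.
\end{enumerate}
You work from $\Gamma$ inward and prove $\FR=\overline U$ by two inclusions. The inclusion $U\subseteq P\setminus\bigcup D_i$ gives $\overline U\subseteq\FR$. The first-entry path argument gives $P\setminus\overline U\subseteq\bigcup_j S_j$, hence $\FR\subseteq\overline U$. Both routes rest only on the Jordan curve theorem and are of similar length. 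Yours never identifies $\partial\FR$ directly; the paper's needs no path argument.

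Three steps are announced but not argued, and each deserves a line.
\begin{enumerate}
\item $U\subseteq P$: the exterior of $P$ is connected, unbounded, and disjoint from $\Gamma$, so it lies in the unbounded component of $\mathbb{R}^2\setminus\Gamma$.
\item $U\cap D_i=\emptyset$: $\operatorname{int}D_i$ is connected and disjoint from $\Gamma$. It also contains the interior of your outer sector $S'$, which is nonempty since $2\pi-\theta_i>0$ and lies outside $P$, hence in the unbounded component of $\mathbb{R}^2\setminus\Gamma$. Points of $C_i$ lie either on $\mathcal{A}_i\subseteq\Gamma$ or outside $P$. This is exactly the observation the paper uses for connectedness of the complement, so the two proofs share their one essential geometric input.
\item The path argument. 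The path exists because $x$ lies in the unbounded component of $\mathbb{R}^2\setminus\Gamma$, which is path-connected and meets the exterior of $P$; stop it at its first point on $\partial P$. You should also say why the first point $y$ at which the path meets $\bigcup_j S_j$ cannot lie on a radius of some $S_i$. Apart from the points $t_j\in\Gamma$, every point of such a radius is at distance less than $r_i$ from $v_i$, hence interior to $D_i$. The points of the path just before $y$ lie in $P$, so they would already belong to $D_i\cap P=S_i$.
\end{enumerate}
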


\begin{proof}
	\ref{it:arc} By Lemma~\ref{lem:cover}, the circle $C_i$ meets $\partial P$ only on the two edges incident to $v_i$; on each of these it meets exactly one point, namely the point at distance $r_i$ from $v_i$, which is a tangency point ($t_{i-1}$ or $t_i$) because the edge has length $r_i + r_{i+1} > r_i$ (respectively $r_{i-1}+r_i > r_i$). A circle crossing the boundary of $P$ exactly twice intersects $P$ in a single arc, namely the one emanating from the interior angle at $v_i$.

	\ref{it:jordan} Non-consecutive arcs are disjoint because the corresponding disks are; consecutive arcs share exactly the tangency point of their disks. The concatenation $\mathcal{A}_1\mathcal{A}_2\cdots \mathcal{A}_N$ is therefore a simple closed curve $\gamma$. To see $\partial\FR \subseteq \gamma$, let $x \in \partial\FR$. Since $P \setminus (D_1 \cup \cdots \cup D_N) \subseteq \FR$, the complement of $\FR$ is covered by the exterior of $P$ together with the disks; hence $x$, lying in the closure of this complement as well as in $\FR \subseteq P$, belongs to $\partial P \cup D_1 \cup \cdots \cup D_N$, and therefore to some disk $D_i$, the boundary $\partial P$ being itself covered by the disks (Lemma~\ref{lem:cover}). On the other hand, $x$ lies in no open disk, the open disks being disjoint from $P \setminus (D_1 \cup \cdots \cup D_N)$ and hence from its closure $\FR$. Thus $x \in C_i \cap P = \mathcal{A}_i \subseteq \gamma$. Conversely, each point of an arc $\mathcal{A}_i$ is a limit both of points of $\operatorname{int} D_i$ (removed from $\FR$) and of free points, so $\gamma \subseteq \partial\FR$. Finally, the complement of $\FR$ in the plane is precisely the union of the exterior of $P$ with the open disks $\operatorname{int}D_i$. Indeed, the exterior and the open disks are disjoint from $\FR$, as noted above; conversely, a point of the complement lying in $P$ belongs to some disk $D_i$ by the covering argument above, and not to the circle $C_i$ itself, whose points in $P$ make up the arc $\mathcal{A}_i \subseteq \FR$, so it lies in $\operatorname{int}D_i$. Each open disk is connected and meets the exterior of $P$, because the interior angle of $P$ at $v_i$ is less than $2\pi$, so that $D_i$ is not contained in $P$. The complement is therefore connected, and $\FR$ is the closed Jordan domain bounded by $\gamma$. In particular $\FR$ is simply connected, and it is not a disk. Its boundary consists of $N$ real-analytic pieces. At $t_i$ the two arcs $\mathcal{A}_i$ and $\mathcal{A}_{i+1}$ are tangent to the common tangent line of $C_i$ and $C_{i+1}$ and curve away from $\FR$ on either side, so the interior angle of $\FR$ at $t_i$ is zero.

	\ref{it:sector} Inside $D_i$, each of the two edges incident to $v_i$ is a radius---it emanates from the center and leaves the disk at distance $r_i$, the edge being longer than $r_i$. The two radii cut $D_i$ into two closed circular sectors, and by Lemma~\ref{lem:cover} these radii are the only portions of $\partial P$ inside $D_i$; hence the interior of each sector, being connected and disjoint from $\partial P$, lies entirely inside or entirely outside $P$, and near $v_i$ the sector spanning the interior angle is the one inside. Thus $D_i \cap P$ is that closed sector, and it contains the radial segment $\overline{v_ix}$ for each of its points $x$.
\end{proof}

We now analyze the medial axis $\MA = \MA(\FR)$ of the free region. In the following, the tree $\MA$ is understood combinatorially. Vertices of degree two, which merely subdivide edges, are suppressed, and by a \emph{vertex} we mean a leaf or a point of degree at least three. This convention affects neither the leaves nor the components of $\MA \setminus \{p\}$ at any vertex $p$.

\begin{lemma}\label{lem:tree}
	Let $\MA = \MA(\FR)$. Then:
	\begin{enumerate}
		\item\label{it:leaves} $\MA$ is a finite tree whose leaves are precisely the tangency points $t_1,\dots,t_N$;
		\item\label{it:internal} every non-leaf vertex $p$, of degree $k$, lies in the interior of $\FR$, and its maximal disk $\mathcal{D}_p$ has positive radius $\rho$ and touches exactly $k$ of the circles, say $C_{i_1},\dots,C_{i_k}$ with $i_1 < \cdots < i_k$ (the subscript $j$ of $i_j$ understood cyclically with respect to $k$), each at a single point $q_{i_j}$ collinear with $p$ and $v_{i_j}$. In particular, the circle $\partial \mathcal{D}_p$ is admissible, touching exactly the circles $C_{i_1},\dots,C_{i_k}$;
		\item\label{it:branches} with notation as in \ref{it:internal}, the components $\MA_1,\dots,\MA_k$ of $\MA \setminus \{p\}$ can be labeled so that the leaves lying in $\MA_j$ are precisely $t_{i_j}, t_{i_j+1}, \dots, t_{i_{j+1}-1}$; their number is the gap $g_j = i_{j+1}-i_j$.
	\end{enumerate}
\end{lemma}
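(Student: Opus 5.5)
We combine Lemma~\ref{lem:domain}, which places $\FR$ in the class of Theorem~\ref{thm:ccm}, with an explicit description of maximal disks of $\FR$ in terms of the signed distances $d_i$. The basic observation is this. For $p$ in $\FR$, the largest disk centered at $p$ inside $\FR$ has radius $\rho(p)=\min_i d_i(p)$. It touches $\partial\FR$ exactly at the points where the ray from $v_i$ through $p$ meets $C_i$, for those $i$ attaining the minimum. To justify this, let $q_i$ be the point of $C_i$ on the segment $\overline{v_ip}$. By Lemma~\ref{lem:domain}\ref{it:sector}, $q_i$ lies on the arc $\mathcal{A}_i$, since the segment from $v_i$ to $p\in P$ stays in $P$ (using the sector description). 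Hence the distance from $p$ to $\mathcal{A}_i$ equals $d_i(p)$, and the nearest point of $\mathcal A_i$ is unique. So the contact set of any maximal disk is a union of single points $q_i$, one per circle attaining the minimum. The only exceptions are the cusps $t_i$, where the disk degenerates and $\rho=0$.

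\ref{it:internal}. Let $p$ be a non-leaf vertex, of degree $k\ge 3$ by our convention. By Theorem~\ref{thm:ccm}\ref{ccm:degree}, the contact set has $k$ components. Each component lies on some arc $\mathcal{A}_i$ and, by the observation above, is the single point $q_i$. Distinct components therefore come from distinct circles, because each circle contributes exactly one nearest point. Thus $\mathcal D_p$ touches exactly $k$ circles at collinear points $q_{i_j}$ on $\overline{pv_{i_j}}$. If $\rho=0$, then $p$ would be a boundary point lying in two disks, hence a tangency point $t_i$; but the maximal disk there is degenerate with one contact component, so $p$ is a leaf. Hence $\rho>0$ and $p\in\operatorname{int}\FR\subseteq\operatorname{int}P$. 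Since $d_i(p)\ge\rho$ with equality exactly for the $k$ touched circles, the circle $\partial\mathcal D_p$ is admissible.

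\ref{it:leaves}. Theorem~\ref{thm:ccm}\ref{ccm:graph} gives a finite tree. By \ref{ccm:leaf}, a leaf is either a sharp corner, that is some $t_i$ by Lemma~\ref{lem:domain}\ref{it:jordan}, or the center of a disk osculating $\partial\FR$. Osculation is impossible: every boundary arc $\mathcal A_i$ curves away from $\FR$, i.e.\ it is concave as seen from $\FR$. A disk inside $\FR$ touching $C_i$ externally has positive radius, while the osculating radius would have to equal $-r_i$. Conversely, each $t_i$ is a sharp corner and hence in $\MA$. A maximal disk near $t_i$ touches only $C_i$ and $C_{i+1}$, so $t_i$ has a single contact component and degree one.

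\ref{it:branches}. This is the main obstacle. The plan is to use the map $\beta$ of Theorem~\ref{thm:ccm}\ref{ccm:beta}. Removing $q_{i_1},\dots,q_{i_k}$ from the Jordan curve $\partial\FR$ leaves $k$ open arcs. The $j$-th arc $\gamma_j$ runs from $q_{i_j}$ to $q_{i_{j+1}}$ and contains exactly the cusps $t_{i_j},\dots,t_{i_{j+1}-1}$. The set $\gamma_j$ minus its corners is a union of connected pieces, and $\beta$ is continuous on each piece. As one approaches a cusp $t_i$, $\beta\to t_i$, so $\beta(\gamma_j)\cup\{\text{cusps in }\gamma_j\}$ is connected. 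We must also show that $p\notin\beta(\gamma_j)$: any $q$ with $\beta(q)=p$ lies in the contact set of $\mathcal D_p$, which is $\{q_{i_1},\dots,q_{i_k}\}$, and none of these points lies in $\gamma_j$. Thus this connected set lies in a single component of $\MA\setminus\{p\}$. Since every point of $\MA$ is $\beta$ of some boundary point, the $k$ arcs cover all components of $\MA\setminus\{p\}$. There are exactly $k$ components, so the assignment is a bijection once we check that distinct arcs land in distinct components. For this, approach $p$ from the contact point: as $q\to q_{i_j}$ from either side, $\beta(q)\to p$ along distinct branches. Each branch at $p$ corresponds to one of the $k$ gaps between consecutive contact components, by the local structure in Choi--Choi--Moon's Section 8.2. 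Labeling the component containing $\beta(\gamma_j)$ as $\MA_j$ then gives the claimed leaves, and their count is $g_j$.
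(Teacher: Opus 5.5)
Your route is the paper's. Part (1) uses the leaf alternative of Theorem~\ref{thm:ccm}, with osculation excluded by the sign of curvature. Part (2) uses ``degree $=$ number of contact components'' together with a nearest-point analysis. Part (3) uses the map $\beta$ on the $k$ boundary stretches. Your surjectivity step in (3), that every point of $\MA\setminus\{p\}$ is $\beta$ of a point of some $\gamma_j$, replaces the paper's observation that every component contains a leaf, and is acceptable once the second gap below is filled. Two steps, however, are not established as written.

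\textbf{First gap: misuse of the sector lemma.} Lemma~\ref{lem:domain}\ref{it:sector} gives $\overline{v_ix}\subseteq P$ only for $x\in D_i\cap P$. It says nothing about $\overline{v_ip}$ for $p\in\FR$, which lies outside $D_i$. Since $P$ need not be convex, $p$ can be hidden from $v_i$ behind a reflex vertex. The direction from $v_i$ to $p$ can then leave the interior angle at $v_i$, so the nearest point of $C_i$ lies outside $P$ and $\operatorname{dist}(p,\mathcal{A}_i)>d_i(p)$. Your claim is therefore false for general $i$. You only need it for circles attaining $\rho=\min_i d_i(p)$, and there it holds for another reason. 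The open disk of radius $\rho$ about $p$ misses every $D_i$, hence misses $\partial P$ by Lemma~\ref{lem:cover}, hence lies in $P$. So $q_i\in C_i\cap P=\mathcal{A}_i$. The paper avoids the issue by arguing from a contact point $q\in\mathcal{A}_i$: $\rho=\dist{pq}\ge d_i(p)\ge\rho$.

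\textbf{Second gap: contact at a cusp is never excluded.} You never prove that a maximal disk with $\rho>0$ cannot make contact at a cusp $t_i$; you only assert that cusps are ``exceptions'' where $\rho=0$. This is exactly what ``touches exactly $k$ circles'' needs, because a contact component $\{t_i\}$ lies on both $C_i$ and $C_{i+1}$, and $k$ components would then give $k+1$ touched circles. It is needed twice more in (3):
\begin{itemize}
\item $\gamma_j$ contains exactly the cusps $t_{i_j},\dots,t_{i_{j+1}-1}$ only if no $q_{i_j}$ is a cusp;
\item your surjectivity step needs every non-corner point of $\MA$ to have a non-corner contact point, since $\beta$ is defined only there.
\end{itemize}
The paper's argument: if $\dist{pt_i}=\rho$, then $p$ lies on the ray from $v_i$ through $t_i$ at distance $r_i+\rho$ from $v_i$. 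Hence $d_{i+1}(p)=\lvert\rho-r_{i+1}\rvert-r_{i+1}<\rho$, a contradiction. With your ray description it is even shorter. Both $C_i$ and $C_{i+1}$ would attain the minimum at $t_i$, so $p$ would lie beyond $t_i$ on the ray from $v_i$ and on the ray from $v_{i+1}$. These point in opposite directions, forcing $p=t_i$.

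\textbf{Minor points in (3).} The separate injectivity check is unnecessary. You already have a surjection from the $k$ arcs onto the $k$ components of $\MA\setminus\{p\}$, which is automatically a bijection. The unsupported appeal to the ``local structure'' in Choi--Choi--Moon can simply be dropped. The limit $\beta\to t_i$ at a cusp also needs the paper's one-line reason: near a cusp the inscribed radius is bounded by the width of the wedge between the two tangent disks.
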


\begin{proof}
	\ref{it:leaves} By Lemma~\ref{lem:domain}\ref{it:jordan} and Theorem~\ref{thm:ccm}\ref{ccm:graph}, $\MA$ is a finite tree. By Theorem~\ref{thm:ccm}\ref{ccm:leaf}, a leaf of $\MA$ is either a sharp corner of $\partial\FR$---that is, one of the $t_i$---or the center of a maximal disk osculating the boundary along a single contact component. The latter does not occur. At any point $q$ in the interior of an arc $\mathcal{A}_i$, the osculating circle of the boundary is $C_i$ itself, whose disk lies outside $\FR$; a maximal disk of $\FR$ tangent to $\mathcal{A}_i$ at $q$ lies on the opposite side of the common tangent line, so it cannot osculate the arc, and a contact along a subarc of $\mathcal{A}_i$ would force the maximal disk to coincide with $D_i$. Conversely, each $t_i$ is a leaf. In a sufficiently small neighborhood of $t_i$, the boundary of $\FR$ consists only of pieces of $\mathcal{A}_i$ and $\mathcal{A}_{i+1}$, so a maximal disk centered there must touch both, placing its center on the single analytic curve $\{\,x : d_i(x)=d_{i+1}(x)\,\}$---one branch of a hyperbola with foci $v_i$ and $v_{i+1}$, passing through $t_i$. Hence exactly one edge of $\MA$ ends at $t_i$.

	\ref{it:internal} A non-leaf vertex $p$, of degree $k$, is not a corner of $\partial\FR$, since corners are leaves by part~\ref{it:leaves}; and a maximal disk centered on $\partial\FR$ has radius zero, so $p \in \operatorname{int}\FR$ and $\rho = \min_i d_i(p) > 0$. By Theorem~\ref{thm:ccm}\ref{ccm:degree}, the contact set of $p$ has exactly $k$ components. We claim no contact point is a corner $t_i$. If $\dist{pt_i} = \rho$, then, since $t_i \in C_i$,
	\[
		d_i(p) \;\le\; \dist{pt_i} + \dist{t_iv_i} - r_i \;=\; \rho,
	\]
	while $\mathcal{D}_p \subseteq \FR$ forces $d_i(p) \ge \rho$; so equality holds throughout, and $p$ lies on the ray from $v_i$ through $t_i$---that is, on the line of the edge $\overline{v_iv_{i+1}}$---at distance $r_i + \rho$ from $v_i$. But then, the edge having length $r_i + r_{i+1}$, we get $\dist{pv_{i+1}} = \lvert (r_i+\rho) - (r_i+r_{i+1}) \rvert = \lvert \rho - r_{i+1} \rvert$, and so $d_{i+1}(p) = \lvert \rho - r_{i+1} \rvert - r_{i+1} < \rho$, contradicting $\mathcal{D}_p \subseteq \FR$. Thus every contact point lies in the interior of some arc $\mathcal{A}_i$. For such a contact point $q \in \mathcal{A}_i$ we have $\rho = \dist{pq} \ge d_i(p) \ge \rho$, so $q$ is the point of $C_i$ nearest to $p$, which is unique and collinear with $p$ and $v_i$. In particular each circle carries at most one contact point, the $k$ contact components are single points $q_{i_1},\dots,q_{i_k}$ on $k$ distinct circles, and $d_i(p) \ge \rho$ for every $i$ with equality exactly at the touched circles.

	\ref{it:branches} Let $S_j \subset \partial\FR$ denote the stretch of the boundary running from $q_{i_j}$ to $q_{i_{j+1}}$ in the cyclic order, with the two endpoints excluded; walking from $q_{i_j} \in \mathcal{A}_{i_j}$ to $q_{i_{j+1}} \in \mathcal{A}_{i_{j+1}}$, one passes exactly the corners $t_{i_j}, t_{i_j+1}, \dots, t_{i_{j+1}-1}$---$g_j$ of them in total. Consider the map $\beta$ of Theorem~\ref{thm:ccm}\ref{ccm:beta}, extended to the corners by letting $\beta(t_m) = t_m$; this extension is continuous, since the inscribed radius at a boundary point tends to zero as the point approaches a cusp, being bounded by the width of the wedge between the two tangent disks. Thus $\beta(S_j)$ is a connected subset of $\MA$. Moreover, $\beta$ never takes the value $p$ on $S_j$. If $\beta(q) = p$ for a boundary point $q$, then $q$ lies in the contact set of $p$, that is, $q$ is one of the points $q_{i_1},\dots,q_{i_k}$---and these are excluded from $S_j$. The connected set $\beta(S_j)$ therefore avoids $p$ and lies in a single component of $\MA \setminus \{p\}$; call it $\MA_{(j)}$. Now every component $X$ of $\MA \setminus \{p\}$ contains at least one leaf of $\MA$ (walk within $X$ away from $p$ as far as possible), every leaf $t_m$ lies in exactly one stretch $S_j$ (contact points are not corners), and $t_m = \beta(t_m) \in \MA_{(j)}$. Hence $j \mapsto \MA_{(j)}$ is a surjection from $k$ stretches onto the $k$ components of $\MA \setminus \{p\}$, and is therefore a bijection; and the leaves in $\MA_{(j)}$ are exactly those in $S_j$.
\end{proof}

The combinatorial heart of the proof, and its only non-geometric ingredient, is the following elementary fact about trees, which we call the \emph{tree lemma}.

\begin{lemma}[tree lemma]\label{lem:treelemma}
	Every finite tree $T$ with $L \ge 5$ leaves has a vertex $p$ of degree at least three such that every component of $T \setminus \{p\}$ contains at most $L-3$ leaves of $T$.
\end{lemma}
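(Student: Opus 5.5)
The plan is a greedy walk through the tree, the same one used to find a centroid. Throughout, vertices of degree two are suppressed, as in the convention before Lemma~\ref{lem:tree}, so every vertex is a leaf or has degree at least three. For a vertex $p$ of degree at least three, call a component of $T \setminus \{p\}$ \emph{heavy} if it contains at least $L-2$ leaves of $T$. The goal is a vertex with no heavy component.

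First, a heavy component is unique when it exists. The components of $T\setminus\{p\}$ partition the leaves of $T$, so two heavy components would need $2(L-2) \le L$ leaves, that is $L \le 4$. This contradicts $L \ge 5$. It follows that all components other than the heavy one together contain at most $2$ leaves.

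Next, a non-leaf vertex exists to start the walk. A tree with at least $3$ leaves is not a path, so it has a vertex of degree at least three; call it $p_0$. If $p_0$ has no heavy component we are done. Otherwise let $X$ be its heavy component, and let $q$ be the neighbour of $p_0$ in $X$, meaning the next vertex along the edge from $p_0$ into $X$.

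\textbf{Claim 1: $q$ is not a leaf.} If it were, then $X=\{q\}$ together with the open edge, so $X$ would contain exactly one leaf. That would give $L-2\le 1$, impossible for $L \ge 5$. So $q$ has degree at least three.

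\textbf{Claim 2: $q$ cannot be heavy on the side of $p_0$.} The component of $T\setminus\{q\}$ containing $p_0$ is exactly $T$ minus the closure of the part of $X$ beyond $q$. Its leaves are therefore precisely the leaves of $T$ not in $X$, of which there are at most $2$. Since $L\ge5$, we have $2 \le L-3$, so this component is not heavy. Hence either $q$ has no heavy component, and we stop, or its heavy component lies on the far side of $q$ away from $p_0$.

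In the second case we repeat the step from $q$, passing to its neighbour inside its heavy component. By Claim 2 applied at each step, every move continues along a path directed away from all previously visited vertices: the heavy component at the new vertex never contains the vertex we came from. A path in a tree never revisits a vertex, and $T$ is finite, so the walk must stop. By Claim 1, at every stage the vertex reached is internal. The walk can only stop at an internal vertex $p$ with no heavy component, meaning every component of $T\setminus\{p\}$ has at most $L-3$ leaves, which is the required vertex.

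The main obstacle is the bookkeeping in Claim 2, namely identifying the leaves of the back component at $q$ with the leaves outside $X$. This is where the suppression of degree-two vertices matters, because it ensures the neighbour $q$ is an honest vertex. Note that the hypothesis $L\ge5$ is used exactly twice: once to rule out $q$ being a leaf (Claim 1), and once to get $2\le L-3$ (Claim 2).
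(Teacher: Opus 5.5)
Your proof is correct, and it argues differently from the paper. The paper splits into cases. A vertex of degree $k \ge 4$ is automatically good, since each of its $k$ components holds a leaf, leaving at most $L-(k-1) \le L-3$ for any one of them. If all degrees are at most three, the handshake count gives $L-2$ branch vertices, and a pigeonhole on \emph{attached} leaves (those reached without passing another branch vertex) yields a branch vertex with at most one attached leaf; each of its three components then has at least three leaves outside it.

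You instead run a centroid descent. A component with at least $L-2$ leaves is unique, and stepping into it leaves behind only the at most $2 \le L-3$ leaves outside it. So the walk never backtracks and halts at an internal vertex with no heavy component.

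Your route needs neither the case split nor the degree count. It also works verbatim with ``heavy'' meaning ``more than $L/2$ leaves'', which gives a stronger conclusion: some vertex of degree at least three has at most $\lfloor L/2\rfloor \le L-3$ leaves in every component. In Theorem~\ref{thm:main} all gaps could therefore even be made at most $\lfloor N/2\rfloor$. The paper's route instead gives explicit local certificates: every vertex of degree at least four works, and in the cubic case so does every branch vertex with at most one attached leaf.

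Two minor points. First, $L \ge 5$ is also used in your uniqueness step, so it is not used only twice; Claim~2 does not actually need uniqueness, only that $X$ has at least $L-2$ leaves. Second, suppressing degree-two vertices is only a convenience, since a degree-two vertex reached by the walk always has a heavy forward side.
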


\begin{proof}
	Since $L \ge 3$, the tree $T$ is not a path, so it has a vertex of degree at least three.

	First suppose some vertex $p$ has degree $k \ge 4$. The components of $T\setminus\{p\}$ each contain at least one leaf of $T$, and the $L$ leaves of $T$ are distributed among all $k$ of them; hence each component contains at most $L - (k-1) \le L-3$ leaves of $T$.

	Otherwise, every vertex has degree at most $3$; call those of degree exactly $3$ the \emph{branch} vertices. Let $n_1 = L$, $n_2$, and $n_3$ denote the numbers of vertices of degree $1$, $2$, and $3$, respectively. Counting degrees over the edges in two ways gives $2(n_1 + n_2 + n_3 - 1) = n_1 + 2n_2 + 3n_3$, so $n_3 = n_1 - 2$, the count $n_2$ cancelling.
	
	For a branch vertex, we say that a leaf is \emph{attached} to it, if there is no other branch vertex in between them. If every branch vertex had at least two attached leaves, then $n_1 \ge 2n_3 = 2n_1 - 4$, forcing $n_1 \le 4$ and contradicting $L \ge 5$. Hence some branch vertex $p$ has at most one attached leaf. 
	
	Each component of $T \setminus \{p\}$ either contains no branch vertex---being then a path from $p$ to the attached leaf, and there is at most one such component---or contains a branch vertex, and with it at least two leaves of $T$. Therefore, for each component, the other two components together contain at least $1+2 = 3$ leaves of $T$, so the component itself contains at most $L-3$.
\end{proof}

\begin{lemma}\label{lem:split}
	Let $p$ be a non-leaf vertex of $\MA$, of degree $k$, touching $C_{i_1},\dots,C_{i_k}$ as in Lemma~\ref{lem:tree}\ref{it:internal}, and add the circle $C = \partial \mathcal{D}_p$. Then the $k$ segments $\overline{pv_{i_j}}$ lie in $P$, meet one another only at $p$, and subdivide $P$ into $k$ simple polygons $P_1,\dots,P_k$, where $P_j$ has vertices $p, v_{i_j}, v_{i_j+1}, \dots, v_{i_{j+1}}$. Together with the added circle and the circles at its vertices, each $P_j$ is a $(g_j+2)$-gon of touching circles.
\end{lemma}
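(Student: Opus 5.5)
First I will show that each segment $\overline{pv_{i_j}}$ lies in $P$. The contact point $q_{i_j}$ is collinear with $p$ and $v_{i_j}$ by Lemma~\ref{lem:tree}\ref{it:internal}, and it lies on the arc $\mathcal{A}_{i_j}$, so the segment splits as $\overline{pq_{i_j}} \cup \overline{q_{i_j}v_{i_j}}$. The first piece is a radius of $\mathcal{D}_p \subseteq \FR \subseteq P$. The second piece lies in $P$ by Lemma~\ref{lem:domain}\ref{it:sector}, since $q_{i_j} \in D_{i_j} \cap P$.

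To see that distinct segments meet only at $p$, I will use two facts. First, the open radii of $\mathcal{D}_p$ are pairwise disjoint. Second, the pieces $\overline{q_{i_j}v_{i_j}}$ lie in the disks $D_{i_j}$, which are pairwise disjoint because tangent disks meet only at $t_i$, and $t_i$ is not a contact point. A radius piece $\overline{pq_{i_j}}$ can meet the other disks only in their boundary, since $\mathcal{D}_p\subseteq\FR$ avoids the open disks. By the contact analysis, the closed disk $\mathcal{D}_p$ meets $D_m$ only at $q_m$ when $m$ is touched, and not at all otherwise. So the only shared point is $p$.

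Next I will check that each segment meets $\partial P$ only at $v_{i_j}$. The radius piece lies in $\FR$, and $\FR$ meets $\partial P$ only at the points $t_i$ by Lemma~\ref{lem:domain}. The radius of $\mathcal{D}_p$ does not reach a $t_i$, since $t_i$ is not a contact point and the interior of the radius lies in $\operatorname{int}\FR$. The piece inside $D_{i_j}$ meets $\partial P$ only along the two radii at $v_{i_j}$, by Lemma~\ref{lem:cover}. It can lie on one of these radii only if $q_{i_j}$ equals $t_{i_j-1}$ or $t_{i_j}$, which is excluded.

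So we have $k$ chords from the interior point $p$ to distinct boundary vertices that are otherwise interior and pairwise disjoint. A standard Jordan-curve argument (the $\theta$-graph argument, applied inductively) then shows they cut $P$ into $k$ simple polygons. The polygon $P_j$ is bounded by $\overline{pv_{i_j}}$, the boundary path $v_{i_j},\dots,v_{i_{j+1}}$, and $\overline{v_{i_{j+1}}p}$. I expect this topological step, in particular pinning down the cyclic order of the regions around $p$, to be the main obstacle. I would handle it by matching the angular order of the $q_{i_j}$ on $\partial\mathcal{D}_p$ with their order along $\partial\FR$, which is the same boundary order used in Lemma~\ref{lem:tree}\ref{it:branches}. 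When $k=3$ and a gap is $1$, $P_j$ is a triangle, and simplicity is still immediate.

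Finally I will verify Definition~\ref{def:ring} for $P_j$, with the added circle $C$ at $p$ and the circles $C_{i_j},\dots,C_{i_{j+1}}$.
\begin{enumerate}
\item Condition~\ref{it:polygon} is the simplicity just established.
\item For condition~\ref{it:tangent}, the old edges keep their tangencies. The new edges have length $\dist{pv_{i_j}} = \rho + r_{i_j}$, because $C$ touches $C_{i_j}$.
\item For condition~\ref{it:disjoint}, pairs among the old circles inherit disjointness from the original configuration. For a pair $C$, $C_m$ with $i_j<m<i_{j+1}$, we have $d_m(p) > \rho$, since $C$ touches only $C_{i_1},\dots,C_{i_k}$.
\end{enumerate}
Two degenerate cases need separate care. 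If $g_j=1$, the two circles $C_{i_j}$ and $C_{i_j+1}$ are consecutive in both polygons, so nothing extra is required. If $k=2$, there would be a non-consecutive pair $C_{i_1},C_{i_2}$; this case cannot occur because $k\ge3$.

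The vertex count of $P_j$ is $1+(g_j+1)=g_j+2$.
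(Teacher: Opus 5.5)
Your proof is correct and follows the paper's argument: you split each $\overline{pv_{i_j}}$ at $q_{i_j}$ into a radius of $\mathcal{D}_p$ and a radial segment of the sector $D_{i_j}\cap P$, and you verify the conditions of Definition~\ref{def:ring} exactly as the paper does, using $d_m(p)>\rho$ for the untouched circles. Your extra checks, that the segments are pairwise disjoint away from $p$ and meet $\partial P$ only at their endpoints, fill in details the paper treats briefly. The step you flag as the main obstacle is lighter than you expect: the inductive cross-cut argument needs only the cyclic order of $v_{i_1},\dots,v_{i_k}$ on $\partial P$, which is given by $i_1<\cdots<i_k$, and the angular order at $p$ then follows without further work.
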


\begin{proof}
	By Lemma~\ref{lem:tree}\ref{it:internal}, the points $p$, $q_{i_j}$, $v_{i_j}$ are collinear, so $\overline{pv_{i_j}} = \overline{pq_{i_j}} \cup \overline{q_{i_j}v_{i_j}}$ is a straight segment; the first part is a radius of $\mathcal{D}_p \subseteq \FR \subseteq P$, and the second lies in $P$ by Lemma~\ref{lem:domain}\ref{it:sector}. Being straight segments emanating from the common interior point $p$ toward distinct vertices of $P$, they meet only at $p$, and since their endpoints $v_{i_1},\dots,v_{i_k}$ occur on $\partial P$ in this cyclic order, the segments subdivide $P$ into $k$ simple polygons with the stated vertex lists. In $P_j$, consecutive circles among $C_{i_j},\dots,C_{i_{j+1}}$ are tangent as before; the new edges satisfy $\dist{pv_{i_j}} = \rho + r_{i_j}$ and $\dist{pv_{i_{j+1}}} = \rho + r_{i_{j+1}}$, so $C$ is tangent to the circles at both ends; and all remaining pairs in $P_j$ are disjoint---pairs of old circles by Definition~\ref{def:ring}\ref{it:disjoint}, and $C$ from any old circle $C_m$ with $i_j < m < i_{j+1}$ because $d_m(p) > \rho$ strictly, as $C_m$ is not among the touched circles. Hence $P_j$ is a polygon of touching circles with $g_j + 2$ vertices.
\end{proof}

\begin{proof}[Proof of Theorem~\ref{thm:main}]
	Let $N > 4$. By Lemma~\ref{lem:tree}\ref{it:leaves}, the medial axis $\MA$ of the free region is a finite tree with $N \ge 5$ leaves. By the tree lemma (Lemma~\ref{lem:treelemma}), $\MA$ has a non-leaf vertex $p$ such that every component of $\MA\setminus\{p\}$ contains at most $N-3$ leaves. Lemma~\ref{lem:tree}\ref{it:internal} then provides an admissible circle $C = \partial \mathcal{D}_p$, and by Lemma~\ref{lem:tree}\ref{it:branches} its gaps satisfy $g_j \le N-3$ for all $j$. By Lemma~\ref{lem:split}, adding $C$ subdivides $P$ into polygons of touching circles with $g_j + 2 \le N-1$ sides each.
\end{proof}

Since each polygon produced by the subdivision is again a polygon of touching circles, the reduction can be iterated. The following quantitative consequence is worth recording.

\begin{corollary}\label{cor:bound}
	Every $N$-gon of touching circles can be subdivided into triangles and quadrilaterals of touching circles by adding at most $N-4$ circles.
\end{corollary}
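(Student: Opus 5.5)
Induction on $N$, proving the stronger statement that an $N$-gon of touching circles with $N \ge 3$ can be subdivided into triangles and quadrilaterals of touching circles using at most $\max(N-4,0)$ added circles.

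\textbf{Base case.} For $N \le 4$ nothing needs to be added, so the bound holds trivially.

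\textbf{Inductive step.} For $N \ge 5$, the proof of Theorem~\ref{thm:main} supplies a non-leaf vertex $p$ of $\MA$ of degree $k \ge 3$. By Lemma~\ref{lem:split}, adding the circle $C = \partial\mathcal{D}_p$ subdivides $P$ into $k$ polygons of touching circles $P_1,\dots,P_k$, where $P_j$ has $g_j + 2$ sides, each gap satisfies $1 \le g_j \le N-3$, and $\sum_j g_j = N$ by \eqref{eq:gap}. Every $P_j$ has fewer than $N$ sides, so the inductive hypothesis applies to each of them. The total number of added circles is therefore at most
\[
	1 + \sum_{j=1}^{k} \max(g_j - 2, 0).
\]

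\textbf{The count.} We must show this is at most $N-4$. Note that $\max(g-2,0) \le g-1$ for every $g \ge 1$, with equality exactly when $g \ge 2$. Hence the sum is at most $\sum_j (g_j - 1) = N - k$. This bound needs sharpening. Split the indices into $J_{\ge 2} = \{j : g_j \ge 2\}$ and $J_1 = \{j : g_j = 1\}$. Then
\[
	\sum_{j} \max(g_j - 2, 0) \;=\; \sum_{j \in J_{\ge 2}} (g_j - 2) \;=\; (N - |J_1|) - 2\,|J_{\ge 2}| \;=\; N - 2k + |J_1|.
\]
So the total is $1 + N - 2k + |J_1| \le N - 4$ if and only if $|J_1| \le 2k - 5$.
\begin{itemize}
	\item For $k \ge 4$ this holds automatically, since $|J_1| \le k \le 2k - 5$.
	\item For $k = 3$ we need $|J_1| \le 1$. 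If two of the gaps equal $1$, the third equals $N - 2 > N - 3$, contradicting the choice of $p$.
\end{itemize}
So the inequality holds in every case, and the induction closes.

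\textbf{Main obstacle.} The delicate point is the $k = 3$ case with a gap equal to $1$. There the gap condition $g_j \le N-3$ guaranteed by the tree lemma is exactly what is needed, because it excludes the "corner" placements with gaps $(1,1,N-2)$. Everything else is routine bookkeeping.
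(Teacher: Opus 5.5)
Your proof is the paper's proof: the same induction on $N$ with the bound $\max(0,N-4)$, the same count $1+N-2k+|J_1|$, and the same use of $g_j\le N-3$ to exclude two unit gaps when $k=3$. However, your treatment of $k\ge 4$ contains a false step.

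The inequality $k\le 2k-5$ holds only for $k\ge 5$. At $k=4$ you need $|J_1|\le 3$, but $|J_1|\le k$ gives only $|J_1|\le 4$. So the configuration with four gaps all equal to $1$ is not covered by your argument.

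The missing observation is that $|J_1|=k$ forces every gap to be $1$, so $N=\sum_j g_j=k$. At $k=4$ this contradicts $N\ge 5$, and in general it gives $2k-|J_1|=k=N\ge 5$. In the remaining case $|J_1|\le k-1$, you get $|J_1|\le k-1\le 2k-5$ for all $k\ge 4$.

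With the case $|J_1|=k$ split off first, as the paper does ($h=k$; then $k=3$; then $k\ge4$ with $h\le k-1$), the induction closes. Note also that the delicate point is therefore not only $k=3$; the all-unit-gaps case needs its own one-line argument too.
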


\begin{proof}
	We argue by induction on $N$ that at most $\max(0, N-4)$ added circles suffice. For $N \le 4$ there is nothing to prove. For $N \ge 5$, Theorem~\ref{thm:main} provides a circle whose addition subdivides the polygon into $k \ge 3$ reduced polygons with $g_j + 2$ sides, where $\sum_j g_j = N$ and $g_j \le N-3$; by the induction hypothesis, each of them can be subdivided further using at most $\max(0, g_j - 2)$ added circles. The total number of added circles is therefore at most
	\[
		1 + \sum_j \max(0, g_j-2) \;=\; 1 + (N - h) - 2(k-h) \;=\; 1 + N - 2k + h,
	\]
	where $h$ is the number of gaps equal to $1$---such gaps contribute nothing, while the remaining $k - h$ gaps sum to $N - h$ and each is shortened by $2$. We claim $2k - h \ge 5$, so that the total is at most $N - 4$. If $h = k$, then every gap is $1$ and $2k - h = k = N \ge 5$. If $k = 3$, then $h \le 1$---two gaps equal to $1$ would force the third to be $N - 2 > N-3$---and $2k - h \ge 5$. And if $k \ge 4$ with $h \le k-1$, then $2k - h \ge k + 1 \ge 5$.
\end{proof}


\section{The explicit construction}
\label{sec:construction}

The proof of Theorem~\ref{thm:main} is constructive, and we record the resulting procedure explicitly as Algorithm~\ref{alg:reduction}. The edges of $\MA$ are, by the proof of Lemma~\ref{lem:tree}, arcs of the bisector curves $\{d_i = d_j\}$, each of which is a branch of a hyperbola with foci $v_i$ and $v_j$, the \emph{Apollonius hyperbola} of the two circles. The medial axis of $\FR$ is thus the portion, inside the free region, of the Apollonius diagram (the additively weighted Voronoi diagram \cite{LeeDrysdale1981}) of the $N$ circles.

\begin{algorithm}[reduction step]\label{alg:reduction}
	\emph{Input:} an $N$-gon of touching circles with $N > 4$. \emph{Output:} an added circle leaving only polygons with at most $N-1$ sides.
	\begin{enumerate}
		\item Maintain a cyclic list of bisector arcs, initialized with the $N$ arcs of consecutive pairs $(C_i, C_{i+1})$, each emanating from its tangency point $t_i$.
		\item While more than three arcs remain, search the adjacent pairs of arcs for a \emph{valid} intersection point $p$. For adjacent arcs of pairs $(C_a, C_b)$ and $(C_b, C_c)$, the two bisector curves cross in at most two points, and such a point $p$ is valid if it lies inside $P$ with $d_m(p) \ge d_a(p)$ for every circle $C_m$ not among the pairs; not every adjacent pair admits a valid point, but some pair always does. Record a valid $p$ as an internal vertex of $\MA$, touching $C_a, C_b, C_c$, and replace the two arcs by the bisector arc of $(C_a, C_c)$ emanating from $p$. When three arcs remain, record their common valid intersection as the last vertex.
		\item For each recorded vertex, read off its gaps from the touched indices by equation~\eqref{eq:gap}; return the maximal disk of any vertex whose gaps are all at most $N-3$, which exists by Theorem~\ref{thm:main}.
	\end{enumerate}
\end{algorithm}

Each merge in step (2) contracts a pair of adjacent leaves of the current tree into their common neighbor, so the loop enumerates the internal vertices of $\MA$ (a vertex of degree $k$ arising as $k-2$ coincident merges) and terminates after at most $N-3$ merges. We justify the two assertions made in step (2). An adjacent pair need not admit a valid point because the two bisector curves may well cross at a point that is not a vertex of $\MA$ at all, which happens when one of the two arcs runs into the arc of a third circle first. The validity test is what detects this. Validity is exactly the condition that the disk at the intersection avoids all other circles, so, as in Lemma~\ref{lem:tree}\ref{it:internal}, the valid points are precisely the internal vertices of $\MA$; at a premature intersection $p$, the intruding third circle is nearer to $p$ than $d_a(p)$, and $p$ fails the test. And some pair always admits a valid point. In the current tree, the second vertex of a longest path has at most one branch that is not a single leaf, so at least two of its leaf branches are cyclically adjacent, and these two arcs intersect validly at that vertex. A straightforward implementation---ours, which produced all the figures in this paper---spends $O(N)$ candidate checks of cost $O(N)$ per merge, hence $O(N^3)$ time in the worst case. Asymptotically better bounds follow from computational geometry. The Apollonius diagram of $N$ circles can be computed in $O(N \log N)$ time by Fortune's sweepline algorithm \cite{Fortune1987}, and step (3) is linear, so a single reduction step costs $O(N \log N)$, and the full subdivision of Corollary~\ref{cor:bound} costs $O(N^2 \log N)$.

Since the motivation for reducing to triangles and quadrilaterals is that those molecules admit direct geometric constructions, it is worth recording that the reduction itself demands nothing beyond straightedge and compass. An internal vertex $p$ of $\MA$ and the radius $\rho$ of its maximal disk satisfy $\dist{pv_i} = \rho + r_i$ for each touched circle $C_i$. For any three of the touched circles, this is the classical tangency problem of Apollonius. Although the equations are quadratic, squaring and subtracting the equation of one touched circle $C_a$ from those of two others cancels all quadratic terms. Writing $\overrightharpoon{x}$ for the position vector of a point $x$ in a fixed Cartesian frame, what remains is the pair of \emph{linear} equations
	\[
		2\bigl(\overrightharpoon{v_i} - \overrightharpoon{v_a}\bigr)\cdot \overrightharpoon{p} + 2(r_i - r_a)\,\rho \;=\; \bigl(\lvert\overrightharpoon{v_i}\rvert^2 - r_i^2\bigr) - \bigl(\lvert\overrightharpoon{v_a}\rvert^2 - r_a^2\bigr),
	\]
	which determine $p$ as an affine function of $\rho$; substituting back into $\dist{pv_a}^2 = (\rho + r_a)^2$ then leaves a single quadratic equation in $\rho$. The center and radius of each added circle therefore lie in a quadratic extension of the field generated, over the rationals, by the center coordinates and radii of the current configuration, and since the validity test in step (2) and the gap counts in step (3) only compare such quantities, the entire subdivision of Corollary~\ref{cor:bound} can be carried out with straightedge and compass. This linearization is classical. It is precisely how Courant and Robbins \cite[pp.~125--127]{CourantRobbins1996} show that the tangency problem of Apollonius is solvable by straightedge and compass, and an elegant explicit construction, via inversion, is given in the same reference, pp.~161--162. Everything above applies verbatim to the inflated configurations of Sections~\ref{sec:rivers} and~\ref{sec:multirivers}, in which $r_i$ is replaced by $r_i + w$ or $r_i + W_i$. (Our implementation computes the vertices from this quadratic in closed form.)


\section{Optimality of the bound}
\label{sec:optimality}


The bound of Corollary~\ref{cor:bound} cannot be improved. In this section, for arbitrarily large $N$, we construct $N$-gons of touching circles that cannot be subdivided into triangles and quadrilaterals by fewer than $N-4$ added circles (Corollary~\ref{cor:tight}). The witnesses form a family of one simple shape---two half-staggered rows of equal circles closed by two small caps---and the plan is equally simple. Subdividing a member of the family leaves, besides triangles and quadrilaterals, only smaller members of the same family, so the conclusion follows by induction. We begin with a bottleneck estimate at the interlocking ends of two rows.

\begin{lemma}[throat lemma]\label{lem:throat}
	Let $D_i$ and $D_j$ be two disks of radius $4$ whose centers $v_i$ and $v_j$ are at distance $4\sqrt5$, and call the part of the segment $\overline{v_iv_j}$ outside the two disks, of length $4\sqrt5 - 8$, their \emph{throat}. Then every disk avoiding the interiors of $D_i$ and $D_j$ and containing a point of the throat has radius at most $\tfrac12$.
\end{lemma}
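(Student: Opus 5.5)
We place coordinates so the segment $\overline{v_iv_j}$ lies on the $x$-axis, symmetric about the origin: $\overrightharpoon{v_i}=(-2\sqrt5,0)$ and $\overrightharpoon{v_j}=(2\sqrt5,0)$. The throat is then the set of points $x=(s,0)$ with $\lvert s\rvert\le 2\sqrt5-4$. Let the disk have center $c=(a,b)$ and radius $\rho$, and suppose it contains such a point $x$. The hypotheses become three inequalities:
\[
	(a+2\sqrt5)^2+b^2 \ge (4+\rho)^2,\qquad (a-2\sqrt5)^2+b^2\ge(4+\rho)^2,\qquad (a-s)^2+b^2\le\rho^2 .
\]
The first two say the disk avoids the open disks $\operatorname{int}D_i$ and $\operatorname{int}D_j$. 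The third says $x$ lies in the disk. The goal is to eliminate $a$ and $b$ and obtain a bound on $\rho$ alone.

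First we subtract the third inequality from each of the first two. This cancels $b^2$ and every $\rho^2$ term, leaving two inequalities linear in $a$ and $\rho$:
\[
	(2\sqrt5+s)\bigl(2\sqrt5-s+2a\bigr)\ge 16+8\rho,\qquad (2\sqrt5-s)\bigl(2\sqrt5+s-2a\bigr)\ge 16+8\rho .
\]
Each factor $2\sqrt5\pm s$ is positive, since $\lvert s\rvert<2\sqrt5$. Either inequality alone still depends on $a$, because the center may drift toward one disk.

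The key step is to combine the two inequalities so that $a$ cancels. We multiply the first by $2\sqrt5-s$ and the second by $2\sqrt5+s$, both positive, and add. The $a$-terms cancel, and the left side becomes $(20-s^2)\cdot 4\sqrt5$. The right side is $(16+8\rho)\cdot 4\sqrt5$. Hence $20-s^2\ge 16+8\rho$, that is,
\[
	\rho\le\frac{4-s^2}{8}\le\frac12 .
\]
This gives the claimed bound.

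Finding the right weights is the only nontrivial step. The naive estimate via $\dist{cv_i}+\dist{cv_j}$ and the triangle inequality loses too much: it only yields $4\sqrt5\ge 8$, which bounds nothing. As a sanity check on sharpness, the extremal case is $s=0$ with the center on the perpendicular bisector, $c=(0,\tfrac12)$ and $\rho=\tfrac12$. There $20+\tfrac14=(4.5)^2$ holds with equality, so the constant $\tfrac12$ cannot be improved.
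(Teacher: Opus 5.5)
Your proposal is correct and follows essentially the paper's route: both subtract the containment inequality from the two avoidance inequalities to get constraints linear in the along-axis coordinate, then take a weighted sum to cancel it. Your multipliers $2\sqrt5\mp s$ are, up to the common factor $\delta_i\delta_j$, the paper's division by $\delta_i$ and $\delta_j$. Your closing step $\delta_i\delta_j = 20-s^2\le 20$ is equivalent to the paper's AM--HM inequality and gives the slightly sharper bound $\rho\le(4-s^2)/8$ together with the sharpness example.
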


\begin{proof}
	Let the disk have center $p$ and radius $\rho$, and let $q$ be a point of the throat inside it. Write $\delta_i = \dist{qv_i}$ and $\delta_j = \dist{qv_j}$, so that $\delta_i, \delta_j \ge 4$ and $\delta_i + \delta_j = 4\sqrt5$. Work in Cartesian coordinates with origin $v_i$ and first axis in the direction of $v_j$, so that $q = (\delta_i, 0)$, and let $\alpha$ denote the first coordinate of $p$. The second coordinates of $\overrightharpoon{p} - \overrightharpoon{v_i}$ and $\overrightharpoon{p} - \overrightharpoon{q}$ coincide, so $\dist{pv_i}^2 - \alpha^2 = \dist{pq}^2 - (\alpha - \delta_i)^2$, that is, $\dist{pv_i}^2 = \dist{pq}^2 + 2\delta_i \alpha - \delta_i^2$. Combining this with the constraints $(4+\rho)^2 \le \dist{pv_i}^2$ and $\dist{pq} \le \rho$ gives $(4+\rho)^2 \le \rho^2 + 2\delta_i\alpha - \delta_i^2$, that is,
	\[
		\alpha \ge \frac{16 + 8\rho}{2\delta_i} + \frac{\delta_i}2,
		\quad\text{and symmetrically}\quad
		4\sqrt5 - \alpha \ge \frac{16+8\rho}{2\delta_j} + \frac{\delta_j}2.
	\]
	Adding and using $1/\delta_i + 1/\delta_j \ge 4/(\delta_i+\delta_j)$,
	\[
		4\sqrt5 \;\ge\; (8 + 4\rho)\left(\frac1{\delta_i} + \frac1{\delta_j}\right) + \frac{\delta_i+\delta_j}{2}
		\;\ge\; \frac{8+4\rho}{\sqrt5} + 2\sqrt5,
	\]
	so $\rho \le \tfrac12$. \qedhere
\end{proof}

Call a polygon of touching circles a \emph{two-row configuration} if its circles are as follows. Two \emph{rows} of radius-$4$ circles, tangent consecutively within each row, have their centers on two parallel lines at distance $8$, and the two rows are offset against each other by $4$---half the center spacing. At each end, the rows terminate in a pair of \emph{extreme} circles, one per row, whose centers we require to be at distance $\sqrt{4^2 + 8^2} = 4\sqrt5$; the rows thus overhang each other by exactly $4$ at each end, and their lengths differ by at most one circle. Finally, at each end sits a \emph{cap}, a circle tangent to the two extreme circles of that end, whose disk avoids their throat. The component of $\operatorname{int} P$ minus the two extreme disks and the throat that contains the cap is called the \emph{end zone} of that end, and we require that the closure of each end zone meet no circle other than its cap and its two extremes.

The end zone is always contained in the triangle spanned by the two extreme centers and the center of the cap. Indeed, the boundary of $P$ near the end consists of the two edges joining the cap center to the two extreme centers, which are covered by their tangent disks (Lemma~\ref{lem:cover}), while the throat lies on the segment joining the extreme centers.

For $k \ge 5$, we write $T_k$ for the special case of a two-row configuration with $k$ circles in total in which both caps have radius exactly $1$. The general family, whose caps are only required to satisfy the conditions above, is what the induction below runs on; $T_k$ is its canonical member, and is determined up to isometry. Indeed, the $k-2$ row circles split into two rows whose lengths differ by at most one, and a cap of radius $1$, being tangent to the two extreme disks of its end, has its center at distance exactly $\sqrt{3^2+4^2} = 5 = 4 + 1$ from the two extreme centers---on the far side of their segment from the rows, the near side being excluded by the end-zone requirement. Placing the row centers at the points $(8i, 0)$ and $(8i+4, 8)$, all the remaining requirements hold---two circles of opposite rows are at distance at least $4\sqrt5 > 8$, and each end-zone triangle is at distance greater than $7$ from every center outside its end, so the end zones indeed meet no other circles. Thus $T_k$ is a $k$-gon of touching circles; Figure~\ref{fig:tight} shows $T_9$.

\begin{figure}[tb]
	\centering
	\begin{tikzpicture}
		\node[anchor=south west, inner sep=0] (img) at (0,0) {\includegraphics[width=0.72\textwidth]{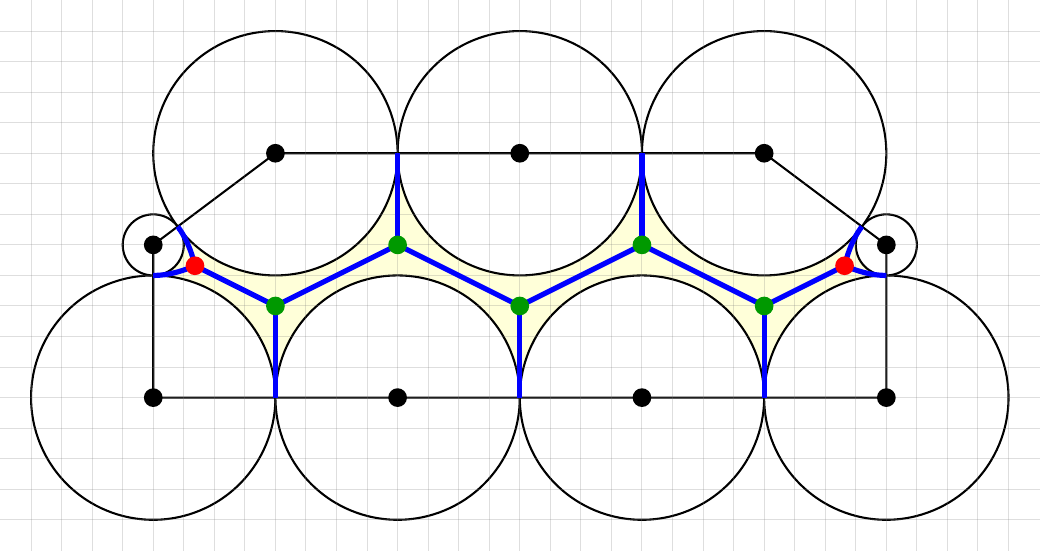}};
		\begin{scope}[x={(img.south east)}, y={(img.north west)}, line width=0.5pt]
			\footnotesize
			\node[left] at (0.078,0.64) {cap};
			\draw (0.078,0.628) -- (0.125,0.592);
			\node at (0.274,0.620) {end zone};
			\draw (0.235,0.588) -- (0.187,0.542);
			\node at (0.191,0.378) {throat};
			\draw (0.196,0.404) -- (0.201,0.478);
		\end{scope}
	\end{tikzpicture}
	\caption{The configuration $T_9$, two half-staggered rows of radius-$4$ circles closed by two radius-$1$ caps, shown with the medial axis of its free region and with the cap, end zone, and throat of its left end labeled. The five green vertices are the integer points $(8i+4,3)$ and $(8i+8,5)$, whose maximal disks have radius exactly $1$. All seven internal vertices have degree $3$ and touch a consecutive pair, and the subdivision at any vertex leaves---besides triangles and quadrilaterals---only two-row configurations again (Proposition~\ref{prop:tworow}).}
	\label{fig:tight}
\end{figure}

\begin{proposition}\label{prop:tworow}
	In a two-row configuration, every admissible circle touches exactly three circles, of which two are consecutive; and every reduced polygon of the resulting subdivision with at least five sides is again a two-row configuration.
\end{proposition}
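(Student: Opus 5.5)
The plan is to combine Lemma~\ref{lem:tree} with a description of the free region $\FR$ of a two-row configuration by regions. First I would note that an admissible circle is the same thing as a maximal disk of $\FR$ with at least three contact points. It touches at least three circles and avoids all of them, so it cannot be enlarged. Hence the admissible circles are exactly the circles $\partial\mathcal{D}_p$ for non-leaf vertices $p$ of $\MA(\FR)$. I would then cut $\FR$ along the two throats into the two end zones and a \emph{middle strip}, and classify an admissible disk $\mathcal{D}$ by where it lies.

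\emph{Case 1: $\mathcal{D}$ meets a throat.} By the throat lemma (Lemma~\ref{lem:throat}) its radius is at most $\tfrac12$, so every circle it touches passes within distance $1$ of the throat. After placing the end in coordinates (extremes at $(0,0)$ and $(4,8)$, say), a direct distance estimate should show that only five circles can qualify: the two extremes $E_1,E_2$, their row neighbours $N_1,N_2$, and the cap $K$. The end-zone requirement, together with the cap lying on the far side of the throat, should show that $K$ and the $N_m$ cannot be reached together. Every remaining triple contains a consecutive pair ($E_m N_m$ or $E_m K$), which settles the consecutiveness claim in this case.

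\emph{Case 2: $\mathcal{D}$ lies in an end zone.} By hypothesis it can touch only the cap and the two extremes, and the cap is consecutive to each of them.

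\emph{Case 3: $\mathcal{D}$ lies in the middle strip.} This is the region between the rows, where the geometry is explicit. I would compute the vertices of $\MA$ there directly using the Apollonius linearization of Section~\ref{sec:construction}. They are the points $(8i+4,3)$ and $(8i+8,5)$ with radius $1$, each touching two consecutive circles of one row and the facing circle of the other row. Tangency to a non-consecutive pair in one row is impossible, because the intermediate circle of that row blocks the region between them.

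In every case I would check that no fourth circle is touched. Away from the throats this is by explicit distances: for example, at $(8i+4,3)$ the next-nearest circle is at distance strictly greater than $5$. Near the throats I would compare the Apollonius equations.

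For the second assertion, let the added circle $C$ touch $C_a,C_{a+1},C_m$. The gaps are then $1$, $m-a-1$ and $N-m+a$. The piece with gap $1$ is a triangle, so a reduced polygon with at least five sides is one of the other two pieces. Such a piece consists of a contiguous block of both rows, one original cap, and $C$ in the role of the new cap. The two rows of the piece end at $C_{a+1}$ (or $C_a$) and $C_m$, which are in opposite rows and adjacent in the stagger, so their centers are at distance $4\sqrt5$. What remains to verify is that $C$ qualifies as a cap. It is tangent to both new extremes by construction. Its disk avoids their throat, since $\mathcal{D}_p$ lies on one side of the segment $\overline{v_{a+1}v_m}$ through the two contact points. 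Finally, the new end zone meets no other circles, because it lies in the triangle $p\,v_{a+1}v_m$, which is a subset of the old free region together with two sectors, and that triangle is disjoint from the other disks by Lemma~\ref{lem:split}. I also need to check that the row lengths of the piece still differ by at most one. This should follow from the stagger offset, but I would confirm it in the case where $p$ is near an end.

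The main obstacle I expect is Case~1 together with the exclusion of degree at least $4$ near the ends. There the cap is general, so no closed form is available, and the argument has to rest entirely on the throat lemma and the end-zone hypothesis. The first thing I would try is to bound the distance from the throat to $N_1$, $N_2$ and $K$ in coordinates.
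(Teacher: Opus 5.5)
Your treatment of the first assertion works in outline, but Case~1 is miscalibrated. The row neighbours $N_1,N_2$ cannot be touched at all. Their centers lie at distance at least $16/\sqrt5>7$ from the segment joining the extreme centers. A disk of radius at most $\tfrac12$ meeting the throat has its center within $\tfrac12$ of that segment, so it stays farther than $6>4+\tfrac12$ from them. Only the cap and the two extremes are reachable, so $k=3$ with a consecutive pair is immediate. This is the paper's argument, and the obstacle you anticipate (degree at least $4$ near the ends, settled by comparing Apollonius equations) does not arise.

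Between the rows your explicit route is a valid alternative. The paper instead bounds the radius of a free disk there by $\sqrt{32}-4<4$, which is too small to touch two non-consecutive circles of one row. It excludes two pairs because pair-tangent centers lie on the disjoint lines $x=8i+4$ and $x=8j+8$. Your blocking claim becomes rigorous once quantified: the perpendicular bisector of two row centers $8m\ge16$ apart passes within horizontal distance $4$ of an intermediate center, which is then strictly closer than the touched ones. You must then derive the vertex list from this rather than read it off $T_k$: the center lies on $x=8i+4$, and only the facing circle of the other row yields a disk overlapping nothing. What you gain is the radius $1$, which the second assertion needs.

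The genuine gap is in the second assertion. Your reason why the new cap avoids the throat does not work, because the segment $\overline{v_{a+1}v_m}$ does not pass through the contact points. Likewise, the end-zone check is a distance computation, not a consequence of Lemma~\ref{lem:split}. For a circle added over the rows, the correct reason is that its radius $1$ exceeds $\tfrac12$, so Lemma~\ref{lem:throat} applies.

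For a circle added at an end, touching the cap and both extremes with gaps $(1,1,N-2)$, the claim is false, as your own Case~1 foreshadows. A disk externally tangent to both extremes meets their throat exactly when its radius is at most $\tfrac12$. For instance, take a cap of radius $\tfrac23$ centered on the perpendicular bisector of the extreme centers, $\tfrac43$ beyond their segment; the definition allows it. The circle touching it and both extremes has radius $\tfrac{10}{21}$ and center only $\tfrac4{21}$ from that segment, so its disk contains the midpoint of the throat. Starting from $T_k$, two consecutive wasted moves at one end already produce caps of radii $5-2\sqrt5\approx0.528$ and then about $0.482$. The reduced $N$-gon then violates the cap condition and is not a two-row configuration, so this step cannot be repaired as stated.

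The paper's proof has the same defect. It asserts that the end disk lies in the end zone, although its own first case explicitly allows admissible disks meeting a throat. The second assertion, and with it the induction of Corollary~\ref{cor:tight}, therefore needs an enlarged family in which caps may reach or cross the throat, with both assertions re-established for that family.
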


\begin{proof}
	Consider first a circle that can be added, whose disk meets a closed end zone or a throat. If it meets a throat, then by Lemma~\ref{lem:throat} it has radius at most $\tfrac12$, and its center lies within $1$ of the segment joining the two extreme centers; that segment is at distance greater than $7$ from the center of every circle other than the cap and the two extreme circles, so the disk, at distance greater than $6 > 4 + \tfrac12$ from all such centers, touches none of them. If it avoids the throats, then, being connected and disjoint from the interiors of the extreme disks, it lies inside the end zone, whose closure meets no other circle. In either case the circle can touch only the cap and the two extreme circles of that end. If it is admissible, it therefore touches exactly these three circles, so $k = 3$, and two of the touched circles are consecutive---indeed the cap is consecutive to both extremes.

	Otherwise the disk avoids the closed end zones and the throats, and hence touches only circles of the rows. Place the configuration so that the centers of one row are at points $(8i, 0)$ and those of the other at points $(8j+4, 8)$, for integers $i$ and $j$. We claim that, if the circle is admissible, it touches exactly two consecutive circles of one row and one circle of the other---so $k = 3$ with a consecutive pair, as required. This follows from two features of the rows. First, a free disk between them is too small to reach two non-consecutive circles of a row; second, the half-stagger puts the candidate centers for the two rows on disjoint vertical lines, ruling out two touches per row. Quantitatively, consider a free disk centered at $(x,y)$ with $x$ in the common span of the rows---the interval of abscissas covered by both---so that each row, its centers spaced $8$ apart, has a center within horizontal distance $4$ of $x$. The disk then has radius at most
	\[
		\min\left(\sqrt{16 + y^2},\, \sqrt{16 + (8-y)^2}\right) - 4 \;\le\; \sqrt{32} - 4 \;<\; 4,
	\]
	the two entries of the minimum bounding its distances to the nearest center of the bottom and of the top row, and the minimum being largest at $y = 4$; while a disk touching two non-consecutive circles of the same row, whose centers are at distance at least $16$, has radius at least $4$. Such a circle therefore touches at most two circles per row, and only consecutive ones. Moreover a disk touching two consecutive bottom circles is centered on a line $x = 8i+4$, and one touching two consecutive top circles on a line $x = 8j+8$; these lines are disjoint, so no circle touches two circles of each row. An admissible circle, touching at least three circles in total, therefore touches exactly two consecutive circles of one row and one circle of the other. Concretely, in $T_k$, placed as in Figure~\ref{fig:tight}, the resulting vertices are the integer points $(8i+4, 3)$ and $(8i+8, 5)$, whose maximal disks have radius exactly $1$. Each is at distance $5$ from its three touched circles, and at distance at least $\sqrt{64+25} = \sqrt{89} > 5+1$ from the centers of all other circles of radius $4$.

	It remains to examine the reduced polygons, the added circle being of one of the two kinds just identified. A circle added over the rows splits the configuration, by Lemma~\ref{lem:split}, into a triangle and one or two shorter configurations of the very same two-row form, in which the added radius-$1$ disk becomes a cap. For instance, a disk added at $(4, 3)$, as a cap for the extreme circles at $(8,0)$ and $(4,8)$, is again tangent to both at distance $5$; its center lies at distance $\sqrt5$ from the segment joining the two extreme centers, on the end-zone side, so the disk of radius $1$ avoids their throat---the same integer data as the original caps, reflected. A circle added at an end cuts off two triangles and leaves an $N$-gon with the same rows and extremes, in which the added disk---contained in the same end zone and tangent to the two extreme circles---becomes the new, smaller cap. In both cases each new cap is tangent to its two extreme circles and its disk lies in the corresponding end zone, which meets no other circle, so every reduced polygon with at least five sides satisfies the definition of a two-row configuration.
\end{proof}

\begin{corollary}\label{cor:tight}
	For arbitrarily large $N$ there exist $N$-gons of touching circles that cannot be subdivided into triangles and quadrilaterals by fewer than $N-4$ added circles.
\end{corollary}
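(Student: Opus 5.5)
The plan is to take the family $T_N$ (for every $N \ge 5$) as witnesses, and to prove by strong induction on $N \ge 4$ a statement about the general two-row family. The claim $S(N)$ is: every subdivision of an $N$-gon of touching circles into triangles and quadrilaterals, where the $N$-gon is a two-row configuration if $N \ge 5$, uses at least $\max(0,N-4)$ added circles. Here a subdivision means a sequence of admissible additions, each applied to one of the current polygons, as in Lemma~\ref{lem:split}. The corollary then follows by applying $S(N)$ to $T_N$, which was shown above to be an $N$-gon of touching circles for every $N \ge 5$.

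For the inductive step, take $N \ge 5$ and a two-row configuration. Any complete subdivision must add at least one circle, since the polygon itself has $N \ge 5$ sides. Consider the first added circle. By Proposition~\ref{prop:tworow} it touches exactly three circles, two of them consecutive, so its gaps are $(1, a, b)$ with $a + b = N - 1$ and $a, b \ge 1$.

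The reduced polygons therefore have $3$, $a+2$ and $b+2$ sides. Again by Proposition~\ref{prop:tworow}, each reduced polygon with at least five sides is itself a two-row configuration, so the induction hypothesis applies to it. Moreover, the remaining additions act independently inside each piece, because each later added circle lies in one current polygon. Hence the total number of added circles is at least
\[
1 + \max(0, a-2) + \max(0, b-2) \;\ge\; 1 + (a - 2) + (b - 2) \;=\; N - 4.
\]

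The one step I expect to need care is justifying that later additions decompose over the pieces. An admissible circle for the whole remaining configuration is always added inside a single current polygon, by the remark after Definition~\ref{def:add} that an added circle lies in its polygon. So the counts for the pieces simply add.

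The base cases are immediate, since the bound $\max(0,N-4)$ is vacuous for $N \le 4$. There is no real obstacle beyond this bookkeeping; all the geometric content is already in Proposition~\ref{prop:tworow}.
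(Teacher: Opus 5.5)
Your proof has a genuine gap: strong induction on $N$ is not well-founded here. Suppose the first added circle sits at an end of the configuration. Then it touches the cap and the two extreme circles, so its gaps are $(1,1,N-2)$, that is, $a=1$ and $b=N-2$. The reduced polygon with $b+2=N$ sides is again a two-row configuration with $N$ sides: the same rows, with the added circle as a new, smaller cap. Strong induction on $N$ only supplies $S(M)$ for $M<N$, so you cannot invoke the hypothesis for this piece. Your bound $1+\max(0,a-2)+\max(0,b-2)$ therefore silently uses $S(N)$ in the proof of $S(N)$. This case cannot be waved away either. Such ``wasted'' end moves can be repeated any finite number of times, each one reproducing an $N$-gon of the same family. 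So the claim that there is ``no real obstacle beyond this bookkeeping'' misses exactly the case that needs care.

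The fix is to induct on the number of added circles rather than on $N$, which is what the paper does. State the claim as: every complete subdivision of a two-row configuration with $M\ge 5$ sides that uses $n$ added circles has $n\ge M-4$, and induct on $n$. Since $n = 1+\sum_j n_j$, each piece is subdivided by $n_j\le n-1$ circles. The hypothesis then applies to every piece with at least five sides, including the $N$-sided piece left by an end move. Your arithmetic goes through unchanged: the total is at least $N-3$ when some gap equals $1$, and at least $N-4$ otherwise. The rest of your argument matches the paper, namely the use of Proposition~\ref{prop:tworow} for the first circle and for the pieces, and the additivity of the counts over the pieces.
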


\begin{proof}
	Since $T_N$ is a two-row configuration with $N$ sides for every $N \ge 5$, it suffices to show that subdividing a two-row configuration with $N$ sides into triangles and quadrilaterals requires at least $N-4$ added circles. We argue by induction on the total number of circles added. Since $N \ge 5$, at least one circle is added. By Proposition~\ref{prop:tworow}, the first one touches exactly three circles with a consecutive pair, so its gaps are $1, g_2, g_3$ with $g_2 + g_3 = N-1$, and it subdivides the polygon into a triangle together with reduced polygons of $g_2 + 2$ and $g_3 + 2$ sides. Every later circle is added inside one of these reduced polygons, and a reduced polygon with $g_j + 2 \ge 5$ sides is again a two-row configuration (Proposition~\ref{prop:tworow}), whose complete subdivision uses at least $g_j - 2$ added circles by the induction hypothesis. The total number of added circles is therefore at least $1 + \max(0, g_2 - 2) + \max(0, g_3 - 2)$. One case to watch for is $g_j = 1$, arising when the first circle is added at an end. The large reduced polygon then has $N$ sides again---the same rows with a smaller cap---so the addition was a wasted move, and this is why the induction runs on the number of added circles rather than on $N$. The arithmetic confirms the picture. When $g_2, g_3 \ge 2$ the total is at least $1 + (g_2 - 2) + (g_3 - 2) = N - 4$; and when some $g_j = 1$, the other gap is $N - 2$, and the total is at least $1 + (N - 2) - 2 = N-3$. In either case at least $N-4$ circles are added.
\end{proof}


\section{Polygons with rivers}
\label{sec:rivers}

In the circle/river method, a \emph{river} is a curved strip of constant width representing a segment shared by several flaps; within an axial polygon, a river enters through one gap between consecutive circles and leaves through another, separating the circles of the polygon into two groups. We now show that Theorem~\ref{thm:main} persists in the presence of a river.

\begin{definition}\label{def:riverring}
	An \emph{$N$-gon of touching circles with a river of width $w>0$} consists of a simple polygon $P$ with vertices $v_1,\dots,v_N$, circles $C_i$ centered at $v_i$ with radii $r_i > 0$, and two distinguished non-adjacent edge indices $a \ne b$, partitioning the circles into the two \emph{banks} $A = \{C_{a+1},\dots,C_b\}$ and $B = \{C_{b+1},\dots,C_a\}$, such that
	\begin{enumerate}
		\item $\dist{v_iv_{i+1}} = r_i + r_{i+1}$ for $i \ne a,b$, while $\dist{v_iv_{i+1}} = r_i + r_{i+1} + w$ for $i \in \{a,b\}$; and
		\item $\dist{v_iv_j} > r_i + r_j$ for non-consecutive $C_i, C_j$ on the same bank, and $\dist{v_iv_j} > r_i + r_j + w$ for non-consecutive $C_i, C_j$ on opposite banks.
	\end{enumerate}
	In the underlying design, the river enters $P$ through the gap on the edge $\overline{v_av_{a+1}}$, separates the flaps of $A$ from those of $B$, and leaves through the gap on $\overline{v_bv_{b+1}}$.
\end{definition}

Figure~\ref{fig:river}(a) shows an octagon of touching circles with a river. We assume throughout this section that the circle is to be added on the bank $A$; this loses no generality, as exchanging the indices $a$ and $b$ exchanges the two banks. A circle $C$ with center $p \in \operatorname{int} P$ and radius $\rho$ added on the bank $A$ must not overlap the circles of $A$, and must in addition leave room for the river to pass between itself and every circle of $B$; that is,
\begin{equation}\label{eq:riveradd}
	d_i(p) \ge \rho \quad \text{for } C_i \in A,
	\qquad
	d_i(p) \ge \rho + w \quad \text{for } C_i \in B.
\end{equation}
When equality holds we say that $C$ \emph{touches} $C_i$ (for $C_i \in A$), or touches it \emph{across the river} (for $C_i \in B$); in the latter case the segment $\overline{pv_i}$ has length $\rho + w + r_i$, matching the length of an axial-polygon edge crossed by the river. Subdividing along the segments from $p$ to the touched centers then decomposes $P$ exactly as in Lemma~\ref{lem:split}, with the river now threading through those sub-polygons whose boundary separates the two banks; the count of \eqref{eq:gap} is unchanged.

\begin{figure}[tbp]
	\centering
	\subfloat[]{\includegraphics[width=0.5\textwidth]{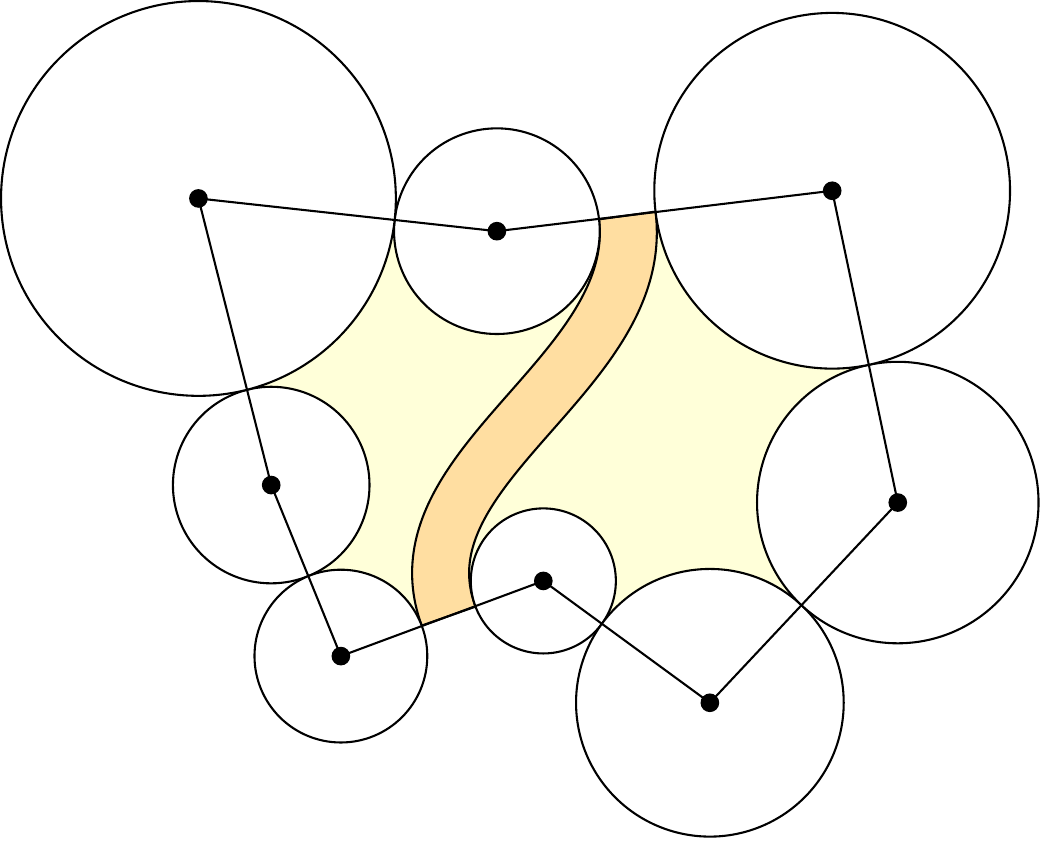}}\\
	\subfloat[]{\includegraphics[width=0.46\textwidth]{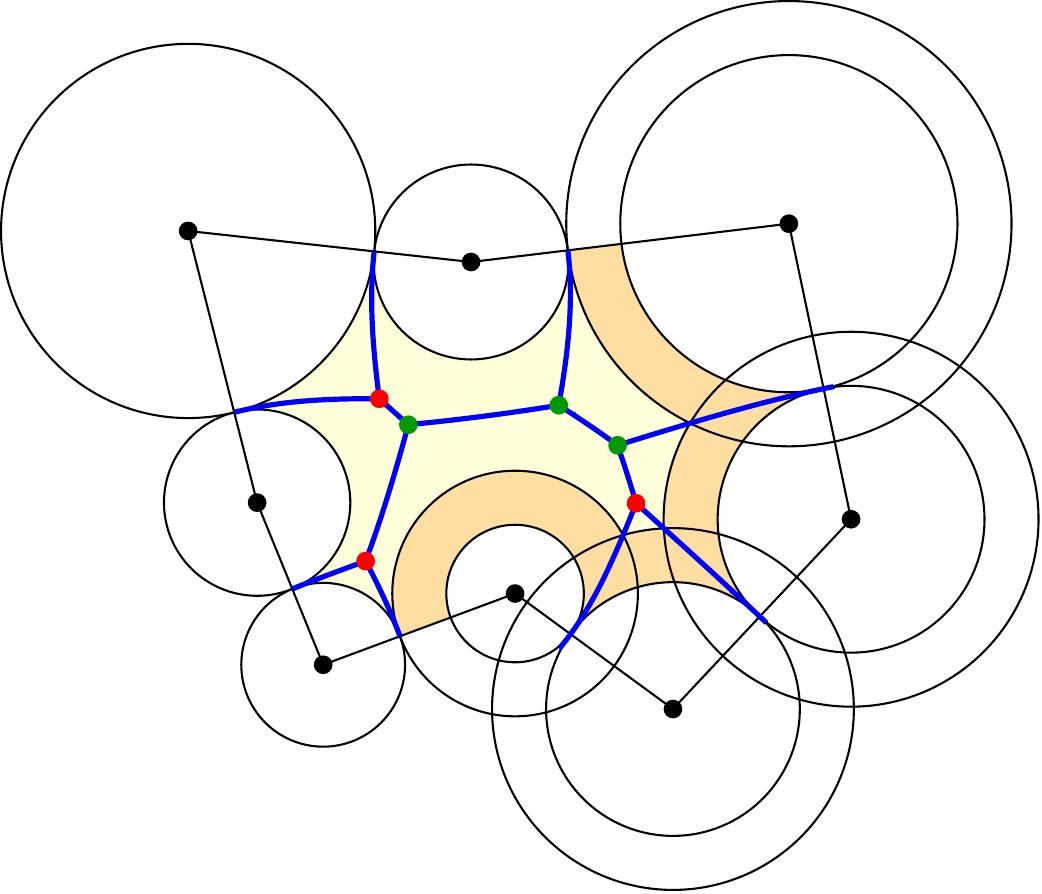}}\hspace{0.04\textwidth}
	\subfloat[]{\includegraphics[width=0.46\textwidth]{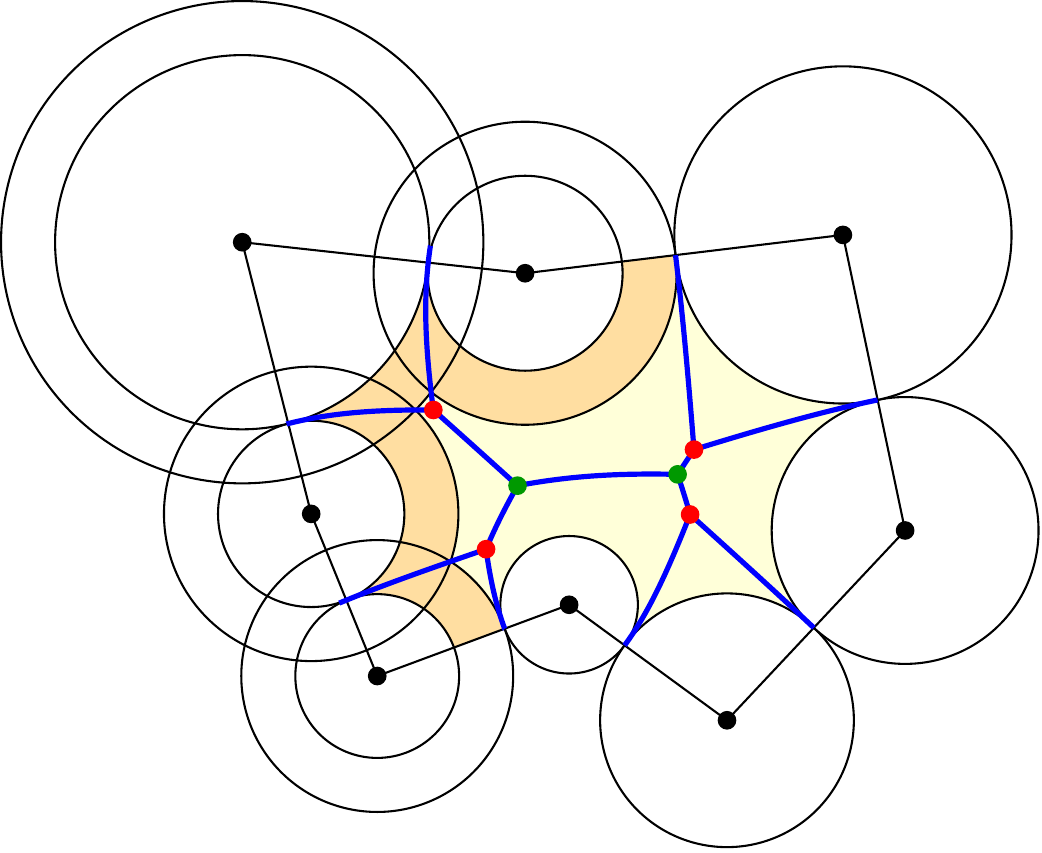}}
	\caption{(a) An octagon of touching circles with a river of width $w$ (shaded orange), drawn as the designer conceives it---a strip of width $w$ entering and leaving the polygon through the two gaps. (b), (c) The medial axes (blue) of the free regions of the two expanded configurations of the same example. In (b) the circles of one bank, and in (c) those of the other, are enlarged by $w$ (enlarged circles are drawn with double outlines), and the collar between the original and the enlarged circles---the region along which the river is rerouted in the proof of Theorem~\ref{thm:river}---is shaded in the same orange. Consecutive enlarged circles overlap, and their boundary arcs meet transversally instead of at cusps; the medial axis is nevertheless a tree with $N = 8$ leaves in both cases. Internal vertices are colored green or red as in Figure~\ref{fig:tree}. The two choices of bank yield two different reductions of the same configuration.}
	\label{fig:river}
\end{figure}

The device of the proof is to absorb the river into the circles of the bank $B$, and to perform the absorption \emph{continuously}. For $t \in [0,w]$ define
\begin{equation}
	\tilde r_i(t) = r_i \quad \text{for } C_i \in A,
	\qquad
	\tilde r_i(t) = r_i + t \quad \text{for } C_i \in B,
\end{equation}
write $\tilde C_i(t)$, $\tilde D_i(t)$ for the corresponding circles and disks, and let
\[
	\FR(t) \;=\; \overline{P \setminus (\tilde D_1(t) \cup \cdots \cup \tilde D_N(t))}
\]
be the free region at time $t$, which shrinks as $t$ grows. At $t = w$ we obtain the \emph{expanded configuration}. By \eqref{eq:riveradd}, a circle can be added on the bank $A$ exactly when it can be added to the expanded configuration in the sense of Definition~\ref{def:add}. The expanded configuration is no longer a polygon of \emph{touching} circles in the strict sense. Consecutive circles within the bank $B$ overlap, their boundaries crossing transversally (Figure~\ref{fig:river}), the two gap pairs become tangent, and in degenerate configurations an expanded disk may even swallow the free arcs of other circles of $B$ entirely (Figure~\ref{fig:swallow}). Rather than excluding such degenerations, we track them along the shrinking process. For $t < w$, the boundary of $\FR(t)$ contains two segments of length $w - t$ on the gap edges, which we call the \emph{mouths}. They are the openings through which the river enters and leaves the polygon, and at $t = w$ they close into two tangency cusps.

\begin{figure}[tb]
	\centering
	\subfloat[]{\includegraphics[width=0.44\textwidth]{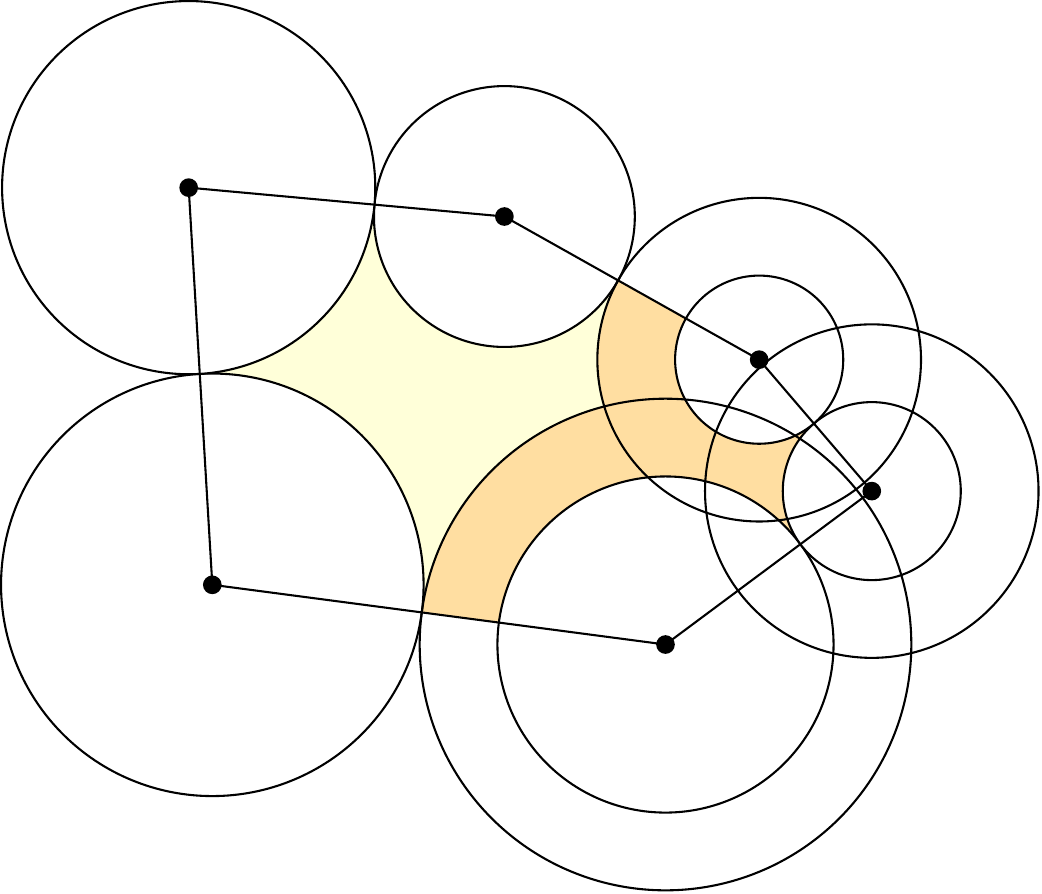}}\hspace{0.04\textwidth}
	\subfloat[]{\includegraphics[width=0.44\textwidth]{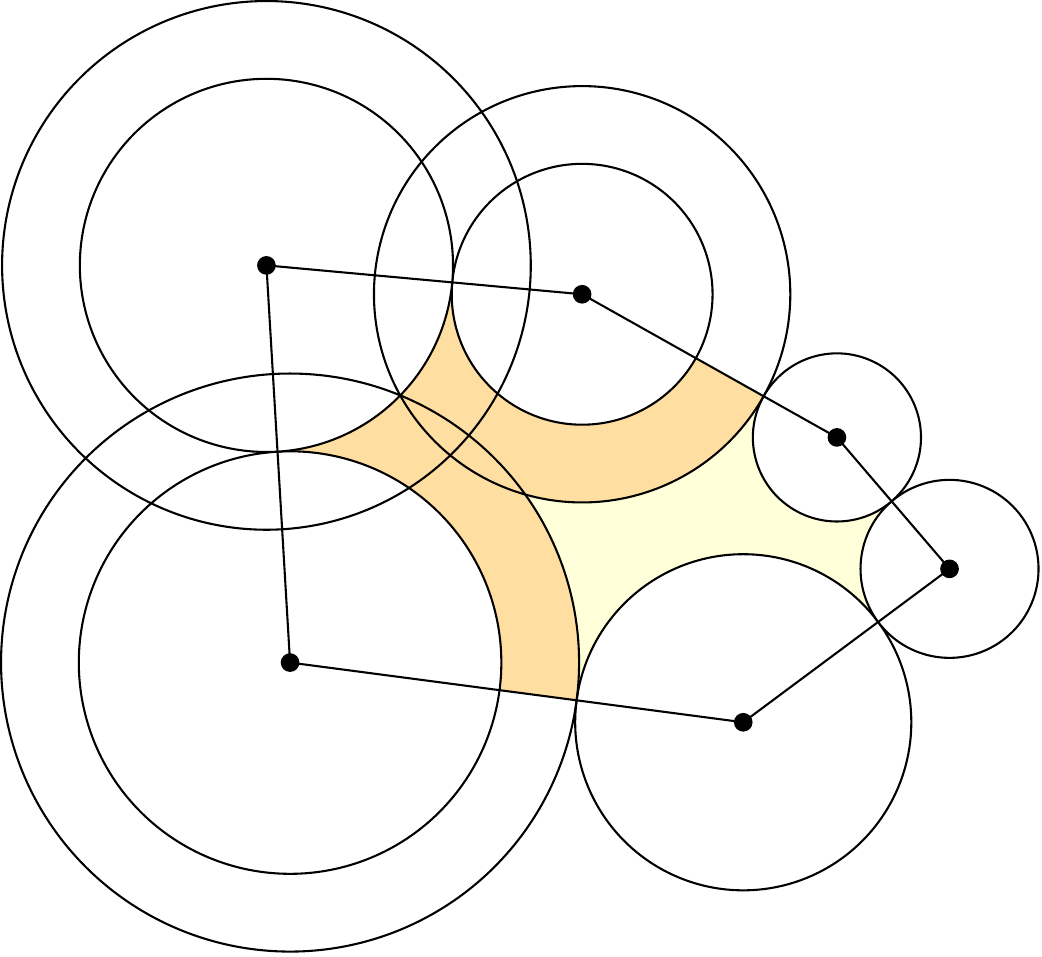}}
	\caption{A hexagon of touching circles with a river, expanded on either bank (enlarged circles drawn with double outlines, the collar shaded orange as in Figure~\ref{fig:river}). In both (a) and (b) the expansion is degenerate: the middle circle of the expanded bank is swallowed by its two enlarged neighbors, its free arc disappearing entirely, so that the boundary of the free region runs from one neighbor directly to the other.}
	\label{fig:swallow}
\end{figure}

Two consequences of Definition~\ref{def:riverring} confine all degeneration strictly to the interior of the expanded bank.

\begin{lemma}[bank integrity]\label{lem:integrity}
	For all $t \in [0,w]$ and every $C_j \in B$:
	\begin{enumerate}
		\item\label{it:crossbank} $\tilde D_j(t)$ meets no disk of the bank $A$, except that the two gap pairs become tangent at $t = w$; in particular it contains no center of a circle of $A$ and meets no edge of $P$ joining two circles of $A$;
		\item\label{it:mouth} the interior of $\tilde D_j(t)$ contains no point of either mouth.
	\end{enumerate}
\end{lemma}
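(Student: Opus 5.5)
Both parts reduce to the distance inequalities of Definition~\ref{def:riverring}, combined with the fact that $\tilde r_j(t) = r_j + t \le r_j + w$ for $C_j \in B$ while $\tilde r_i(t) = r_i$ for $C_i \in A$. No topology of $\FR(t)$ is needed. Throughout, fix $t \in [0,w]$, $C_j \in B$ and $C_i \in A$.

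For \ref{it:crossbank}, the only consecutive pairs with one circle in each bank are the gap pairs $(C_a, C_{a+1})$ and $(C_b, C_{b+1})$. For these, $\dist{v_iv_j} = r_i + r_j + w$. For every other cross-bank pair, $\dist{v_iv_j} > r_i + r_j + w$. In both cases $\dist{v_iv_j} \ge r_i + r_j + w \ge r_i + (r_j + t)$, with equality only for a gap pair at $t = w$. Hence $\tilde D_j(t) \cap D_i = \emptyset$ except for that tangency. Since $r_i > 0$, this also gives $\dist{v_iv_j} > r_j + t$, so $v_i \notin \tilde D_j(t)$.

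Next consider an edge $\overline{v_iv_{i+1}}$ with both endpoints in $A$. It is not a gap edge, so its length is $r_i + r_{i+1}$, and the first step of the proof of Lemma~\ref{lem:cover} shows it is covered by $D_i \cup D_{i+1}$. A point of $\tilde D_j(t)$ on this edge would lie in $D_i$ or $D_{i+1}$, which $\tilde D_j(t)$ does not meet. The one exception is the tangency point at $t = w$. That point lies on a gap edge, at distance $r_{a+1}$ (or $r_{b+1}$) from the $A$-endpoint, hence at an interior point of the gap edge. By simplicity of $P$, an interior point of the gap edge cannot lie on a different edge $\overline{v_iv_{i+1}}$, which settles the exception.

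For \ref{it:mouth}, take the mouth on $\overline{v_av_{a+1}}$, with $C_a \in B$ and $C_{a+1} \in A$; the other mouth is symmetric. A point $x$ of the mouth lies at distance $s \in [r_a + t,\, r_a + w]$ from $v_a$, and therefore at distance $(r_a + r_{a+1} + w) - s$ from $v_{a+1}$. By the triangle inequality and the cross-bank bound from \ref{it:crossbank},
\[
	\dist{xv_j} \;\ge\; \dist{v_jv_{a+1}} - \dist{xv_{a+1}} \;\ge\; (r_j + r_{a+1} + w) - (r_a + r_{a+1} + w - s) \;=\; r_j + (s - r_a) \;\ge\; r_j + t .
\]
So $x$ is not in the interior of $\tilde D_j(t)$. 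This covers every $C_j \in B$, including $j = a$ itself, where the bound used is the exact edge length.

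I do not expect a real obstacle. The only delicate point is the tangency exception in \ref{it:crossbank} at $t = w$, where one must check that the tangency point cannot lie on an $A$--$A$ edge; simplicity of $P$ handles it. The key trick for \ref{it:mouth} is to measure from the far endpoint $v_{a+1}$ rather than from $v_a$, which turns the cross-bank inequality directly into the required bound.
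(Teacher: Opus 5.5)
Your proof is correct and follows essentially the same route as the paper. Part (1) uses the cross-bank distance inequalities together with the covering of $A$--$A$ edges by their endpoint disks. Part (2) measures a mouth point from the bank-$A$ endpoint of its gap edge; you merge this into a single triangle-inequality chain, where the paper splits off the mouth's own bank-$B$ circle and argues by contradiction with the strict non-consecutive bound. Your explicit treatment of the $t = w$ tangency point, an interior point of the gap edge and hence on no $A$--$A$ edge by simplicity of $P$, fills a small step that the paper's one-line covering argument leaves implicit.
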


\begin{proof}
	\ref{it:crossbank} For $C_m \in A$ non-consecutive to $C_j$, Definition~\ref{def:riverring} gives $\dist{v_jv_m} > r_j + r_m + w \ge \tilde r_j(t) + r_m$; for the gap pairs, $\dist{v_jv_m} = r_j + r_m + w \ge \tilde r_j(t) + r_m$, with equality only at $t = w$. An edge joining two circles of $A$ is covered by their disks, so meeting it would mean meeting a disk of $A$.

	\ref{it:mouth} If $C_j$ is the bank-$B$ circle of the mouth's own gap edge, the mouth consists by definition of points at distance at least $\tilde r_j(t)$ from $v_j$. Otherwise, let $C_m \in A$ be the bank-$A$ circle of that gap edge; every point $q$ on the mouth satisfies $\dist{qv_m} \le r_m + (w - t)$, so $q \in \tilde D_j(t)$ would give
	\[
		\dist{v_jv_m} \;\le\; (r_j + t) + (r_m + (w-t)) \;=\; r_j + r_m + w,
	\]
	contradicting Definition~\ref{def:riverring} for the non-consecutive cross-bank pair $C_j$, $C_m$.
\end{proof}

The next lemma is the structural heart of this section---no matter how badly the expanded bank degenerates, the boundary of the free region can never scramble the circles.

\begin{lemma}[shrinking lemma]\label{lem:shrink}
	For every $t \in [0,w]$, each connected component of $\FR(t)$ is bounded by circular arcs of distinct circles---at most one arc per circle---occurring along the boundary in their cyclic index order, interleaved in the correct position with at most two mouth segments.
\end{lemma}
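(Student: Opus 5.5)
I would prove the shrinking lemma by continuity in $t$, starting from $t = 0$ and tracking how the boundary of $\FR(t)$ changes as the bank-$B$ disks grow.

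At $t = 0$ the configuration is the original one with the two gap edges lengthened by $w$. The argument of Lemma~\ref{lem:cover} and Lemma~\ref{lem:domain} carries over with only notational changes. Each $C_i \cap P$ is a single arc, and consecutive arcs meet at a tangency point except across the two gap edges. Across a gap edge, the arcs are joined by a mouth segment of length $w$. So $\FR(0)$ is a single Jordan domain whose boundary reads $\mathcal{A}_1,\dots,\mathcal{A}_N$ in cyclic order, with the two mouths inserted at positions $a$ and $b$. This is the statement at $t=0$.

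For $t > 0$, the arcs of circles in $A$ never change. By Lemma~\ref{lem:integrity}\ref{it:crossbank}, no disk of $B$ reaches them, except at the two gap tangencies at $t = w$. The mouths only shorten, and by Lemma~\ref{lem:integrity}\ref{it:mouth} no enlarged disk eats into them. Hence the bank-$A$ stretch of the boundary, running from one mouth through $\mathcal{A}_{a+1},\dots,\mathcal{A}_b$ to the other mouth, is rigid. All change happens along the bank-$B$ stretch, where only the disks $\tilde D_{b+1}(t),\dots,\tilde D_a(t)$ grow.

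The key step is to describe the free part of $\partial\FR(t)$ near bank $B$ as the lower envelope of the distance functions. I would parametrize each point $q$ of the free boundary on bank $B$ by the set of indices $j \in B$ with $d_j(q) = t$, that is, the enlarged circles passing through $q$. Along the free boundary, these indices should vary monotonically in the cyclic order of $B$. The argument I have in mind is the following. Suppose the boundary visited $\tilde C_j$, then $\tilde C_m$, then $\tilde C_j$ again. Then the two visits to $\tilde C_j$, together with the centre $v_j$, cut off a region containing the free arc of $\tilde C_m$. That region would separate $\tilde D_m(t)$ from the rest of the free region and from $\partial P$. But every disk of $B$ contains a nonempty sector reaching the boundary of $P$ near its own vertex $v_m$, as in Lemma~\ref{lem:domain}\ref{it:sector}. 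I would obtain the contradiction by a Jordan curve argument: the radial segments $\overline{v_jq}$ to the two visiting points, together with the portion of $\partial P$ between them, enclose $v_m$ in a way incompatible with the simple polygon $P$.

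The same reasoning also handles a disk being swallowed: an arc simply disappears, and no reordering occurs. I expect this step to be the main obstacle. The difficulty is to make the nesting argument rigorous when the free region splits into several components, which happens when two non-consecutive enlarged disks of $B$ touch.

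For splitting, I would argue as follows. A component forms when enlarged disks meet in the interior. Each new component is bounded by a contiguous subsequence of the previous cyclic sequence of arcs, together with at most the mouths already present. Hence ordering, distinctness of circles, and correct interleaving of the mouths are inherited.

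Combining this inheritance with continuity of the envelope in $t$, the only events are finitely many: arcs vanishing, and components splitting or vanishing. Each event preserves the property, so the statement holds for all $t \in [0,w]$.
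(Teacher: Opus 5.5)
There is a genuine gap at the core of your proposal: nothing in it shows that boundary material can only disappear. The paper's proof rests on a permanence argument. Follow the point $v_j+\tilde r_j(t)\,u$ of an expanding circle $\tilde C_j$. Once it lies in another expanded disk $\tilde D_m$, it stays there, because the point moves at unit speed while $\tilde D_m$ grows at unit speed. Once it leaves $P$, it can return to $\partial P$ only at an edge point covered by a disk of $B$ (Lemma~\ref{lem:integrity}), and that coverage is again permanent. Only this guarantees that arcs merely shrink, split, or vanish. Your list of events (``arcs vanishing, components splitting or vanishing'') silently excludes a covered piece of some $\tilde C_m$ re-emerging as free boundary, possibly in the wrong place. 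Your argument also never uses that all disks of $B$ grow at the same rate, which is exactly what breaks down in the unstaged setting of Section~\ref{sec:multirivers}. The cyclic order depends on this too. Ruling out a pattern $j,m,j$ gives at most distinctness of the circles on a component, not their order: a sequence such as $1,3,2,4$ has no repetition. So in your scheme the order must be inherited from $t=0$, and that inheritance holds only if permanence does.

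The Jordan-curve step is also not sound as written. Your points $q_1,q_2$ are interior to $P$, so there is no ``portion of $\partial P$ between them''. Suppose you mean the boundary stretch of the component through the arc of $\tilde C_m$. You still need $\overline{v_jq_i}\subset P$, which is not automatic for an enlarged disk in a non-convex polygon. You also cannot conclude that the curve encloses $\tilde D_m$ and hence $v_m$: the disks of $B$ overlap, so $\overline{v_jq_i}$ may pass through the interior of $\tilde D_m$. The natural repair uses that along a ray from $v_j$, $d_j$ grows at rate one and $d_m$ at rate at most one, so $d_j\le d_m$ on $\overline{v_jq_i}$; this is the same permanence fact in static form. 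Your splitting paragraph likewise just asserts that each new component inherits a contiguous run. What needs proving is that the two sides of a pinch cannot stay connected. The paper shows this by closing a free path through the contact point into a Jordan curve $J\subset\operatorname{int}P$. Such a curve must enclose one of the two tangent disks together with its center, a vertex of $P$, yet all of $\partial P$ lies outside $J$.
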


\begin{proof}
	At $t = 0$ the circles are pairwise tangent or disjoint with the strict clearances of Definition~\ref{def:riverring}, and the proofs of Lemmas~\ref{lem:cover} and~\ref{lem:domain} apply verbatim except on the two gap edges, where the uncovered mouths appear. Thus $\FR(0)$ is a single component bounded by the $N$ arcs in cyclic order and the two mouths. It remains to show that this structure can only degenerate, never reorganize, as $t$ grows.

	First, boundary material only disappears, never appears. Fix $C_j \in B$ and a unit vector $u$, and follow the point $p(t)$ with $\overrightharpoon{p}(t) = \overrightharpoon{v_j} + \tilde r_j(t)\,u$ of $\tilde C_j(t)$. Call the direction $u$ \emph{dead} at time $t$ if $p(t)$ lies in the interior of another expanded disk or outside $P$. Death is permanent. If $p(t_0) \in \tilde D_m(t_0)$ for some $C_m \in B$, then for $t \ge t_0$,
	\[
		\dist{p(t)\,v_m} \;\le\; \dist{p(t_0)\,v_m} + (t - t_0) \;\le\; \tilde r_m(t_0) + (t - t_0) \;=\; \tilde r_m(t),
	\]
	since $p$ moves with unit speed while $\tilde D_m$ grows with unit speed; disks of $A$ never meet $\tilde C_j(t)$ at all, by Lemma~\ref{lem:integrity}\ref{it:crossbank}; and if $p(t_0)$ exits $P$, then at the first later time at which $p$ returns to $\partial P$, the return point is neither on a vertex or edge of the bank $A$ (Lemma~\ref{lem:integrity}\ref{it:crossbank}) nor on a mouth (Lemma~\ref{lem:integrity}\ref{it:mouth}), so it lies on a part of an edge covered by the expanded disk of a circle of $B$---and such coverage is permanent by the first computation. The points of the circles of $A$, finally, are never covered by anything. Hence every boundary arc---viewed as the set of directions $u$ it spans, its radius never decreasing---can only shrink, split, or vanish, and no circle ever contributes new boundary material.

	Second, the combinatorial structure changes only at finitely many critical times, namely when an arc or mouth degenerates to a point, or when two expanded circles become tangent at a point of the closure of the free region (tangencies occurring inside covered territory or outside $P$ are invisible); by Lemma~\ref{lem:integrity}, such a tangency can only involve two circles of $B$, or a gap pair at the final time $t = w$, where a mouth closes into a cusp. When an arc or mouth degenerates, one entry is deleted from the boundary sequence of its component, which preserves the asserted structure. When two circles become tangent at a free point, the free region is pinched there, and the two local sides of the pinch end up in different components. Suppose not. Then some path inside the free region joins the two sides, and, closed up through the contact point, it yields a Jordan curve $J$ contained in the interior of $P$. Near the contact point, the two tangent disks lie on opposite local sides of $J$, so $J$ encloses one of the two disks---and with it the disk's center, which is a vertex of $P$. On the other hand, $\partial P$ is connected, avoids $J$, and reaches points arbitrarily far away, so all of $\partial P$ lies outside $J$; in particular every vertex of $P$ does, a contradiction. So the pinch splits the component in two, and each new component inherits a contiguous run of the old boundary sequence, together with the new corner. In all cases the invariant survives, and the lemma follows by induction over the critical times.
\end{proof}

\begin{lemma}\label{lem:main}
	At $t = w$, some component $\FR_\ast$ of $\FR(w)$ carries on its boundary the arcs of all circles of $A$, the two mouth cusps, and arcs of the two circles of $B$ adjacent to the gap edges.
\end{lemma}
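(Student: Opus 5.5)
We will show that all the listed boundary pieces lie on one component by exhibiting, for each pair of consecutive pieces, a path in $\FR(w)$ joining them. Bank $A$ is never altered, so we first check that its arcs survive to $t=w$ with their tangency cusps intact. By Lemma~\ref{lem:integrity}\ref{it:crossbank}, no expanded disk of $B$ meets a disk of $A$ before time $w$, and at time $w$ it meets one only at the tangency point of a gap pair. Hence each arc $\mathcal{A}_m$ with $C_m\in A$ is never covered, except possibly at the gap tangency points. The argument of Lemma~\ref{lem:domain}\ref{it:arc} still places each $\mathcal{A}_m$ inside $P$. So each $\mathcal{A}_m$ is a nondegenerate boundary arc of $\FR(w)$. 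The tangency cusps $t_m$ between consecutive circles of $A$ persist as well. Near such a cusp, the free region is a thin wedge that no disk of $B$ reaches, again by Lemma~\ref{lem:integrity}\ref{it:crossbank}. Consequently the consecutive arcs $\mathcal{A}_{a+1},\dots,\mathcal{A}_b$ are chained through shared cusps, and all of them lie in one component $\FR_\ast$.

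Next we attach the two ends of this chain. Consider the end at the gap edge $\overline{v_av_{a+1}}$. For $t<w$, the mouth is the segment of that edge between $\tilde D_a(t)$ and $D_{a+1}$. By Lemma~\ref{lem:integrity}\ref{it:mouth}, no expanded disk covers it. At $t=w$ it closes into the tangency point $s$ of $\tilde C_a(w)$ and $C_{a+1}$. This point is the mouth cusp, and it is an endpoint of $\mathcal{A}_{a+1}$. To show that an arc of $\tilde C_a(w)$ really lies on $\partial\FR_\ast$ near $s$, we argue locally. Near $s$, the only disks present are $\tilde D_a(w)$ and $D_{a+1}$. Every other disk of $B$ stays away: by the strict cross-bank inequality it is disjoint from a neighborhood of $v_{a+1}$'s circle near $s$. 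More precisely, a disk of $B$ containing $s$ would contain a point at distance $r_{a+1}$ from $v_{a+1}$, which contradicts $\dist{v_jv_{a+1}}>r_j+r_{a+1}+w$. The situation near $s$ is therefore the same as at an ordinary cusp. The cusp region between the two tangent circles is nonempty, and its boundary on the $B$ side is a short arc of $\tilde C_a(w)$. Applying the symmetric argument at $\overline{v_bv_{b+1}}$ gives the mouth cusp there and an arc of $\tilde C_{b+1}(w)$ on $\partial\FR_\ast$.

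The main obstacle is to make this local cusp picture rigorous, and to be sure the two $B$-arcs belong to the same component $\FR_\ast$ as the $A$-chain rather than being pinched off. The second point follows from the local picture: the cusp region at $s$ is a connected wedge adjacent both to $\mathcal{A}_{a+1}$ and to $\tilde C_a(w)$, so it cannot be split. The first point, that an expanded circle of $B$ other than $\tilde C_a$ does not reach $s$, should follow from the distance estimate just given. We use the following step. Suppose $x\in\tilde D_j(w)$ is close to $s$. The point $s$ lies on $C_{a+1}$, so $\dist{v_jv_{a+1}}\le r_j+w+r_{a+1}+\varepsilon$, and for small $\varepsilon$ this contradicts strictness. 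Lemma~\ref{lem:shrink} can be used to confirm that no reorganization occurs: the components of $\FR(w)$ carry contiguous runs of the cyclic sequence. The run containing the $A$-chain, extended through both cusps, then gives exactly the asserted component.
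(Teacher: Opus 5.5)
Your proposal is correct and takes essentially the same route as the paper. The $A$-chain survives untouched by Lemma~\ref{lem:integrity}\ref{it:crossbank}, its collar of free points is connected through the cusp wedges, and at each mouth cusp the same wedge is bounded by an initial piece of the adjacent expanded $B$-circle; your explicit check via the strict cross-bank inequality, that no other disk of $B$ comes near the mouth cusp, only spells out what the paper leaves implicit, and the closing appeal to Lemma~\ref{lem:shrink} is not needed.
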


\begin{proof}
	By Lemma~\ref{lem:integrity}, no expanded disk meets any point of an arc $\mathcal{A}_m$ of a circle $C_m \in A$, so the free side of the whole $A$-chain survives at $t = w$. The free points at sufficiently small positive distance from the arcs of the chain form a connected set---it passes through the interior wedges at the tangency cusps of the chain---and hence lie in a single component $\FR_\ast$. The boundary of $\FR_\ast$ then contains every $\mathcal{A}_m$, the two mouth cusps at the two ends of the chain, and, beyond each mouth cusp, an initial piece of the arc of the adjacent expanded circle bounding the same wedge.
\end{proof}

\begin{theorem}\label{thm:river}
	For any $N$-gon of touching circles with a river, with $N > 4$, it is possible to add a circle on either bank, touching at least three of the circles---directly or across the river---so that the largest reduced polygon has at most $N-1$ sides.
\end{theorem}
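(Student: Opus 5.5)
We fix the bank $A$ (the other case is symmetric, by exchanging $a$ and $b$). The plan is to run the argument of Theorem~\ref{thm:main} on the component $\FR_\ast$ of the expanded free region $\FR(w)$ supplied by Lemma~\ref{lem:main}.

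The first step identifies the structure of $\FR_\ast$. By Lemma~\ref{lem:shrink}, $\partial\FR_\ast$ is a simple closed curve made of arcs of distinct circles, at most one per circle, in cyclic index order. At $t=w$ the two mouths have closed into tangency cusps. By Lemma~\ref{lem:main}, the arcs include those of all $|A|$ circles of $A$, together with the two $B$-circles adjacent to the gap edges. Let $M$ be the number of circles contributing an arc; then $|A|+2\le M\le N$. The corners of $\partial\FR_\ast$ are of two kinds:
\begin{itemize}
\item cusps, at the tangencies within $A$ and at the two closed mouths, all of interior angle zero;
\item transversal crossings of consecutive surviving $B$-arcs, including crossings created by pinches. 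Their interior angle is less than $\pi$ because both disks lie outside $\FR_\ast$, so these corners are sharp as well.
\end{itemize}
Since $\FR_\ast$ is simply connected (as in Lemma~\ref{lem:domain}, its complement is connected), Theorem~\ref{thm:ccm} applies. Repeating the proof of Lemma~\ref{lem:tree}, its medial axis is a tree whose leaves are exactly the $M$ corners. The internal vertices are centers of maximal disks touching at least three arcs, each at the nearest point of the corresponding circle. Such a disk lies in $\FR(w)$, so it satisfies \eqref{eq:riveradd}, and it therefore gives a circle added on bank $A$ touching its circles directly or across the river. The branch statement of Lemma~\ref{lem:tree}\ref{it:branches} also transfers: the leaves in each component of $\MA\setminus\{p\}$ are the corners between consecutive touched arcs.

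The second step is the counting, which is where the real work lies. A touched pair $C_{i_j}$, $C_{i_{j+1}}$ has gap $g_j=i_{j+1}-i_j$ measured in the original indices. Because swallowed $B$-circles have been skipped, this exceeds the number of leaves in the branch whenever swallowed circles lie in that stretch. Precisely, if $\ell_j$ denotes the number of leaves in the $j$-th branch, then
\[
g_j=\ell_j+s_j \quad\text{and}\quad \sum_j s_j=N-M,
\]
where $s_j$ counts the swallowed circles between $C_{i_j}$ and $C_{i_{j+1}}$.

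There are two cases.
\begin{itemize}
\item If $M=N$, the tree has $N\ge5$ leaves. The tree lemma (Lemma~\ref{lem:treelemma}) gives a vertex with every branch holding at most $N-3$ leaves, and then Lemma~\ref{lem:split} (in its river form) finishes.
\item If $M<N$, I would instead apply the tree lemma with a weighting. Assign each leaf weight $1$, and each crossing-corner leaf additionally the number of swallowed circles between its two arcs. The aim is a vertex whose heaviest branch has weight at most $N-3$.
\end{itemize}

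I expect the main obstacle to be this weighted version of the tree lemma, together with the small cases $M\le4$. I would first try the following. If some internal vertex has degree at least $4$, or is a degree-$3$ vertex whose branches each carry weight at least $1$ and two of whose branches carry total weight at least $3$, we are done directly. Failing that, re-run the attached-leaf count: every branch vertex with at most one attached leaf has its other two branches containing branch vertices, hence at least $2$ leaves each, so the complementary weight is at least $3$. For $M=3$ the medial axis has a single internal vertex whose gaps are the three weighted arcs. Here I would argue geometrically: the $A$-chain contributes at least one arc, and both gap-adjacent $B$-circles survive, so with $N\ge5$ the swallowed circles lie in a single gap that is at most $N-3$ provided $|A|\ge2$. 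The degenerate sub-case $|A|=1$ forces the unique $A$-circle to be touched with gaps $1$, $1$, and $N-2$; to rule it out I would use the non-adjacency of $a$ and $b$, which gives $|A|\ge2$ when working on bank $A$.
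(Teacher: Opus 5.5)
Your route is the paper's. You expand bank $B$ by $w$, work in the component $\FR_\ast$ of Lemma~\ref{lem:main}, read candidate circles off the internal vertices of $\MA(\FR_\ast)$, and account for hidden circles through $g_j=\ell_j+s_j$. But the case analysis on $M$ has a hole exactly where the tree lemma gives out.

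\textbf{The missing case $M=4$.} The case $M=3$ you discuss cannot occur. As you note, non-adjacency of the gap edges gives $|A|\ge 2$, and likewise $|B|\ge 2$, so the two gap-adjacent circles of $B$ are distinct. None of these circles is ever hidden, so $M\ge |A|+2\ge 4$. The small case that does occur is $M=4$, which forces $|A|=2$ and every circle of $B$ other than its two extremes to be swallowed. Your plan never treats it.
\begin{itemize}
\item With four leaves, Lemma~\ref{lem:treelemma} is unavailable.
\item Your fallback attached-leaf count also fails. With all degrees at most $3$ one gets $n_3=n_1-2=2$, and both branch vertices carry two attached leaves, so no contradiction arises.
\item You do not show that a degree-$3$ vertex meeting your weight criterion exists. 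As stated, that criterion should also require \emph{every} two branches, not some two, to weigh at least $3$.
\end{itemize}

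The repair is short. With four leaves, either some vertex has degree $4$, and then each branch is a single leaf whose complement holds three leaves of weight at least $1$. Or there are two degree-$3$ vertices, each carrying a pair of leaves. Since the four weights sum to $N\ge5$, one pair weighs at least $3$, so at the vertex carrying it every branch weighs at most $N-3$.

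The paper argues more concretely. All hidden circles form one run strictly between the two contributing circles $C_p,C_q$ of $B$, with $q-p=N-3$. The vertex at the far end of the medial-axis edge leaving their common corner then has gaps $N-3$ and two or three others summing to $3$.

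\textbf{The case $M\ge5$ is easier than you expect.} The weighted lemma you flag as the main obstacle is not one. Since $s_j\le N-M$, the unweighted bound $\ell_j\le M-3$ from the tree lemma already gives $g_j\le (M-3)+(N-M)=N-3$. This is the paper's Case~1, and your case $M=N$ is a special instance of it.

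\textbf{A step hidden in ``repeating the proof of Lemma~\ref{lem:tree}''.} The argument there that no contact point of a maximal disk is a corner uses the cusp geometry at the $t_i$. It does not cover transversal crossings, so it must be redone. Suppose a maximal disk made contact at a crossing $x$. Then $x$ would be the nearest point to $p$ on both circles, placing $p$ on two distinct rays from the centers through $x$. Hence $p=x$ and $\rho=0$, a contradiction.
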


\begin{proof}
	Let $\FR_\ast$ be the component of Lemma~\ref{lem:main}, and let $\MA = \MA(\FR_\ast)$. By Lemma~\ref{lem:shrink}, $\partial\FR_\ast$ consists of arcs of distinct circles in cyclic index order, meeting at corners which are tangency cusps or transversal crossings; at a crossing the free region occupies the sector outside both disks, whose angle is less than $\pi$, so all corners are sharp and $\FR_\ast$ is a simply connected domain of the class of Theorem~\ref{thm:ccm}. The analysis of Lemma~\ref{lem:tree} applies to $\MA$ with one modification. In the proof of Lemma~\ref{lem:tree}\ref{it:internal} we showed that no contact point of a maximal disk is a corner of the boundary; there every corner was a tangency cusp, whereas here a corner may also be a transversal crossing, which requires an argument of its own. If a maximal disk centered at $p$ made contact at a transversal crossing $x$ of two expanded circles, then $x$ would be the point of each of the two circles nearest to $p$, forcing $p$ to lie on the ray through $x$ from each of the two centers; these rays are distinct, so $p = x$ and $\rho = 0$, a contradiction. Thus $\MA$ is a finite tree whose leaves are the corners of $\partial\FR_\ast$; every non-leaf vertex, of degree $k$, is the center of a disk touching exactly $k$ of the expanded circles, each of which contributes an arc to $\partial\FR_\ast$, since the contact points of a maximal disk of $\FR_\ast$ lie on $\partial\FR_\ast$; and the leaves of $\MA$ are distributed among the branches at such a vertex according to the boundary stretches, as in Lemma~\ref{lem:tree}\ref{it:branches}. By \eqref{eq:riveradd}, the circle bounding such a maximal disk can be added on the bank $A$, touching the corresponding circles directly (bank $A$) or across the river (bank $B$).

	Call a circle \emph{hidden} if it contributes no arc to $\partial\FR_\ast$. By Lemma~\ref{lem:main}, the circles of $A$ and the two circles of $B$ next to the gap edges are never hidden, so at least $|A| + 2 \ge 4$ circles contribute arcs, the gap edges being non-adjacent.

	\emph{Case 1: at least five circles contribute arcs.} The leaves of $\MA$ are the corners of $\partial\FR_\ast$, one between each pair of consecutive contributing arcs, so their number $L \ge 5$ equals the number of contributing circles. Consider a non-leaf vertex and one of its branches, running between consecutively touched circles $C_{i_j}$ and $C_{i_{j+1}}$. The branch contains one leaf for each contributing circle strictly between the two, plus one, while the gap $g_j$ counts one for \emph{every} circle strictly between the two, plus one; the gap therefore exceeds the leaf count of its branch by the number of hidden circles strictly between, and in particular by at most $N - L$. At the vertex provided by the tree lemma (Lemma~\ref{lem:treelemma}), every branch contains at most $L - 3$ leaves, so every gap is at most $(L-3) + (N-L) = N-3$. (When no circle is hidden, this is the proof of Theorem~\ref{thm:main} verbatim.)

	\emph{Case 2: exactly four circles contribute arcs.} The tree lemma, which requires at least five leaves, has nothing to offer here, and we locate the vertex by hand. Since the circles of $A$ and the two circles of $B$ next to the gap edges all contribute, four contributors force $|A| = 2$ and leave no contributor besides these four; the hidden circles thus form the single run strictly between the two contributing circles of $B$, say $C_p$ and $C_q$ with $q - p = |B| - 1 = N - 3$. By Lemma~\ref{lem:shrink}, the arcs of $C_p$ and $C_q$ are adjacent on $\partial\FR_\ast$; the edge of $\MA$ emanating from the corner between them terminates at a non-leaf vertex $I$, the tree $\MA$ having four leaves and hence not being a single edge, and $I$ touches $C_p$, $C_q$, and one or both circles of $A$. The gaps of $I$ are therefore $q - p = N-3$ together with two or three further gaps summing to $3$, and since $N \ge 5$, all of them are at most $N-3$.

	In either case we obtain a vertex all of whose gaps satisfy $g_j \le N-3$; add the circle bounding its maximal disk. The river remains realizable. Its course may be taken to run along the strip of width $w$ swept along the free-side boundary of the union $\bigcup_{C_i \in B} D_i$, from the gap on $\overline{v_av_{a+1}}$ to the gap on $\overline{v_bv_{b+1}}$; this strip is disjoint from the disks of $A$ and from the added circle, since both keep distance at least $w$ from every disk of $B$, and it lies inside $P$, squeezed between $\bigcup_{C_i \in B} D_i$ and the free region. Finally, subdividing along the segments from the vertex to the centers of the touched circles decomposes $P$, exactly as in Lemma~\ref{lem:split}, into polygons with $g_j + 2 \le N-1$ sides; each is again a polygon of touching circles, either plain (if its boundary chain stays within one bank together with the added circle) or carrying the river across it (if its chain contains circles of both banks, or a tangency across the river).
\end{proof}

\begin{remark}
	The two banks yield, in general, two genuinely different reductions of the same configuration, as shown in Figure~\ref{fig:river}. In a design context this freedom can be used to place the added flap on whichever side of the river is more convenient.
\end{remark}


\section{Polygons with several rivers}
\label{sec:multirivers}


A circle-packed design may send several rivers through the same axial polygon. Since the courses of distinct rivers are disjoint strips, the separations they induce on the circles do not cross. The rivers thus form a \emph{laminar} family, reflecting the tree structure of the underlying uniaxial base.

\begin{definition}\label{def:multiring}
	An \emph{$N$-gon of touching circles with rivers} consists of
	\begin{itemize}
		\item a simple polygon $P$ with vertices $v_1,\dots,v_N$;
		\item circles $C_i$ centered at $v_i$ with radii $r_i>0$;
		\item a finite, possibly empty set of \emph{markers}---formal points, each placed between two cyclically consecutive circles and regarded as lying on the edge of $P$ that joins their centers, so that circles and markers together form a single cyclic sequence; and
		\item a finite nonempty collection $\mathcal R$ of \emph{rivers}.
	\end{itemize}
	Each river $R \in \mathcal R$ carries a width $w_R>0$ and a partition of the circles and markers into two \emph{banks}, each a cyclically consecutive stretch of the sequence containing at least two circles; the river \emph{separates} two circles or markers if they lie on different banks of $R$, and \emph{crosses} the two edges of $P$ on which the sequence passes from one bank to the other. We require that no two rivers cross, in the sense that for any $R \ne R'$, some bank of $R$ is properly contained in some bank of $R'$; taking complements, the other bank of $R'$ is then properly contained in the other bank of $R$, so the containments come in a complementary pair and the condition is symmetric in $R$ and $R'$. Writing
	\begin{equation}
		\sigma_{ij} \;=\; \sum\,\bigl\{\, w_R : R \in \mathcal R \text{ separates } C_i \text{ from } C_j \,\bigr\}
	\end{equation}
	for the total width separating two circles, and $w_i = \sigma_{i,i+1}$ for the total width crossing the edge $\overline{v_iv_{i+1}}$, we require further that
	\begin{enumerate}
		\item\label{it:multitangent} $\dist{v_iv_{i+1}} = r_i + r_{i+1} + w_i$ for all $i$; and
		\item\label{it:multidisjoint} $\dist{v_iv_j} > r_i + r_j + \sigma_{ij}$ for all non-consecutive pairs $i, j$.
	\end{enumerate}
\end{definition}

Definition~\ref{def:riverring} is the case of a single river and no markers. Note that $\sigma$ satisfies the triangle inequality $\sigma_{ij} \le \sigma_{ik} + \sigma_{kj}$, since a river separating $C_i$ from $C_j$ separates $C_k$ from one of them.

The markers carry almost no geometry---an edge but no exact position on it, no width, and no mention in the two conditions. Their role is to keep parallel rivers apart. Two rivers running side by side with no circle between them would carry the same partition of the circles, and no partition of the circles alone can record which of the two strips runs first; the proper containments of Definition~\ref{def:multiring} forbid such a pair, and a marker placed between the two rivers---on one of the two edges that both of them cross---restores the proper containments and records the order. This costs no generality, as a design provides such a point in every gap between neighboring strips (Figure~\ref{fig:multiriver}), and nothing prevents marking them all.

\begin{figure}[tb]
	\centering
	\includegraphics[width=0.62\textwidth]{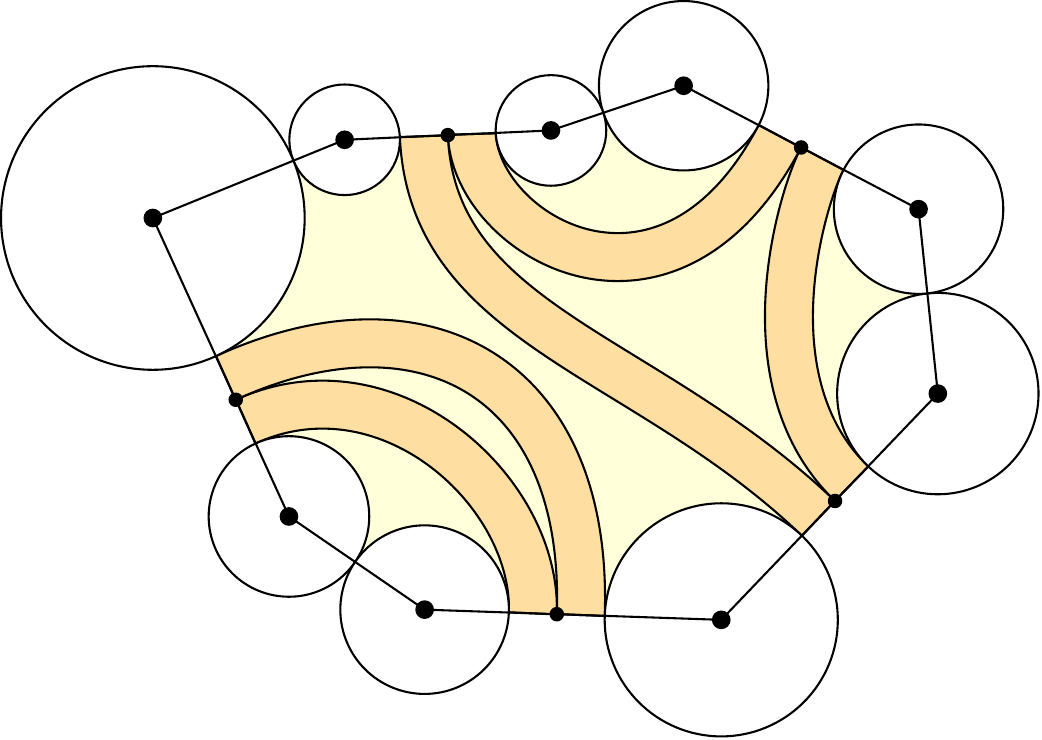}
	\caption{A $9$-gon of touching circles with five rivers, drawn as a designer would conceive them. The two parallel rivers at the lower left carry the same partition of the circles, and only the markers between them tell the two apart. The small black dots are the markers, one in each gap between neighboring strips on an edge. Of the remaining three rivers, the one around the right group of two circles and the one around the upper group have disjoint banks, and both banks are properly contained in the bank of the long river enclosing them both. The rivers cut the polygon into six regions, one for each face (Lemma~\ref{lem:facecount}); the region bounded by the latter three rivers contains no circle, and its three markers alone locate it in the cyclic sequence.}
	\label{fig:multiriver}
\end{figure}

The rivers divide the polygon into regions, and the added circle will be sought in one of them. A region is recorded combinatorially by naming, for each river $R$, the bank $B_R$ on its far side, as seen from the region. Accordingly, a \emph{face} of $\mathcal R$ is a choice of one bank $B_R$ of each river subject to the requirement that the chosen banks be pairwise nested or disjoint---a requirement most choices fail. For a region containing a circle or a marker, the choice is forced---$B_R$ is the bank not containing that member---and the chosen banks are automatically pairwise nested or disjoint, a common member outside two banks ruling out the crosswise containments of laminarity. The combinatorial formulation keeps track, in addition, of regions that contain neither, and frees us from carrying the courses of the rivers, which are not part of our data; the faces are exactly the regions of any drawing of the courses, as Lemma~\ref{lem:facecount} below confirms.

Call a bank $F_0$ \emph{innermost} if no river separates two of its members. Any bank that contains no other bank is innermost, since a river separating two of its members would, by laminarity, have a bank properly inside it.

\begin{lemma}\label{lem:faceexist}
	Choosing, for each river, its bank disjoint from an innermost bank $F_0$ yields a face of $\mathcal R$. In particular, faces exist.
\end{lemma}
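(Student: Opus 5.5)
The proof first checks that the choice is well defined and then verifies the face condition pair by pair.

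\emph{Well defined.} Fix an innermost bank $F_0$. For each river $R$, no river separates two members of $F_0$, so $R$ in particular does not. Hence all of $F_0$ lies in a single bank of $R$. Let $B_R$ be the other bank; it is disjoint from $F_0$. If $F_0$ is itself a bank of $R$, then $B_R$ is its complement, and the choice is still unambiguous.

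\emph{Face condition.} Take two rivers $R \ne R'$. We must show that $B_R$ and $B_{R'}$ are nested or disjoint. By the no-crossing condition of Definition~\ref{def:multiring}, some bank $X$ of $R$ is properly contained in some bank $Y$ of $R'$, and the complements satisfy $Y^c \subsetneq X^c$. So the four possibilities $X\subsetneq Y$, $X \cap Y^c = \emptyset$, $Y^c \subsetneq X^c$ and $X^c \cup Y = $ everything all hold. The case analysis is on which banks were chosen.
\begin{itemize}
\item If $B_R = X$ and $B_{R'} = Y$, the two banks are nested.
\item If $B_R = X$ and $B_{R'} = Y^c$, they are disjoint.
\item If $B_R = X^c$ and $B_{R'} = Y^c$, they are nested.
\item If $B_R = X^c$ and $B_{R'} = Y$, the union $X^c \cup Y$ is the whole cyclic sequence. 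Then $F_0$, being nonempty and disjoint from both chosen banks, would have to lie outside the whole sequence. This is a contradiction, so this case cannot occur.
\end{itemize}
So the chosen banks are pairwise nested or disjoint, and they form a face.

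\emph{Innermost banks exist.} Each bank is a nonempty, cyclically consecutive stretch containing at least two circles, and there are finitely many rivers. Choose a bank that is minimal under inclusion among all banks of all rivers. By the remark preceding the lemma, a bank containing no other bank is innermost. Hence innermost banks exist, and so do faces.

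The only delicate point is the fourth case. Nonemptiness of $F_0$ matters there, and it is guaranteed because every bank contains at least two circles.
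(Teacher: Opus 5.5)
Your proof is correct and follows the paper's argument. The paper likewise observes that both chosen banks avoid the members of $F_0$ and so cannot together cover every circle and marker, which excludes the crosswise containment and leaves the chosen banks nested or disjoint; your four-case analysis spells this out, and your existence paragraph (take a bank minimal under inclusion) is the remark preceding the lemma that the paper relies on for ``in particular, faces exist.''
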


\begin{proof}
	The choice is well defined. Indeed, no river separates two members of $F_0$, so each river holds $F_0$ within a single bank, and its other bank is the one disjoint from $F_0$. Now consider the chosen banks of two rivers. Both avoid the members of $F_0$, so the two banks do not cover all the circles and markers, which rules out the crosswise containments of laminarity---the complement of one chosen bank properly inside the other; the containment that laminarity does provide then makes the two banks nested or disjoint.
\end{proof}

In the language of the circle/river method, the faces are the regions of the design visible within the polygon, and the innermost banks give the regions at the leaves of its tree; but a face may equally sit deep in the design. This reading of the faces as the regions is exact, and we now make it precise. A \emph{drawing} of $\mathcal R$ places each marker at a point in the interior of its edge, and the rivers along pairwise disjoint strips---a strip being a closed topological disk, disjoint from the circles and the placed markers, which meets $\partial P$ in two arcs, one in the interior of each of the two edges its river crosses, and whose removal cuts $P$ into two pieces, each containing one bank, circles and markers alike. The \emph{regions} of the drawing are the connected components of the complement of the strips in $P$.

\begin{lemma}\label{lem:facecount}
	In any drawing of $\mathcal R$, sending a region to the banks beyond its strips---for each river, the bank cut off from the region by the strip of that river---is a bijection between the regions of the drawing and the faces of $\mathcal R$.
\end{lemma}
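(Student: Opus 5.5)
Write $m = \lvert\mathcal R\rvert$. The plan has four steps: show the map is well defined and lands in the faces, show it is injective, count the regions, and count the faces. Since injectivity will already be established, equality of the two counts gives surjectivity without constructing a region for each face.

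\emph{Well defined and a face.} Fix a region $X$. For each river $R$, the strip $S_R$ cuts $P$ into two pieces, each containing one bank, and $X$, being connected and disjoint from $S_R$, lies in one of them; the map sends $X$ to the bank $B_R$ in the other piece. To see that the chosen banks are pairwise nested or disjoint, take $R \ne R'$. The strip $S_{R'}$ is connected and disjoint from $S_R$, so it lies in one piece of $P \setminus S_R$. If it lies in the piece away from $X$, the side of $S_{R'}$ away from $X$ sits inside that piece, and then $B_{R'} \subseteq B_R$. If it lies on the side of $X$, the two far sides are separated, and $B_R$ and $B_{R'}$ are disjoint. Making this precise takes a little planar topology: each strip is a cross-cut of the disk $P$, so the far sides are nested or disjoint as subsets of $P$, and their intersections with the sequence inherit this.

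\emph{Injectivity.} Let $X \ne Y$ be regions. Choose a path from $X$ to $Y$ in $\operatorname{int} P$. It must cross some strip, since otherwise $X$ and $Y$ would lie in one component. I plan to argue that some strip $S_R$ separates $X$ from $Y$; for this river the far banks seen from $X$ and from $Y$ differ, so the two images differ. The argument is an induction adding one strip at a time. Each strip is a cross-cut joining two boundary arcs, so adding it splits exactly one current region into two, and these two pieces lie on opposite sides of the new strip. The same induction gives the region count directly: the drawing has exactly $m+1$ regions.

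\emph{Counting faces.} It remains to show there are exactly $m+1$ faces, and this is the main obstacle. The step I am least sure of is the uniqueness statement below. I will induct on $m$ using an innermost bank $F_0$, which exists by Lemma~\ref{lem:faceexist}; more precisely, I take $F_0$ to be a minimal bank, belonging to a river $R_0$.

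Let $\mathcal R' = \mathcal R \setminus \{R_0\}$, which is still laminar. I will show that every face of $\mathcal R'$ extends to a face of $\mathcal R$ in exactly one way, except for a single face that extends in exactly two ways.
\begin{itemize}
\item The two extensions arise for the face determined by $F_0$: the face of $\mathcal R'$ whose banks are the banks disjoint from $F_0$.
\item For any other face $\phi$ of $\mathcal R'$, one of the two banks of $R_0$ must be excluded. Minimality of $F_0$ and laminarity force every bank of $\mathcal R'$ either to contain $F_0$, or to be disjoint from it, or (if its complement is small) to contain the complement of $F_0$. I expect a case check on these possibilities to exclude one bank of $R_0$.
\end{itemize}
This gives $\#\text{faces}(\mathcal R) = \#\text{faces}(\mathcal R') + 1$. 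The base case is a single river, which has exactly two faces. Hence there are $m+1$ faces, matching the region count, and the injective map is a bijection.
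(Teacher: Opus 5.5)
Your outline follows the paper's proof step for step. Strips added one at a time give $m+1$ regions and a separating strip for any two of them, the assigned banks are checked to form a face, and the faces are counted by induction after deleting the river $R_0$ of a minimal bank $F_0$. However, the step you leave as an expectation carries the real content, and your trichotomy blurs exactly the case in which the count is false. Suppose some $R \ne R_0$ carried the same partition $\{F_0, F_0^{\mathrm c}\}$. Then all four choices of banks for $R$ and $R_0$ would be pairwise nested or disjoint, giving four faces against three regions. In particular, for a face of $\mathcal R'$ choosing $B_R = F_0$, both banks of $R_0$ are compatible, so no case check can exclude one. What rules this out is that laminarity demands \emph{proper} containment. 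Applied to $R$ and $R_0$, it gives a bank of $R$ properly inside $F_0$ or inside $F_0^{\mathrm c}$. The first contradicts minimality, so that bank lies properly inside $F_0^{\mathrm c}$, and its complement properly contains $F_0$. Hence every bank of every river of $\mathcal R'$ either \emph{properly} contains $F_0$ or is disjoint from it. Your third alternative, a bank containing $F_0^{\mathrm c}$, never occurs: by minimality it would equal $F_0^{\mathrm c}$, which is the same partition again. Likewise, ``contains $F_0$'' must be strict.

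With this dichotomy both halves of the count are immediate, and you need both. First, every face of $\mathcal R'$ extends by $B_{R_0} = F_0$, since each chosen bank is nested with or disjoint from $F_0$. Your bullets never show that the ``other'' faces extend at all. Second, a bank properly containing $F_0$ is incompatible with $F_0^{\mathrm c}$. It meets $F_0^{\mathrm c}$ and is not contained in it. It also cannot contain $F_0^{\mathrm c}$, since it would then be the whole sequence, while the other bank of its river contains at least two circles. So an extension by $F_0^{\mathrm c}$ forces every chosen bank to be disjoint from $F_0$, which singles out the face of Lemma~\ref{lem:faceexist}.

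There is also a smaller slip in your face check. When $S_{R'}$ lies on the region's side of $S_R$, the far sides need not be disjoint: $S_{R'}$ may separate the region from $S_R$, giving $B_R \subseteq B_{R'}$. The paper instead asks whether the connected piece $P_R$ lies beyond $S_{R'}$ or on the region's side of it, which covers both outcomes.
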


\begin{proof}
	Consider first the regions. Draw the strips one at a time. Each strip is connected and disjoint from the strips drawn before it, so it lies in a single region of those, a topological disk whose boundary the strip meets in its two arcs on $\partial P$; the strip therefore cuts that region into exactly two regions, again disks. Hence the regions number $|\mathcal R| + 1$ once all the strips are drawn, and any two distinct regions lie on opposite sides of at least one strip---the one whose drawing first put them in different regions.

	Next we check that the assignment of banks lands in the faces, and is injective. Fix a region and, for each river $R$, write $P_R$ for the piece of $P$ beyond the strip of $R$, so that the assigned bank $B_R$ is the one inside $P_R$. Given two rivers $R \ne R'$, the strip of $R'$ is connected and disjoint from the strip of $R$, so it lies in $P_R$ or in the other piece. If it lies in $P_R$, then the other piece is connected, avoids the strip of $R'$, and contains the region, so it lies in the piece of $P$ on the region's side of $R'$; passing to complements, $P_{R'}$ is covered by $P_R$ together with the strip of $R$, and $B_{R'} \subseteq B_R$ since circles and markers avoid strips. If instead the strip of $R'$ avoids $P_R$, then $P_R$, being connected, lies either in $P_{R'}$, giving $B_R \subseteq B_{R'}$, or in the piece on the region's side of $R'$, giving $B_R \cap B_{R'} = \emptyset$. The assigned banks are thus pairwise nested or disjoint, and the assignment is a face. It is also injective, since two regions on opposite sides of the strip of some river are assigned the two different banks of that river.

	It remains to show that the faces, too, number exactly $|\mathcal R| + 1$; this part is purely combinatorial and makes no reference to the drawing. We argue by induction on the number of rivers, a single river having exactly two faces. Suppose $|\mathcal R| \ge 2$. Among all banks of all rivers, let $X_0$ be one minimal with respect to inclusion, let $R_0$ be its river, and put $\mathcal R' = \mathcal R \setminus \{R_0\}$. Discarding the bank chosen for $R_0$ turns every face of $\mathcal R$ into a face of $\mathcal R'$, so it suffices to count, for each face of $\mathcal R'$, its extensions back to $\mathcal R$.

	Consider a river $R \ne R_0$. Neither bank of $R$ is contained in $X_0$---a proper containment would contradict the minimality of $X_0$, and equality would make $R$ and $R_0$ carry the same partition, leaving no proper containment between their banks---so, by laminarity, $X_0$ is contained in one bank of $R$ and disjoint from the other. In particular, whichever bank a face of $\mathcal R'$ chooses for a river, the chosen bank is nested with or disjoint from $X_0$, and every face of $\mathcal R'$ extends to a face of $\mathcal R$ by the choice $B_{R_0} = X_0$.

	Now let a face of $\mathcal R'$, with chosen banks $B_R$, extend instead by the choice $B_{R_0} = X_0^{\mathrm c}$. Of the three ways a bank $B_R$ can be compatible with $X_0^{\mathrm c}$, two are ruled out by the preceding paragraph---$B_R$ disjoint from $X_0^{\mathrm c}$ would put $B_R$ inside $X_0$, and $X_0^{\mathrm c} \subseteq B_R$ would put $B_R^{\mathrm c}$ there---leaving $B_R \subseteq X_0^{\mathrm c}$. Every river other than $R_0$ must therefore choose its bank disjoint from $X_0$, which determines the face completely; and this assignment is indeed a face, being the choice of Lemma~\ref{lem:faceexist} for $F_0 = X_0$, a bank innermost by minimality. Hence exactly one face of $\mathcal R'$ extends by $X_0^{\mathrm c}$, and the faces of $\mathcal R$ number $(|\mathcal R'| + 1) + 1 = |\mathcal R| + 1$ in total. An injection between finite sets of the same size is a bijection, which proves the lemma.
\end{proof}

Fix a face of $\mathcal R$, with chosen banks $B_R$, and set
\begin{equation}
	W_i \;=\; \sum\,\bigl\{\, w_R : C_i \in B_R \,\bigr\},
\end{equation}
the total width of the rivers separating $C_i$ from the face. The set $F = \{\, C_i : W_i = 0 \,\}$ of the circles lying \emph{on} the face may consist of several cyclic runs, or be empty; no river separates two of its circles, since a chosen bank contains neither. For a circle $C$ with center $p \in \operatorname{int} P$ and radius $\rho>0$, we say that $C$ \emph{can be added on the face} if
\begin{equation}\label{eq:multiadd}
	d_i(p) \;\ge\; \rho + W_i \quad \text{for all } i,
\end{equation}
and that it \emph{touches} $C_i$---directly if $W_i = 0$, across the rivers separating $C_i$ from the face if $W_i > 0$---when equality holds; in the latter case the segment $\overline{pv_i}$ has length $\rho + W_i + r_i$, matching the combined width of the rivers crossing it. Subdividing along the segments from $p$ to the touched centers then decomposes $P$ as in Lemma~\ref{lem:split}, with the count of \eqref{eq:gap} unchanged.

Definition~\ref{def:multiring} requires both banks of every river to contain at least two circles. This loses no generality. A river $R$ whose one bank would contain a single circle $C_i$ can be \emph{collapsed} into it, enlarging $r_i$ by $w_R$ and deleting $R$ while keeping all the markers, which decreases $\sigma_{ij}$ by exactly $w_R$ for every $j \ne i$ and hence preserves both conditions of the definition; moreover, every face of the collapsed family is also a face of the original one, completed by choosing for $B_R$ the bank of $C_i$---by laminarity some bank of $R$ is properly contained in some bank of any other river, and it must be the bank of $C_i$, no bank holding two circles fitting inside it; the bank of $C_i$ is thus contained in one bank, hence contained in or disjoint from each bank, of every other river---with the same widths $W_j$ for $j \ne i$, while $d_i$ and $W_i$ decrease by the same amount $w_R$, so the circles that can be added on the face, and the circles they touch, are unchanged. This normalization is needed when the subdivision below is iterated, as a subdivided part may retain only one circle of some bank.

\begin{theorem}\label{thm:multiriver}
	For any $N$-gon of touching circles with rivers, with $N>4$, and any face of $\mathcal R$, it is possible to add a circle on that face, touching at least three of the circles---directly or across the rivers---so that the largest reduced polygon has, at most, $N-1$ sides.
\end{theorem}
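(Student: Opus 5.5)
We follow the proof of Theorem~\ref{thm:river}, with the single uniform expansion replaced by the face-dependent one. First we normalize by collapsing every river that has a single-circle bank, as described just before the statement. This changes neither the circles that can be added on the face nor the ones they touch. We then fix the face, with its banks $B_R$ and widths $W_i$, and expand continuously. For $t\in[0,1]$ we set $\tilde r_i(t)=r_i+tW_i$ and let $\FR(t)$ be the closure of $P$ minus the expanded disks. By \eqref{eq:multiadd}, a circle can be added on the face exactly when it can be added, in the sense of Definition~\ref{def:add}, to the configuration at $t=1$.

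The first task is to generalize bank integrity (Lemma~\ref{lem:integrity}). Let $i,j$ be non-consecutive with $W_i+W_j\ge\sigma_{ij}$, and consider which rivers count. A river that separates $C_i$ from $C_j$ puts one of them in its chosen bank, so it contributes to $W_i+W_j$. A river whose chosen bank contains both contributes twice but does not separate them. The point is that expanded disks may overlap exactly when $W_i+W_j>\sigma_{ij}$, that is, when some chosen bank contains both circles; such overlaps are the analogue of the swallowing inside bank $B$. When $W_i+W_j\le\sigma_{ij}$, which with the face property means $W_i+W_j=\sigma_{ij}$, condition~\ref{it:multidisjoint} keeps the disks disjoint, and consecutive pairs become at most tangent. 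Using laminarity of the chosen banks (they are pairwise nested or disjoint), we would show the following. For every maximal chosen bank $B_R$, its circles and the mouths it bounds behave like the bank $B$ of Section~\ref{sec:rivers} relative to everything outside $B_R$. Mouths sit on edges crossed by rivers, with widths $w_i-t(\dots)$, and they close at $t=1$ exactly on edges whose crossing rivers all have the face on one side.

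Next comes the shrinking lemma (Lemma~\ref{lem:shrink}). Its proof uses only two facts: each expanded circle grows at speed $W_i$ while boundary points move no faster than the disks covering them, and a pinch splits a component by the Jordan-curve argument. The monotonicity ``death is permanent'' needs the comparison $\dist{p(t)v_m}\le\tilde r_m(t)$ once $p(t_0)\in\tilde D_m(t_0)$. This holds when $W_m\ge W_j$, but may fail when $W_m<W_j$. We expect this to be the main obstacle. The first fix to try is to expand in stages, one river at a time from the outermost chosen bank inward, that is, along the laminar order. In each stage only the circles of one bank grow, all at unit speed, so the argument of Lemma~\ref{lem:shrink} applies verbatim within each stage.

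At $t=1$ we then need the analogue of Lemma~\ref{lem:main}: a distinguished component $\FR_\ast$ whose boundary carries arcs of at least four circles. We take the component adjacent to the face. If $F$ is nonempty, it is the component along the chain of $F$'s arcs. If $F$ is empty, the face is located by its markers, and $\FR_\ast$ is the component incident to the closed mouths at those markers' edges; each mouth contributes its two flanking circles. To get at least four contributors we use the fact that every bank has at least two circles. Once $\FR_\ast$ is in hand, the rest copies the proof of Theorem~\ref{thm:river}. The corners of $\partial\FR_\ast$ are sharp, with contacts at transversal crossings excluded. Case~1, with $L\ge5$ contributors, uses the tree lemma together with the hidden-circle count $g_j\le(L-3)+(N-L)$. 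Case~2, with exactly four contributors, is handled by the explicit vertex at the corner between the two contributors that flank the hidden run. Finally, each river is rerouted along the collar of its chosen bank, and Lemma~\ref{lem:split} gives the decomposition.
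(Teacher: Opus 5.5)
\textbf{There is a genuine gap: your stages run in the wrong order.} The overall plan matches the paper's: staged absorption, a staged shrinking lemma, a distinguished component, then the tree lemma with the two cases of Theorem~\ref{thm:river}. But you absorb from the outermost (largest) chosen bank inward, and the whole argument depends on the opposite order.

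To see the failure, take an outer river $R$ with chosen bank $M$ and an inner river $R'$ with chosen bank $B'\subsetneq M$. Take an edge $\overline{v_cv_{c+1}}$ crossed only by $R'$, with $C_c\in B'$ and $C_{c+1}\in M\setminus B'$, so $\dist{v_cv_{c+1}}=r_c+r_{c+1}+w_{R'}$. In your first stage both circles grow by $w_R$. Once $2w_R>w_{R'}$ the two disks overlap, and directions of $\tilde C_c$ die inside $\tilde D_{c+1}$. In the stage of $R'$, $C_c$ grows while $C_{c+1}$ rests. This is exactly your bad case $W_m<W_j$, merely postponed. As soon as $\tilde r_c^2+\tilde r_{c+1}^2>\dist{v_cv_{c+1}}^2$, the arc of $\tilde C_c$ covered by $\tilde D_{c+1}$ shrinks as $\tilde r_c$ grows. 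Dead directions near the crossing point of the two circles therefore revive as new free boundary. A concrete instance: all radii $1$, $w_{R'}=1$, $w_R=3$, other circles kept away from the crossing.

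So the claim that Lemma~\ref{lem:shrink} applies verbatim within each stage already fails inside the second stage. That proof needs moving disks never to meet resting ones.

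The fix is the reverse order: smallest chosen banks first, each river after every river whose chosen bank is properly contained in its own. This gives \eqref{eq:nested}: for $l<k$, either $B_l\subseteq B_k$ or $B_l\cap B_k=\emptyset$. Even then you still have three things to prove.
\begin{enumerate}
\item A separation estimate. The rivers that have already expanded $C_j\in B_k$ or $C_m\notin B_k$, together with $R_k$, all separate the pair. Hence a moving disk meets a resting one only in an end-of-stage tangency of a consecutive pair (Lemma~\ref{lem:separation}).
\item A mouth estimate. No disk enters the residual mouth of a non-incident edge (Lemma~\ref{lem:mouth}). This is needed for directions that leave $P$.
\item Permanence of death across stages, not just within one. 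Two disks first meet only while co-expanding in some stage $k$. By \eqref{eq:nested}, whenever $C_j$ moves later, $C_m$ moves too.
\end{enumerate}

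\textbf{The distinguished component is also not established.} $F$ may consist of several cyclic runs separated by maximal chosen banks, or it may be empty. A face also need not carry markers: three rivers with pairwise disjoint chosen banks covering all circles give a face with neither circles nor markers. What you need is that the collars of all runs of $F$, all junction cusps, and arcs of both extremes of every maximal bank lie on one component.

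The paper gets this from a front argument. The union $Y$ of the expanded disks of a maximal bank meets $\partial P$ exactly in the stretch between its two junction cusps. Unicoherence of the closed disk then makes the frontier of the component of $\overline P\setminus Y$ containing the complementary stretch connected. So each maximal bank presents a single chain of free arcs from one junction cusp to the other.

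Without such a statement, nothing in your sketch rules out the junction cusps bordering different components. For example, with $F=\emptyset$ and two maximal banks, each cusp alone guarantees only two contributors to its component, and your count of four contributors on $\FR_\ast$ collapses.

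\textbf{A smaller point in Case~2.} The hidden circles can now form two runs, inside two different maximal banks, so ``the hidden run'' is not well defined. The corner flanking either nonempty run still works. Its gap through that run is $h+1$, and every other gap is at most $2+(N-4-h)\le N-3$.
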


The proof retraces Section~\ref{sec:rivers}. The rivers are absorbed into the circles, and the medial axis of what remains of the free region provides the added circle. The new difficulty is that the total expansions $W_i$ differ from circle to circle, and a disk expanding past a slower, already halted one could plough through it and emit \emph{new} free boundary, which the monotonicity argument of Lemma~\ref{lem:shrink} cannot tolerate. The remedy is to absorb the rivers \emph{one at a time, innermost first}. Then all disks in motion expand at a common unit rate at every moment, and we will see that they touch nothing else. We emphasize that the induction implicit in this staging is structural induction along a forest of rivers, defined below---equivalently, on its height---and not on the number of rivers. Absorbing a river does not produce a smaller instance of Definition~\ref{def:multiring}, since the expanded circles of its bank overlap, so no statement about configurations with fewer rivers is ever available; what is propagated from stage to stage is instead the boundary invariant of Lemma~\ref{lem:multishrink} below. The four absorption lemmas of this section extend those of Section~\ref{sec:rivers} one for one---Lemmas~\ref{lem:separation} and~\ref{lem:mouth} generalize the two parts of Lemma~\ref{lem:integrity}, Lemma~\ref{lem:multishrink} generalizes Lemma~\ref{lem:shrink}, and Lemma~\ref{lem:multimain} generalizes Lemma~\ref{lem:main}---and each pair coincides when the river is single; the reader may lean on the correspondence throughout.

The order of absorption is governed by a forest. The chosen banks are pairwise nested or disjoint; ordered by inclusion of these sets, the rivers thus form a forest of rooted trees, one for each maximal chosen bank, which we call the \emph{river forest} of the face, with $R'$ a \emph{descendant} of $R$ when $B_{R'} \subseteq B_R$ (parallel rivers, with equal banks, may be arranged in a chain arbitrarily); in the language of the circle/river method, this is the tree of the underlying base, rooted at the region of the face. Now order the rivers as $R_1, \dots, R_K$ so that every river comes after its descendants, that is, $B_{R_k} \subsetneq B_{R_l}$ implies $k<l$---for instance, by peeling the leaves of the river tree round after round---and abbreviate $B_k = B_{R_k}$, $w_k = w_{R_k}$. The choice of order gives the property used throughout:
\begin{equation}\label{eq:nested}
	\text{for } l < k: \qquad B_l \subseteq B_k \quad\text{or}\quad B_l \cap B_k = \emptyset.
\end{equation}
Let $\tau_k = w_1 + \cdots + w_k$, and call the time interval $[\tau_{k-1}, \tau_k]$ \emph{stage $k$}. During stage $k$ exactly the circles of $B_k$ expand, at unit rate. Thus for $t = \tau_{k-1} + s$ with $s \in [0, w_k]$,
\begin{equation}
	\tilde r_i(t) \;=\; r_i + a_i(t),
	\qquad
	a_i(t) \;=\; \sum_{l<k,\ C_i \in B_l} w_l \;+\; s\,[\,C_i \in B_k\,],
\end{equation}
where $[\,\cdot\,]$ is the Iverson bracket, which equals $1$ if the condition holds and $0$ otherwise. At the final time $\tau_K$ every circle has absorbed exactly the rivers separating it from the face, so $\tilde r_i = r_i + W_i$, and by \eqref{eq:multiadd} a circle can be added on the face exactly when it can be added to this final expanded configuration in the sense of Definition~\ref{def:add}. Let $\FR(t)$ denote the free region of the expanded disks, as before. On each edge $e = \overline{v_cv_{c+1}}$, the \emph{residual mouth} at time $t$ is the set of points $q \in e$ with $\dist{qv_c} \ge \tilde r_c(t)$ and $\dist{qv_{c+1}} \ge \tilde r_{c+1}(t)$---a segment of length $w_e - a_c(t) - a_{c+1}(t)$ when this is positive, the part of the gap not yet closed.

The next two lemmas confine each stage---an expanding disk can meet neither a resting disk nor a residual mouth. Both proofs amount to bookkeeping with \eqref{eq:nested}.

\begin{lemma}[separation]\label{lem:separation}
	During stage $k$, at time $t = \tau_{k-1}+s$, for all $C_j \in B_k$ and $C_m \notin B_k$,
	\[
		\dist{v_jv_m} - \tilde r_j(t) - \tilde r_m(t) \;\ge\; w_k - s,
	\]
	with equality only if $C_j, C_m$ are consecutive and every river crossing their edge, other than $R_k$, is already absorbed. In particular, an expanding disk meets a resting disk only at the very end of a stage, in a tangency of a consecutive pair.
\end{lemma}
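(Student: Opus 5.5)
The plan is to compare the clearance guaranteed by Definition~\ref{def:multiring}\ref{it:multitangent}--\ref{it:multidisjoint} with the expansion the two circles have absorbed by time $t$. Write $\dist{v_jv_m} \ge r_j + r_m + \sigma_{jm}$. This holds with equality when $C_j, C_m$ are consecutive, since then $\sigma_{jm} = w_j$ or $w_m$ is the width crossing their edge, and strictly otherwise. By the formula for $\tilde r_i(t)$, and because $C_j \in B_k$ while $C_m \notin B_k$,
\[
	\dist{v_jv_m} - \tilde r_j(t) - \tilde r_m(t) \;\ge\; \sigma_{jm} - s - \sum_{l<k,\ C_j \in B_l} w_l - \sum_{l<k,\ C_m \in B_l} w_l .
\]
It therefore suffices to show that $\sigma_{jm} \ge w_k + \sum_{l<k,\,C_j\in B_l} w_l + \sum_{l<k,\,C_m\in B_l} w_l$. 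I will do this by exhibiting, for each river appearing on the right, a distinct river among those counted in $\sigma_{jm}$.

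The key step is the bookkeeping with \eqref{eq:nested}. First, $R_k$ separates $C_j$ from $C_m$ by assumption. Next take $l<k$ with $C_j \in B_l$. The alternative $B_l \cap B_k = \emptyset$ is impossible because $C_j$ lies in both, so $B_l \subseteq B_k$. Hence $C_m \notin B_l$, and $R_l$ separates the pair. Now take $l<k$ with $C_m \in B_l$. Then $B_l \subseteq B_k$ would put $C_m$ in $B_k$, so $B_l \cap B_k = \emptyset$. Hence $C_j \notin B_l$, and again $R_l$ separates the pair. The two index sets $\{l<k : C_j \in B_l\}$ and $\{l<k : C_m\in B_l\}$ are disjoint, since in the first case $C_m \notin B_l$. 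Moreover neither contains $k$. So the rivers listed are distinct separating rivers, and their widths sum to at most $\sigma_{jm}$, which proves the inequality.

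For the equality clause, equality throughout requires two things. It requires equality in the definition, which forces $C_j, C_m$ to be consecutive. It also requires that no separating river be left uncounted. A river separating two consecutive circles is exactly a river crossing their edge, so every such river other than $R_k$ must be some $R_l$ with $l<k$ containing $C_j$ or $C_m$ in its chosen bank. That is, it must already be absorbed. For the final sentence of the lemma, note that an expanding disk belongs to a circle of $B_k$ and a resting disk to a circle outside $B_k$. For $s<w_k$ the right-hand side $w_k - s$ is positive, so the two disks are disjoint. At $s = w_k$ they can meet only when equality holds, which means a tangency of a consecutive pair.

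I expect no real obstacle. The only delicate point is the case analysis showing that each earlier absorbed river genuinely separates $C_j$ from $C_m$ and is counted only once, and this is exactly where the ordering property \eqref{eq:nested} is used.
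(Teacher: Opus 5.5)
Your proof is correct and follows essentially the same route as the paper. Like the paper, you use \eqref{eq:nested} to show that $R_k$ and the earlier rivers that have expanded $C_j$ or $C_m$ are pairwise distinct rivers separating the pair, so that $\sigma_{jm} \ge (a_j(t)-s) + a_m(t) + w_k$; you then combine this with $\dist{v_jv_m} \ge r_j + r_m + \sigma_{jm}$ and read off the equality case from consecutiveness and the positivity of the widths, exactly as the paper does, with a slightly more explicit distinctness argument and treatment of the final sentence.
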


\begin{proof}
	We claim that the rivers absorbed before stage $k$ that have expanded $C_j$ or $C_m$, together with $R_k$, are pairwise distinct and all separate $C_j$ from $C_m$. Distinctness is clear, since a river expands the circles of only one of its banks, and $R_k$ is not yet absorbed. If $R_l$ ($l<k$) has $C_j \in B_l$, then $B_l \subseteq B_k$ by \eqref{eq:nested}, so $C_m \notin B_l$ and $R_l$ separates the pair. If $R_l$ has $C_m \in B_l$, then $B_l \subseteq B_k$ is impossible, so $B_l \cap B_k = \emptyset$ by \eqref{eq:nested}, so $C_j \notin B_l$ and again $R_l$ separates. Finally $R_k$ separates, since $C_j \in B_k \not\ni C_m$. Consequently
	\[
		\sigma_{jm} \;\ge\; \bigl(a_j(t) - s\bigr) + a_m(t) + w_k,
	\]
	and combining with $\dist{v_jv_m} \ge r_j + r_m + \sigma_{jm}$ gives the bound. For non-consecutive pairs the latter inequality is strict. For consecutive pairs it is an equality with $\sigma_{jm} = w_e$, the rivers separating the pair being exactly those crossing their edge $e$; equality in the bound then forces every unabsorbed river crossing $e$ other than $R_k$ to be absent.
\end{proof}

\begin{lemma}[mouth integrity]\label{lem:mouth}
	At no time does a disk $\tilde D_j(t)$ contain a point of the residual mouth of an edge not incident to $v_j$.
\end{lemma}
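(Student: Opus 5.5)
We generalize the computation in Lemma~\ref{lem:integrity}\ref{it:mouth}. Fix a time $t$ in stage $k$, an edge $e = \overline{v_cv_{c+1}}$ not incident to $v_j$, and a point $q$ of its residual mouth. By the definition of the residual mouth, $q$ lies on $e$ with $\dist{qv_c} \ge \tilde r_c(t)$ and $\dist{qv_{c+1}} \ge \tilde r_{c+1}(t)$. Since $\dist{qv_c} + \dist{qv_{c+1}} = r_c + r_{c+1} + w_e$, this gives
\[
	\dist{qv_c} \;\le\; r_c + w_e - a_{c+1}(t),
	\qquad
	\dist{qv_{c+1}} \;\le\; r_{c+1} + w_e - a_c(t).
\]
Suppose $q \in \tilde D_j(t)$, that is, $\dist{qv_j} \le r_j + a_j(t)$. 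The triangle inequality then yields
\[
	\dist{v_jv_c} \;\le\; r_j + r_c + a_j(t) + w_e - a_{c+1}(t).
\]
Since $C_j$ is consecutive to at most one of $C_c, C_{c+1}$, we pick an endpoint $m \in \{c, c+1\}$ non-consecutive to $C_j$; if both are non-consecutive, we keep the choice free, to be fixed below. Let $m'$ denote the other endpoint. Definition~\ref{def:multiring}\ref{it:multidisjoint} gives $\dist{v_jv_m} > r_j + r_m + \sigma_{jm}$. To reach a contradiction it therefore suffices to prove
\[
	a_j(t) + w_e - a_{m'}(t) \;\le\; \sigma_{jm}.
\]

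We prove this inequality by bookkeeping over rivers. Split $w_e$ into the widths of the rivers crossing $e$, which are exactly the rivers separating $C_c$ from $C_{c+1}$. Split $a_j(t)$ and $a_{m'}(t)$ into (partial) widths of rivers already being absorbed, counting the current river $R_k$ with weight $s$. We show that each river $R$ contributes at most $w_R$ to the left side when it separates $C_j$ from $C_m$, and at most $0$ otherwise. There are two cases.

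\emph{$R$ does not cross $e$.} Then $C_m$ and $C_{m'}$ lie on the same bank. If $R$ has expanded $C_{m'}$, its contribution is at most $(\text{share in } a_j) - (\text{share in } a_{m'}) \le 0$, since both terms are governed by the same stage parameter whenever $C_j$ is also in the absorbed bank. If $R$ has expanded $C_j$ but not $C_{m'}$, then $C_j$ and $C_{m'}$ lie on opposite banks of $R$, so $R$ separates $C_j$ from $C_m$ and contributes at most $w_R$.

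\emph{$R$ crosses $e$.} It contributes $w_R$ from $w_e$, plus its shares in $a_j$ and $-a_{m'}$. If $C_j$ lies on the bank of $C_{m'}$, then $R$ separates $C_j$ from $C_m$, and we need $\text{share}_j - \text{share}_{m'} \le 0$. This holds because the two circles lie on the same bank and so expand together. If $C_j$ lies on the bank of $C_m$, then $R$ does not separate them, and we need $\text{share}_j + w_R - \text{share}_{m'} \le 0$. This requires $R$ to have fully expanded $C_{m'}$ and not $C_j$ at all. The bank of $C_{m'}$, however, may not have absorbed $R$ yet.

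This second case is the main obstacle, and it is where the freedom in choosing $m$ is used. The key observation is that the crossing rivers of $e$ are nested: their banks containing $C_c$ form a chain by laminarity, and by \eqref{eq:nested} the absorbed rivers among them expand only circles on one side. The order of absorption, innermost first, should make the expansions on each side of $e$ a prefix of this chain. I would choose $m$ so that $C_j$ lies on the side whose separating rivers have not expanded $C_j$ beyond what $\sigma_{jm}$ accounts for. If no good choice of $m$ exists, I would fall back on a direct distance argument that uses $\sigma_{j,m'}$ together with the triangle inequality for $\sigma$. I expect this case analysis on the position of $C_j$ relative to the chain of rivers crossing $e$ to be the delicate step; the remaining cases are routine verification of the river-by-river inequality. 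The consecutive case, where $C_j$ is adjacent to one endpoint, forces $m$ and is handled by the same inequality, exactly as in the single-river proof.
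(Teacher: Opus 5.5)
Your river-by-river bookkeeping is correct as far as it goes. However, the step you defer is where the proof lives, and the strategy you propose for it cannot work: the single-endpoint inequality $a_j(t) + w_e - a_{m'}(t) \le \sigma_{jm}$ can fail for \emph{both} choices of $m$. Take $N = 10$, $e = \overline{v_8v_9}$, and chosen banks $B_1 = \{C_2, C_3\}$, $B_S = \{C_5, \dots, C_8\}$, $B_{S'} = \{C_1, \dots, C_8\}$, absorbed in this order (consistent with \eqref{eq:nested}), and let $C_j = C_2$ at the end of stage~1. Then $a_j = w_1$, $a_8 = a_9 = 0$ and $w_e = w_S + w_{S'}$, while $\sigma_{j8} = w_1 + w_S$ and $\sigma_{j9} = w_1 + w_{S'}$. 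So the required inequality fails by $w_{S'}$ for $m = 8$ and by $w_S$ for $m = 9$. The only slack in your accounting, $w_k - s$ from the current river, has vanished at the end of the stage. Your fallback does not rescue this. The triangle inequality for $\sigma$ yields only $\sigma_{jc} + \sigma_{j,c+1} \ge w_e$, whereas the contradiction needs $\sigma_{jc} + \sigma_{j,c+1} \ge 2a_j(t) + w_e$.

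Two ingredients are missing. First, reduce to the case that $C_j$ is currently expanding, $C_j \in B_k$. Disks never shrink and residual mouths never grow, so a violation with $C_j$ resting already occurs at the last instant at which $C_j$ was expanding. Then, by \eqref{eq:nested}, every river that has expanded $C_j$ has its bank inside $B_k$, which is the structure your case analysis lacks. Second, split according to how many endpoints of $e$ lie in $B_k$.
\begin{itemize}
\item If both do, every river crossing $e$ has its bank properly inside $B_k$ and is already absorbed, so the mouth is empty.
\item If exactly one does, take $m$ to be the endpoint \emph{outside} $B_k$. Your obstructing rivers then cannot occur: a crossing river with $C_j$ and $C_m$ on one bank must have chosen the bank of $C_{m'}$, which lies properly inside $B_k$, so it has already expanded $C_{m'}$ fully.
\item If neither does, do not choose an endpoint at all. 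Add the two triangle inequalities through $v_c$ and $v_{c+1}$ to get $2\dist{v_jq} \ge \dist{v_jv_c} + \dist{v_jv_{c+1}} - \dist{v_cv_{c+1}}$. Each river crossing $e$ separates $C_j$ from exactly one endpoint, while $R_k$ and every river that has expanded $C_j$ (none of which crosses $e$) separate it from both. This gives the needed lower bound on $\sigma_{jc} + \sigma_{j,c+1}$. Since $C_j$ is non-consecutive to at least one endpoint, the bound is strict, so $\dist{v_jq} > \tilde r_j(t)$.
\end{itemize}
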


\begin{proof}
	Since disks never shrink and residual mouths never grow, a violation with $C_j$ resting entails a violation at the last prior instant at which $C_j$ was expanding (or at $t=0$, the start of stage $1$); so we may assume $C_j \in B_k$ during stage $k$, at time $t = \tau_{k-1}+s$. Let $q$ be a point of the residual mouth of the edge $e = \overline{v_cv_{c+1}}$ with $j \ne c, c+1$, and suppose $q \in \tilde D_j(t)$. We distinguish three cases by the position of the endpoints of $e$.

	\emph{Both $C_c, C_{c+1} \in B_k$.} Then the residual mouth of $e$ is empty, a contradiction. Indeed, the bank $B_R$ of a river crossing $e$ contains exactly one of $C_c, C_{c+1}$, so it neither contains $B_k$ nor is disjoint from it nor equals it, and, the chosen banks being pairwise nested or disjoint, $B_R \subsetneq B_k$; by the choice of order every river crossing $e$ is then absorbed, each having expanded $C_c$ or $C_{c+1}$, so that $a_c(t) + a_{c+1}(t) \ge w_e$.

	\emph{Exactly one endpoint, say $C_c$, lies in $B_k$} (so that $R_k$ crosses $e$). Using $\dist{qv_c} \ge \tilde r_c(t)$,
	\[
		\dist{v_jv_{c+1}} \le \dist{v_jq} + \dist{qv_{c+1}} \le \tilde r_j(t) + \dist{v_cv_{c+1}} - \tilde r_c(t)
		= r_j + r_{c+1} + a_j(t) + w_e - a_c(t).
	\]
	We claim that $\sigma_{j,c+1} \ge a_j(t) + w_e - a_c(t)$; note that the two current-stage contributions $s$ cancel here, as both $C_j$ and $C_c$ lie in $B_k$. Granting the claim, the display contradicts Definition~\ref{def:multiring}\ref{it:multidisjoint} if $C_j, C_{c+1}$ are non-consecutive. If they are consecutive ($j = c+2$), the display and the claim force equality throughout; in particular $q$ lies on the segment $\overline{v_{c+2}v_{c+1}}$, and so on two distinct edges of $P$---its own edge $e$ and this one---which meet only at their shared vertex; thus $q = v_{c+1}$, impossible for a point of the residual mouth, as $\tilde r_{c+1} > 0$. To verify the claim, say that a river \emph{has expanded} $C_i$ if it is already absorbed with $C_i$ in its bank $B$; we check that each river $S \in \mathcal R$ contributes to the right-hand side of the claim at most what it contributes to $\sigma_{j,c+1}$, i.e., that
	\[
		[S \text{ has expanded } C_j] + [S \text{ crosses } e] - [S \text{ has expanded } C_c]
		\le [S \text{ separates } C_j, C_{c+1}].
	\]
	Only the cases with positive left-hand side need checking; all use \eqref{eq:nested} and $C_{c+1} \notin B_k$. If $S$ has expanded $C_j$, then $B_S \subseteq B_k$, so $C_{c+1} \notin B_S$ and $S$ separates $C_j$ from $C_{c+1}$; moreover, if such an $S$ crosses $e$, then the endpoint it contains must be $C_c$, and the left-hand side is again $1$. If $S$ crosses $e$ and has not expanded $C_j$, then either $C_{c+1} \in B_S$, in which case $B_S$ is neither nested with $B_k$ (it misses $C_c \in B_k$ and contains $C_{c+1} \notin B_k$)---so it is disjoint from $B_k$, misses $C_j$, and $S$ separates; or $C_c \in B_S$, in which case $B_S \subsetneq B_k$ would mean that $S$ has expanded $C_c$ (left-hand side $\le 0$), so $B_S \supseteq B_k$, the chosen banks being pairwise nested or disjoint, so $C_j \in B_S \not\ni C_{c+1}$ and $S$ separates.

	\emph{Neither endpoint lies in $B_k$.} Then $R_k$ does not cross $e$ and separates $C_j$ from both endpoints; so does every river absorbed with $C_j$ in its bank, whose bank is contained in $B_k$ by \eqref{eq:nested} and therefore avoids $C_c, C_{c+1}$. Since, moreover, every river crossing $e$ separates $C_j$ from exactly one endpoint,
	\[
		\sigma_{jc} + \sigma_{j,c+1} \;\ge\; 2\bigl(a_j(t) - s\bigr) + 2w_k + w_e \;\ge\; 2a_j(t) + w_e.
	\]
	On the other hand, adding the triangle estimates through the two endpoints and using $\dist{qv_c} + \dist{qv_{c+1}} = \dist{v_cv_{c+1}} = r_c + r_{c+1} + w_e$,
	\[
		2\dist{v_jq} \;\ge\; \dist{v_jv_c} + \dist{v_jv_{c+1}} - r_c - r_{c+1} - w_e
		\;\ge\; 2r_j + \sigma_{jc} + \sigma_{j,c+1} - w_e
		\;\ge\; 2\tilde r_j(t),
	\]
	where the middle inequality is strict because $C_j$ cannot be consecutive to both endpoints. Hence $q \notin \tilde D_j(t)$.
\end{proof}

With the stages thus insulated, the shrinking lemma extends.

\begin{lemma}[shrinking, staged]\label{lem:multishrink}
	For every $t \in [0, \tau_K]$, each connected component of $\FR(t)$ is bounded by circular arcs of distinct circles---at most one arc per circle---occurring along the boundary in their cyclic index order, interleaved in the correct positions with residual mouth segments, at most one per edge.
\end{lemma}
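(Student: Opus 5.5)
The proof will follow that of Lemma~\ref{lem:shrink}, run stage by stage. The invariant is propagated by induction over the stages $k = 1,\dots,K$ and, within each stage, over its finitely many critical times. The base case $t=0$ repeats the start of the proof of Lemma~\ref{lem:shrink}. With the strict clearances of Definition~\ref{def:multiring}\ref{it:multidisjoint}, the proofs of Lemmas~\ref{lem:cover} and~\ref{lem:domain} go through verbatim, except on edges with $w_e>0$, where the uncovered gap is exactly the residual mouth. Hence $\FR(0)$ is a single component bounded by the $N$ arcs in cyclic order, interleaved with at most one mouth per edge.

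Within stage $k$, only the disks of $B_k$ move, all at unit rate. The first step is the monotonicity argument: boundary material only disappears. Fix $C_j \in B_k$ and follow the point $p(t)$ of $\tilde C_j(t)$ in a fixed direction $u$.
\begin{itemize}
\item If $p(t_0)$ lies in another moving disk $\tilde D_m$ with $C_m\in B_k$, the unit-speed computation of Lemma~\ref{lem:shrink} keeps it covered for all later $t$ in the stage.
\item By Lemma~\ref{lem:separation}, a moving disk never meets a resting disk until the very end of the stage, and then only in a tangency. So no moving point is ever swallowed by a resting disk, and no resting arc is ever covered by a moving disk.
\item A point exiting $P$ returns only onto a part of $\partial P$ already covered by a moving disk. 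It cannot return onto a mouth, by Lemma~\ref{lem:mouth}. It also cannot reach the part of an edge covered by resting disks, again by Lemma~\ref{lem:separation} and the edge covering of Lemma~\ref{lem:cover}. Coverage by a moving disk is then permanent by the first item.
\end{itemize}
Resting arcs keep their radius and lose points only to moving disks, which Lemma~\ref{lem:separation} forbids in the interior of the stage. Mouths only shrink. So within a stage, arcs and mouths can only shrink, split, or vanish.

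The second step handles the critical times exactly as before. Deleting a degenerate arc or mouth preserves the cyclic order. A new tangency at a free point occurs either between two moving disks, or, at the end of the stage, between a moving and a resting disk of a consecutive pair (Lemma~\ref{lem:separation}); in the latter case a mouth closes into a cusp. In either case the Jordan-curve argument of Lemma~\ref{lem:shrink} shows that the pinch splits the component. That argument uses only that the enclosed disk's center is a vertex of $P$ and that $\partial P$ is connected. Each new component then inherits a contiguous run of the boundary sequence. Because the invariant is a statement about the configuration at a single time, the end of stage $k$ serves directly as the start of stage $k+1$, with the newly moving set $B_{k+1}$. No new instance of the definition is needed.

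The main obstacle is showing that a split arc never has its pieces land out of order. A circle could in principle contribute two arcs to one component, and I need to rule this out. The plan is to argue, as in Lemma~\ref{lem:shrink}, that when an arc splits, the covering disk that splits it pinches the free region between the two pieces. This places the two pieces in different components, giving at most one arc per circle per component. To do this I would show that the splitting disk, together with the splitting arc's own disk, creates a pinch point, since two disks overlapping along an arc meet at a transversal crossing that becomes a free corner. The positional claim "at most one mouth per edge" follows because each edge's residual mouth is a single segment that only shrinks.
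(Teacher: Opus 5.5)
Your plan follows the paper's proof in outline: the same base case, the same permanence-of-death argument, and the same induction over critical times. You run the permanence argument stage by stage, using Lemma~\ref{lem:separation} at $s=0$ to see that a moving point covered at the start of a stage is covered by a co-moving disk. The paper instead uses \eqref{eq:nested} to see that the covering disk expands whenever $C_j$ does; either route works. The gaps are in your treatment of the critical events.

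First, the end-of-stage tangency of a moving and a resting consecutive pair $C_c, C_{c+1}$ is not a pinch. Externally tangent disks touch on the segment $\overline{v_cv_{c+1}}$, which is an edge of $P$, so the free region reaches the contact point from one side only. There are no two local sides for the Jordan-curve argument to separate. This event is simply the residual mouth of that edge shrinking to a cusp, which your deletion rule already covers. Conversely, for two co-moving disks the Jordan-curve argument needs more than the two facts you list. The contact point must lie in $\operatorname{int}P$, so that $J \subset \operatorname{int}P$ and $\partial P$ avoids $J$. The paper checks this with Lemma~\ref{lem:mouth}: a visible tangency point on $\partial P$ would be either an edge point covered by an endpoint disk, hence invisible, or a point of a residual mouth, which that lemma excludes.

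Second, your ``main obstacle'' is not a separate obstacle, and the mechanism you propose for it would fail. Once two disks overlap, their transversal crossings are ordinary corners of angle less than $\pi$, not pinch points, and they disconnect nothing. What you need instead has two parts. A free arc loses interior points only by being covered; your third bullet reduces exit from $P$ to covering. And the first contact of a covering disk $\tilde D_m(t)$ with the interior of a free arc of $\tilde C_j(t)$ is an external tangency of the two circles at a free point $x$, with free cusps on both sides of $x$. A proper overlap along an arc of $\tilde C_j(t)$ would, by continuity, have met the free arc earlier. That first contact is exactly the pinch event of your second step. The Jordan-curve argument then puts the two pieces of the split arc in different components, and they stay apart because $\FR(t)$ only shrinks. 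With these corrections your argument coincides with the paper's.
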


\begin{proof}
	At $t = 0$ the proofs of Lemmas~\ref{lem:cover} and~\ref{lem:domain} apply verbatim except on the edges with $w_e > 0$, where the mouths appear. Thus $\FR(0)$ is a single component bounded by the $N$ arcs in cyclic order and the initial mouths. We show that this structure can only degenerate, never reorganize.

	First, death is permanent, across all stages. Fix $C_j$ and a unit vector $u$, and follow the point $p(t)$ with $\overrightharpoon{p}(t) = \overrightharpoon{v_j} + \tilde r_j(t)\,u$, which moves, at unit speed, exactly while $C_j$ expands. Suppose $p(t_0) \in \tilde D_m(t_0)$ for some $m \ne j$, and take $t_0$ minimal. At $t_0$, at least one of $C_j$ and $C_m$ is expanding, for otherwise both $p$ and $\tilde D_m$ would be stationary just before $t_0$, contradicting the minimality of $t_0$. In fact both are expanding, since the point $p(t_0)$ lies in $\tilde D_j(t_0) \cap \tilde D_m(t_0)$, and by Lemma~\ref{lem:separation} an expanding disk meets no resting disk. So $C_j, C_m \in B_k$ for the current stage $k$. At any later time at which $C_j$ expands, during a stage $l \ge k$ with $C_j \in B_l$, property \eqref{eq:nested} gives $B_k \subseteq B_l$, so $C_m$ expands as well; therefore $\dist{p(t)\,v_m} \le \tilde r_m(t)$ persists exactly as in the proof of Lemma~\ref{lem:shrink}---while both circles expand, $p$ moves with unit speed and $\tilde D_m$ grows with unit speed, and while $C_j$ rests, $p$ is stationary and disks never shrink. If instead $p(t_0)$ exits $P$, then at the first later return of $p$ to $\partial P$ the return point lies on an edge and on no residual mouth (it lies on $\tilde C_j \subset \tilde D_j$, and Lemma~\ref{lem:mouth} applies, the incident edges being met only along themselves), and is therefore within $\tilde r_c$ of an endpoint $v_c$ of its edge, and we are back in the previous case, with $\tilde D_c$ as the covering disk. Hence every boundary arc---viewed as the set of directions $u$ it spans, its radius never decreasing---can only shrink, split, or vanish, and no circle ever contributes new boundary material; mouths likewise only recede, their endpoints advancing with the expanding end disks, unreachable by all other disks.

	Second, the combinatorial structure changes only at finitely many critical times, namely when an arc or mouth degenerates to a point, or when two expanded circles become tangent at a point of the closure of the free region. By Lemma~\ref{lem:separation} such a tangency involves two co-expanding circles, or a consecutive pair at the end of a stage, where their residual mouth closes into a cusp. A visible tangency point of two co-expanding circles lies in the interior of $P$; on $\partial P$ it would be a covered edge point, invisible, or a point of a residual mouth, excluded by Lemma~\ref{lem:mouth}. The pinching argument of Lemma~\ref{lem:shrink} then applies verbatim---the component splits, and each part inherits a contiguous run of the boundary sequence. In all cases the invariant survives, and the lemma follows by induction over the critical times; the transitions between stages are not events, the radii being continuous.
\end{proof}

\begin{lemma}\label{lem:multimain}
	Let $M_1, \dots, M_d$ be the maximal chosen banks (pairwise disjoint), call the edges joining a circle of some $M_a$ to a circle outside it the \emph{junction edges}, and call the two circles of $M_a$ adjacent to its two junction edges its \emph{extremes}. At $t = \tau_K$, every junction edge carries a tangency cusp of its two endpoint disks, and some component $\FR_\ast$ of $\FR(\tau_K)$ carries on its boundary all the arcs of the circles of $F$, all the junction cusps, and, for each $M_a$, arcs of both of its extremes.
\end{lemma}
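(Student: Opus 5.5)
We will follow the proof of Lemma~\ref{lem:main}, with Lemmas~\ref{lem:separation} and~\ref{lem:mouth} in place of Lemma~\ref{lem:integrity}. The argument has three steps.

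\emph{Step 1: junction cusps.} Let $e = \overline{v_cv_{c+1}}$ be a junction edge, with $C_c \in M_a$ and $C_{c+1} \notin M_a$. Every river crossing $e$ has a chosen bank containing exactly one endpoint. Such a bank is nested in or disjoint from $M_a$, and $M_a$ is maximal. So if the bank contains $C_c$, it lies inside $M_a$. If it contains $C_{c+1}$, it is disjoint from $M_a$, being unable to contain $M_a$ without equalling a maximal bank that holds $C_{c+1}$. In either case the river is absorbed by time $\tau_K$ and has expanded exactly one endpoint by its width. Summing gives $a_c + a_{c+1} = w_e$ at $\tau_K$. Definition~\ref{def:multiring}\ref{it:multitangent} then makes the two final disks tangent on $e$, so the residual mouth of $e$ closes to a single point, which is the cusp. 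More generally, for every edge the final radii satisfy $a_c + a_{c+1} = w_e$, because each river crossing any edge has been absorbed. Hence all mouths are closed at $\tau_K$.

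\emph{Step 2: the chain of $F$ survives.} Circles of $F$ never expand, since $W_i = 0$. We must show that no expanded disk covers a point of the arc $\mathcal{A}_m$ of a circle $C_m \in F$, apart from the endpoint tangencies. A disk $\tilde D_j$ with $C_j \in B_k$ expands during stage $k$, while $C_m \notin B_k$. Lemma~\ref{lem:separation} gives $\dist{v_jv_m} \ge \tilde r_j + \tilde r_m$, with equality only at the end of a stage for a consecutive pair. So $\tilde D_j$ never meets the interior of $D_m$, and touches $C_m$ at most at a tangency point on an edge. Points of $\mathcal{A}_m$ also never leave $P$. Consequently the whole arc $\mathcal{A}_m$ stays on $\partial\FR(\tau_K)$.

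We also need the free points near $F$ and the cusps to be connected. Walking along the boundary sequence of Lemma~\ref{lem:multishrink} from an arc of $F$, each step crosses either a cusp (a wedge), a tangency between consecutive $F$ circles, or a junction cusp into an extreme of some $M_a$.

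\emph{Step 3: a single component, and the main obstacle.} The hardest part is showing that a single component $\FR_\ast$ carries all of this when $F$ consists of several runs, or is empty. In that case the pieces are linked only through arcs of extremes, or of circles of $F$ that lie outside every maximal bank; the latter cannot occur, since a circle outside all maximal banks has $W_i = 0$. So between two consecutive maximal banks $M_a$ and $M_{a+1}$ in the cyclic order there is a run of $F$, possibly empty, bordered by two junction cusps. When the run is empty, $M_a$ and $M_{a+1}$ are adjacent across one junction edge, whose cusp joins an extreme of $M_a$ to an extreme of $M_{a+1}$.

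To conclude, we will track the component containing the free wedge at one junction cusp backwards in time, using Lemma~\ref{lem:multishrink}. We will argue that during the final stages, the only disks that move are those inside a single $M_a$. By Lemma~\ref{lem:separation} they touch nothing outside it. Therefore a pinch created by circles of $M_a$ separates only the portion of the boundary sequence strictly between the two extremes of $M_a$, namely its interior circles. This follows from the pinching argument: the pinch splits the sequence into a contiguous run lying within $M_a$, and the complementary part, which contains both extremes, since the two pinching circles both lie in $M_a$ and the junction cusps lie outside the run between them. By induction over the critical times, the component containing the junction wedges of all the $M_a$, together with $F$, is never split off. Its boundary therefore retains the arcs of $F$, every junction cusp, and arcs of both extremes of each $M_a$.

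The delicate point, where we expect most of the work, is verifying that a pinch between two co-expanding circles of $M_a$ can never isolate an extreme from its junction cusp. We will handle it by noting that the two pinching circles bound a contiguous run, and that the extreme lies at the end of the sequence of $M_a$, so it falls on the complementary side.
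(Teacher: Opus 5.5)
Your route is different from the paper's, and it is viable. You follow the component of $\FR(0)$ forward through the critical times of Lemma~\ref{lem:multishrink}. Every pinch is a tangency of two co-expanding circles of a single bank $B_k\subseteq M_a$, and one of the two cyclic runs between them lies in the proper stretch $M_a$, so a pinch cuts off only a run strictly inside $M_a$. The paper instead argues statically at $\tau_K$:
\begin{itemize}
\item strict separation $\sigma_{im}\ge W_i+W_m$ across each maximal bank;
\item a connected front of each $M_a$ from cusp to cusp, obtained from the unicoherence of $\overline P$;
\item a collar assembly through the cusp wedges.
\end{itemize}
Your pinch bookkeeping is sound, but two steps fail as written.

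\textbf{Step~1 only gives an inequality.} Your per-river count yields $a_c+a_{c+1}\ge w_e$, not equality. A river whose chosen bank contains \emph{both} endpoints does not cross $e$, is not counted, yet expands both circles. This is exactly why your ``more generally'' claim is false. On an edge inside $M_a$, the river of $M_a$ itself expands both ends, so the final disks overlap and cross transversally (compare Figure~\ref{fig:river}); the mouths are closed only because of the inequality. For a junction edge the equality, and hence the tangency, does hold. But it needs your own two observations applied to \emph{every} river that expands an endpoint, not just those crossing $e$:
\begin{itemize}
\item a chosen bank containing $C_c$ lies in $M_a$ and so omits $C_{c+1}$;
\item a chosen bank containing $C_{c+1}$ is disjoint from $M_a$ and so omits $C_c$.
\end{itemize}

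\textbf{Degeneration is not controlled.} Your induction controls pinches only. Entries of a boundary sequence also vanish through degeneration: an arc can be swallowed by its enlarged neighbours, as in Figure~\ref{fig:swallow}. A priori, a cusp could also be buried under a third disk. Nothing in your proposal stops the arc of an extreme from disappearing this way, or shows that the ``free wedge'' at a junction cusp is free. Yet the survival of the extremes is the real content of the lemma.

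The missing ingredient is Lemma~\ref{lem:mouth}. No expanded disk other than the two endpoint disks ever contains a point of the residual mouth of a junction edge. This includes its final one-point state, the cusp, because neither endpoint expands after that mouth closes. So a neighbourhood of the cusp meets only the two tangent disks, the wedge there is free, and it is bordered by an arc of the extreme. This is how the paper keeps the junction cusps, and with them the extremes, on $\partial\FR_\ast$.

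With these two repairs your tracking argument proves the lemma. It avoids unicoherence, but it depends on the event analysis inside the proof of Lemma~\ref{lem:multishrink}: finitely many critical times, each a pinch or a degeneration, and components only splitting into contiguous runs. The paper's static argument needs only the final configuration.
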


\begin{proof}
	Throughout the proof we use the following \emph{strict separation}. For $C_i \in M := M_a$ and $C_m \notin M$, every river whose chosen bank contains $C_i$ or $C_m$ separates the pair---in the first case the chosen bank is nested into $M$, which avoids $C_m$; in the second it can neither contain nor equal the maximal bank $M$, so it is disjoint from it and avoids $C_i$---so $\sigma_{im} \ge W_i + W_m$ and, at every time,
	\[
		\dist{v_iv_m} - \tilde r_i - \tilde r_m \;\ge\; \sigma_{im} - W_i - W_m \;\ge\; 0,
	\]
	strictly for non-consecutive pairs. In particular, the disks of $M$ never meet the disks of the other maximal banks, nor those of the circles of $F$, except that a consecutive pair across a junction edge may become tangent.

	\emph{Cusps.} Let $e = \overline{v_cv_{c+1}}$ be a junction edge, say $C_{c+1} \in M$ and $C_c \notin M$. A chosen bank containing $C_{c+1}$ is a subinterval of $M$ containing its extreme $C_{c+1}$, so $e$ is one of its two boundary edges and its river crosses $e$; likewise, a chosen bank containing $C_c$ is a subinterval, with extreme $C_c$, of the maximal bank on the other side (if $C_c \in F$ there is none). Hence every river that ever expands $C_c$ or $C_{c+1}$ crosses $e$; consequently $W_c + W_{c+1} = w_e$, the residual mouth of $e$ has, at every time, length equal to the total width of its unabsorbed rivers, and it closes into a tangency of $\tilde D_c$ and $\tilde D_{c+1}$ exactly when the last of them is absorbed---after which neither endpoint circle ever expands again, so the tangency persists to $\tau_K$. Its cusp point stays on the boundary of the free region. It is a limit of points of the residual mouth, and a closed disk containing no point of a residual mouth contains no limit of such points (Lemma~\ref{lem:mouth}).

	\emph{Fronts.} The goal of this step is to show that each maximal bank presents a single connected \emph{front} to the face---the union of its expanded disks borders the free region along one chain of free arcs, running from the cusp on one of its junction edges to the cusp on the other---so that, in particular, both extremes of the bank contribute arcs. Fix $M = M_a$, let $x, x'$ be the cusps on its two junction edges, and let
	\[
		Y \;=\; \bigcup_{C_i \in M} \bigl(\tilde D_i(\tau_K) \cap \overline P\bigr).
	\]
	First, $Y$ is connected. For consecutive circles of $M$, the point of their edge at distance $\tilde r_c$ from $v_c$ lies in both disks, since every river crossing an edge within $M$ is absorbed (its chosen bank, containing one endpoint, is a proper subinterval of $M$); so the sets $\tilde D_i \cap \overline P$, $C_i \in M$, chain up through their edges. And every point $y \in \tilde D_i \cap \overline P$ joins this chain within $Y$. If the segment $\overline{yv_i} \subset \tilde D_i$ leaves $P$, then, walking from $y$ toward $v_i$, its first crossing $z$ of $\partial P$ lies on an edge, in $\tilde D_i$; either that edge is incident to $v_i$ and the piece of it from $z$ to $v_i$ lies in $\tilde D_i \cap \overline P$, or $z$ lies on no residual mouth (Lemma~\ref{lem:mouth}) and is therefore covered by an endpoint disk $\tilde D_c$ of its edge, with $C_c \in M$ by strict separation, and the piece of the edge from $z$ to $v_c$ lies in $\tilde D_c \cap \overline P$.

	Second, $Y \cap \partial P$ is the closed stretch of $\partial P$ from $x$ to $x'$ behind $M$. It contains the covered parts of the two junction edges up to their cusps, the edges within $M$ (whose mouths are closed, as above), and the centers of the circles of $M$; conversely, no disk of $M$ contains the center of another circle (by strict separation, $\dist{v_iv_m} \ge \tilde r_i + \tilde r_m > \tilde r_i$), nor a point of an edge joining two circles outside $M$---such a point lies on a residual mouth, excluded by Lemma~\ref{lem:mouth}, or is covered by an endpoint disk, which the disks of $M$ avoid, junction pairs touching only at their cusps, on their own edges.

	Now let $V$ be the component of $\overline P \setminus Y$ containing the complementary stretch $\partial P \setminus (Y \cap \partial P)$, which is connected. The set $\overline P \setminus V$ is closed and connected, being the union of $Y$ with the remaining components of $\overline P \setminus Y$, the closure of each of which meets $Y$. Since $\overline P$ is homeomorphic to a closed disk, it is \emph{unicoherent}: whenever it is written as the union of two closed connected sets, their intersection is connected \cite[\S57]{Kuratowski1968}. Applied to $\overline V$ and $\overline P \setminus V$, whose intersection is the frontier of $V$, this shows that frontier to be connected. This frontier contains $x$ and $x'$, lies in $Y$, and meets $\partial P$ only at these two points; elsewhere it consists of points of the circles $\tilde C_i$, $C_i \in M$, interior to $P$, interior to no disk of $M$ (they lie on the boundary of $Y$) and to no other expanded disk (strict separation)---that is, of free boundary arcs of the circles of $M$, which near $x$ and $x'$ run along the disks of the two extremes.

	\emph{Assembly.} The circles of $F$ never expand, and no expanded disk ever meets their disks or arcs (strict separation, applied to the maximal bank of the expanded circle). The free points at sufficiently small positive distance from the arcs of the circles of $F$ and from the fronts just constructed form a connected set. Within each cyclic run of $F$, the collar passes through the interior wedges at the tangency cusps of the run, as in Lemma~\ref{lem:main}; and at each junction cusp, the interior wedge between the two tangent disks joins the collars or fronts on its two sides. Hence all of these free points lie in a single component $\FR_\ast$ of $\FR(\tau_K)$, whose boundary carries the arcs of all circles of $F$, every junction cusp, and arcs of both extremes of every maximal bank.
\end{proof}

\begin{proof}[Proof of Theorem~\ref{thm:multiriver}]
	Let $\FR_\ast$ be the component of Lemma~\ref{lem:multimain} and $\MA = \MA(\FR_\ast)$. By Lemma~\ref{lem:multishrink}, $\partial\FR_\ast$ consists of arcs of distinct circles in cyclic index order, meeting at corners which are tangency cusps or transversal crossings. No residual mouth segment survives at $\tau_K$. An edge joining two circles of $F$ is crossed by no river; a junction edge has closed into its cusp (Lemma~\ref{lem:multimain}); and every other edge joins two circles of the same maximal bank, so all the rivers crossing it are absorbed---their chosen banks being proper subintervals---each absorption expanding an endpoint circle of the edge, so that $a_c + a_{c+1} \ge w_e$ in the end. Crossings are indeed transversal, no two of the final circles being nested:
	\[
		|\tilde r_i - \tilde r_j| \;\le\; |r_i - r_j| + |W_i - W_j| \;<\; r_i + r_j + \sigma_{ij} \;\le\; \dist{v_iv_j},
	\]
	using $|W_i - W_j| \le \sigma_{ij}$, which holds because the rivers counted in one of $W_i, W_j$ but not the other separate $C_i$ from $C_j$. All corners are therefore sharp, $\FR_\ast$ is a simply connected domain of the class of Theorem~\ref{thm:ccm}, and the analysis of Lemma~\ref{lem:tree} applies with the modifications noted in the proof of Theorem~\ref{thm:river}. Thus $\MA$ is a finite tree whose leaves are the corners of $\partial\FR_\ast$, and every non-leaf vertex, of degree $k$, is the center of a maximal disk touching exactly $k$ of the final circles, with $d_i(p) = \rho + W_i$ at the touched circles and $d_i(p) \ge \rho + W_i$ at all others---by \eqref{eq:multiadd}, a circle that can be added on the face, touching the corresponding circles directly or across the rivers.

	The two cases of the proof of Theorem~\ref{thm:river} now go through with $F$ in place of the bank $A$. Here the circles of $F$ and the two extremes of every maximal bank are never hidden (Lemma~\ref{lem:multimain}), and these number at least four---if there is a single maximal bank, then $F$ contains the other bank of its river, which has at least two circles. When at least five circles contribute arcs, the first case applies verbatim, its accounting of gaps against leaves making no reference to the banks. When exactly four circles contribute, the hidden circles need no longer form a single run. They do lie in the interiors of the maximal banks, everything else---the circles of $F$ and the extremes---being never hidden, and four contributors leave room for the extremes of at most two maximal banks; the hidden circles therefore fill at most two runs. So we take instead the corner between the two contributing circles enclosing a \emph{largest} hidden run, of $h \ge 1$ circles, and the non-leaf vertex $I$ reached from it. The gap of $I$ through that run is $h + 1 \le N-3$, and every other branch of $I$ contains at most two of the remaining leaves and at most the remaining $N - 4 - h$ hidden circles, so every other gap is at most $2 + (N-4-h) \le N-3$.

	It remains to check that the subdivision is valid and that the rivers remain realizable. For the subdivision, the segment from $p$ to a touched center $v_i$ passes through the contact point $q_i$, and $\overline{pq_i} \subset \mathcal{D}_p \subseteq \FR_\ast \subseteq P$; the remaining piece $\overline{q_iv_i}$ is the radius of $\tilde D_i$ in the direction of $q_i$, and lies in $P$. Since $q_i$ is a free boundary point at the final time, the point $p(t)$ of its direction never left $P$---a direction that exits $P$ is dead forever by the proof of Lemma~\ref{lem:multishrink}---so the radius from $v_i + r_iu$ outward is swept by $p(t)$ within $P$, while the initial piece inside the original disk $D_i$ is a radial segment of the sector $D_i \cap P$ of Lemma~\ref{lem:domain}\ref{it:sector}, whose proof applies to the initial configuration verbatim. The segments therefore subdivide $P$ as in Lemma~\ref{lem:split}, into polygons with $g_j + 2 \le N-1$ sides. Each reduced polygon, endowed with the markers on its own edges and the rivers of $\mathcal R$ that separate two of its circles---the added circle $C$ counting as lying on the face, so that a river separates $C$ from $C_i$ exactly when $C_i \in B_R$, and a river left with a single circle of the part on one side being collapsed into it as described after \eqref{eq:multiadd}---is again a polygon of touching circles with rivers (or without, when no river survives); its new edges have length $\rho + r_i + W_i$, matching condition~\ref{it:multitangent}, since the rivers crossing the new edge $\overline{pv_i}$ are exactly those separating $C_i$ from the face; condition~\ref{it:multidisjoint} is inherited, and so is laminarity---restriction to the part preserves every containment, and where it turns a proper containment into an equality, everything telling the two rivers apart having fallen to the other parts, a fresh marker inserted between them, in the order of their original nesting, restores it. Finally, the rivers can be rerouted stage by stage. The course of $R_k$ may be taken to run along the strip swept during stage $k$ by the free-side boundary of $\bigcup_i \tilde D_i(t)$, a strip of width $w_k$ entering and leaving $P$ through the residual mouths of the two edges crossed by $R_k$, which recede at unit rate during exactly this stage. These strips are pairwise disjoint, the union of the disks being nondecreasing and the time windows disjoint; each separates precisely the circles of $B_k$ from the others; they avoid the disks of $F$ (Lemma~\ref{lem:separation}) and keep distance at least $\rho$ from the added circle, since $d_i(p) \ge \rho + W_i \ge \rho + a_i(t)$ for all $i$ and $t$.
\end{proof}

\begin{corollary}\label{cor:multibound}
	Every $N$-gon of touching circles with rivers can be subdivided into triangles and quadrilaterals by adding at most $N-4$ circles.
\end{corollary}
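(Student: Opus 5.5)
The plan is to repeat the induction of Corollary~\ref{cor:bound}, using Theorem~\ref{thm:multiriver} in place of Theorem~\ref{thm:main} as the reduction step. We prove by induction on $N$ that at most $\max(0,N-4)$ added circles suffice for every $N$-gon of touching circles with rivers.

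Base case, $N\le4$. The polygon is already a triangle or quadrilateral, and there is nothing to do. This holds whether or not it carries rivers, since with rivers it is simply a triangle or quadrilateral with rivers.

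Inductive step, $N\ge5$. Fix any face of $\mathcal R$; one exists by Lemma~\ref{lem:faceexist}. Theorem~\ref{thm:multiriver} gives a circle added on that face, touching $k\ge3$ circles, with gaps $g_1,\dots,g_k$ summing to $N$ and each at most $N-3$.

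By the final part of the proof of Theorem~\ref{thm:multiriver}, each reduced polygon is again a polygon of touching circles. It is either plain, or a polygon with rivers in the sense of Definition~\ref{def:multiring}, after two normalizations: rivers left with a single circle on one bank are collapsed, and fresh markers are inserted where needed. It has $g_j+2<N$ sides.

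The induction hypothesis therefore applies to each part. The plain parts are covered either by Corollary~\ref{cor:bound} or by the present statement with empty $\mathcal R$. One caveat: Definition~\ref{def:multiring} requires $\mathcal R$ to be nonempty, so I would state the induction over the union of both classes.

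The count is then the one in Corollary~\ref{cor:bound}. With $h$ the number of gaps equal to $1$, the total is at most
\[
1+\sum_j \max(0,g_j-2) \;=\; 1+N-2k+h .
\]
The bound $2k-h\ge5$ follows from the same three cases as before:
\begin{itemize}
\item if $h=k$, then $2k-h=k=N\ge5$;
\item if $k=3$, then $h\le1$, because $g_j\le N-3$;
\item if $k\ge4$ and $h\le k-1$, then $2k-h\ge k+1\ge5$.
\end{itemize}
Hence the total is at most $N-4$.

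The main obstacle is the closure step: checking that every reduced polygon is a legitimate instance of the hypothesis class. Collapsing a river changes radii and deletes rivers but keeps $N$. Inserting markers does not change $N$, because markers are not circles. So the side count $g_j+2$, which is all the induction uses, is unaffected. I would rely on the discussion following \eqref{eq:multiadd} and at the end of the proof of Theorem~\ref{thm:multiriver} for validity. I would only add the observation that collapsing preserves the number of circles, and that a part in which every river has been collapsed is a plain polygon, handled by Corollary~\ref{cor:bound}.
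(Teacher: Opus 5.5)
Your proposal is correct and is essentially the paper's proof, which reruns the induction of Corollary~\ref{cor:bound} with Theorem~\ref{thm:multiriver} (for any face of the inherited river family) in place of Theorem~\ref{thm:main} on the parts crossed by rivers. Your extra bookkeeping makes explicit what the paper's one-line proof leaves implicit: you run the induction over plain polygons and polygons with rivers together, since Definition~\ref{def:multiring} requires $\mathcal R$ to be nonempty, and you note that collapsing rivers and inserting markers leave the circle count unchanged.
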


\begin{proof}
	The induction of Corollary~\ref{cor:bound} applies verbatim, using Theorem~\ref{thm:multiriver} (with any face of the inherited river family) in place of Theorem~\ref{thm:main} for the parts crossed by rivers.
\end{proof}

\begin{remark}
	Theorem~\ref{thm:multiriver} generalizes the freedom of choice of Theorem~\ref{thm:river}. The designer may place the added flap in any region of the design visible within the polygon---in a leaf group of flaps, between rivers, or even in a region carrying no flap at all, such as the central region enclosed by three rivers whose far banks are pairwise disjoint---one appears in Figure~\ref{fig:multiriver}---in which case every touch of the added circle is across a river.
\end{remark}


\section{Concluding remarks}
\label{sec:conclusion}

We have shown that in any axial polygon of a circle-packed design with more than four sides, a circle can always be added so that every reduced polygon is strictly smaller, and consequently that at most $N-4$ added circles reduce an $N$-gon entirely into triangles and quadrilaterals---for which classical geometric molecule constructions exist. The proof identifies the candidate circles with the internal vertices of the medial axis of the free region, a tree whose combinatorics alone guarantee the existence of a good vertex; the same skeleton, computed as an Apollonius diagram, serves as a practical algorithm for finding the added circle.

Several questions remain open.
\begin{enumerate}
	\item \emph{Minimum number of additions.} The bound of $N-4$ added circles is sharp in the worst case (Corollary~\ref{cor:tight}), but an individual configuration may admit a much shorter subdivision---a single circle touching many others can reduce an $N$-gon to triangles at one stroke. Our algorithm only guarantees a decrease of the order at each step and does not compute the minimum; characterizing it, or computing it efficiently, is open.
	\item \emph{Other objectives.} From the design point of view, each added circle is an extra flap, so one may prefer to minimize quantities other than their number---the total radius of the added circles (the total length of extra material), the largest added radius, or a subdivision using only radii from a prescribed range. Nothing is known about these constrained versions, including whether they are computationally tractable.
	\item \emph{Quality of the molecules.} The interplay between the choice of the added circle and the shape of the resulting molecules---for instance, the aspect ratios of the triangles and quadrilaterals produced---is a natural direction for further study, as thin polygons yield crease patterns that are difficult to fold in practice.
\end{enumerate}
Beyond these, the staged-shrinking technique of Lemmas~\ref{lem:shrink} and \ref{lem:multishrink} may be of independent interest for further extensions, for instance to metrics other than the Euclidean one or to more general weighted Voronoi settings.


\section*{Acknowledgement}

The author extends gratitude to Robert J. Lang for inspiring this work and engaging in fruitful conversations throughout the journey.


\bibliographystyle{osmebibstyle}
\bibliography{osmerefs}

\theaffiliations

\end{document}